\renewcommand{\tocsection}[3]{%
	\indentlabel{\@ifnotempty{#2}{\bfseries\ignorespaces#1 #2\quad}}\bfseries#3}
\renewcommand{\tocsubsection}[3]{%
	\indentlabel{\@ifnotempty{#2}{\ignorespaces#1 #2\quad}}#3}
\newcommand\@dotsep{4.5}
\def\@tocline#1#2#3#4#5#6#7{\relax
	\ifnum #1>\c@tocdepth 
	\else
	\par \addpenalty\@secpenalty\addvspace{#2}%
	\begingroup \hyphenpenalty\@M
	\@ifempty{#4}{%
		\@tempdima\csname r@tocindent\number#1\endcsname\relax
	}{%
		\@tempdima#4\relax
	}%
	\parindent\z@ \leftskip#3\relax \advance\leftskip\@tempdima\relax
	\rightskip\@pnumwidth plus1em \parfillskip-\@pnumwidth
	#5\leavevmode\hskip-\@tempdima{#6}\nobreak
	\leaders\hbox{$\m@th\mkern \@dotsep mu\hbox{.}\mkern \@dotsep mu$}\hfill
	\nobreak
	\hbox to\@pnumwidth{\@tocpagenum{\ifnum#1=1\bfseries\fi#7}}\par
	\nobreak
	\endgroup
	\fi}
\renewcommand\csname r@tocindent0\endcsname{0pt}
\def\l@subsection{\@tocline{2}{0pt}{2.5pc}{5pc}{}}
\def\l@subsubsection{\@tocline{2}{0pt}{4.5pc}{5pc}{}}
\def\l@paragraph{\@tocline{2}{0pt}{5.5pc}{5pc}{}}
\def\rr{\mathbb R}
\def\HH{\mathcal H}
\def\ZZ{\mathcal Z}
\def\AA{\mathcal A}
\def\OO{\mathcal O}
\def\LL{\mathcal L}
\def\OO{\mathcal O}
\def\NN{\mathcal N}
\def\prl{\parallel }
\def\prl{\parallel }
\newcommand{\set}[1]{\{#1\}}
\providecommand{\abs}[1]{\lvert#1\rvert}
\providecommand{\norm}[1]{\lVert#1\rVert}
\newtheorem{theorem}{Theorem}[section]
\newtheorem{lemma}[theorem]{Lemma}
\theoremstyle{definition}
\newtheorem{defi}{Definition}
\theoremstyle{remark}
\newtheorem{remark}{Remark}
\numberwithin{equation}{section}
\numberwithin{equation}{section}
\begin{document}
\thispagestyle{empty}

\title[Controllability and stabilization of coupled wave equations]{Exact controllability and stabilization of locally \\coupled wave equations : theoretical results}
\author{St\'ephane Gerbi}
\address{Laboratoire de Math\'ematiques UMR 5127 CNRS \& Universit\'e de Savoie Mont Blanc, Campus scientifique, 73376 Le Bourget du Lac Cedex, France}\email{stephane.gerbi@univ-smb.fr}
\author{Chiraz Kassem}
\address{Universit\'e Libanaise,
			EDST, Equipe EDP-AN,
			Hadath, Beirut, Lebanon}
			\email{shiraz.kassem@hotmail.com}
			\author{Amina Mortada}
			\address{ Universit\'e Libanaise,
			EDST, Equipe EDP-AN,
			Hadath, Beirut, Lebanon}
			\email{amina$\_$mortada2010@hotmail.com}
			\author{Ali Wehbe}
			\address{Universit\'e Libanaise,
			Facult\'e des Sciences 1,
			EDST, Equipe EDP-AN,
			Hadath, Beirut, Lebanon}
				\email{ali.wehbe@ul.edu.lb}
	 
	\date{}
	
	\begin{abstract}
	In this paper, we study the  exact controllability and stabilization of a system of two wave equations coupled by velocities with an internal, local control acting on only one equation. We distinguish two cases. In the first one, when the waves propagate at the same speed: using a frequency domain approach combined with multiplier technique, we prove that the system is exponentially stable when the coupling region  is a subset of the damping region and satisfies the geometric control condition GCC (see Definition \ref{def1} below). Following a result of Haraux (\cite{Haraux1989}), we establish the main indirect observability inequality. This results leads, by the HUM method, to prove that the total system is exactly controllable by means of locally distributed control. In the second case, when the waves propagate at different speed, we establish an exponential decay rate in the weak energy space under appropriate geometric conditions. Consequently, the system is exactly controllable using a result of \cite{Haraux1989}.    	
	\end{abstract}
	
	\subjclass[2010]{35L10, 35B40, 93D15, 90D20}
	\keywords{Coupled wave equations, internal damping, exact controllability}
	
 \maketitle
\tableofcontents
\section{Introduction} \label{S1l}
\subsection{Motivation and aims}
Let $\Omega$ be a nonempty connected open subset of $\rr^N$ having a boundary $\Gamma$ of class $C^2$. In  \cite{Alabau2017}, F. Alabau et al. considered the energy decay of a system of two wave equations coupled by velocities
 \begin{eqnarray}
 u_{tt} - a\Delta u + \rho(x,u_t) + b(x)y_t &=&  0  \hskip 1.4 cm \mbox{in} \,\,\, \Omega \times \rr_+^{*},\label{e111.2}\\
 y_{tt}- \Delta y - b(x)u_t &=&  0  \hskip 1.4 cm \mbox{in}\,\,\, \Omega \times \rr_+^{*},\label{e122.1}\\
 u = y &=& 0 \hskip 1.4 cm \mbox{on} \,\, \,  \Gamma\times \rr_+^{*},\label{e133.2}
 \end{eqnarray}
 where $a>0$ constant, $b\in C^0(\overline \Omega, \mathbb{R})$ and $\rho(x,u_t)$ is a non linear damping.
  Using an approach based on multiplier techniques, weighted nonlinear inequalities and the optimal-weight convexity method (developed in \cite{AlabauConvexity2005}), the authors established an explicit energy decay formula in terms of the behavior of the nonlinear feedback close to the origin. Their results are obtained in the case when the following three conditions are satisfied: the waves propagate at the same speed ($a=1$),  the coupling coefficient $b(x)$ is small and positive ($0\leq b(x)\leq b_0$, $b_0\in (0,b^\star]$ where $b^\star$ is a constant depending on $\Omega$ and on the control region) and both the coupling and the damping regions satisfying an appropriated  geometric conditions named Piecewise Multipliers Geometric Conditions (introduced in \cite{Liu1997}, recalled below in Definition \ref{def2} and denoted by PMGC in short). In their work, the case where the waves are not assumed to be propagated with equal speeds ($a$ is not necessarily equal to 1) and/or the coupling coefficient $b(x)$ is not assumed to be positive and small has been left as an open problem even when the damping term $\rho$ is linear with respect to the second variable. Recently, C. Kassem et al. in \cite{Wehbe-Amina-Chiraz}, answered this important open question by studying the stabilization of the following linear system:    
 \begin{eqnarray}
 u_{tt}-a\Delta u+ c(x) u_t + b(x)y_t &=& 0 \hskip 1.4 cm \mbox{in} \,\,\, \Omega \times \rr_+^{*},\label{e11.2}\\
 y_{tt}- \Delta y - b(x)u_t &=&  0  \hskip 1.4 cm \mbox{in}\,\,\, \Omega \times \rr_+^{*},\label{e12.2}\\
 u = y &=& 0 \hskip 1.4 cm \mbox{on} \,\, \,  \Gamma\times \rr_+^{*},\label{e13.2}
 \end{eqnarray}
in the case where the waves propagate with equal or different speeds and the coupling coefficient is not assumed to be positive and small. Indeed, they distinguished two cases. The first one is when the waves propagate at the same speed (i.e. $a=1$), but unlike the works of \cite{Alabau2017}, the coupling coefficient function $b$ is not necessarily assumed to be positive and small. In this case, under the condition that the coupling region and the damping region have non empty intersection satisfying the PMGC conditions, they established an exponential energy decay rate for weak initial data. On the contrary (i.e. $a\not=1$ ) they first proved the lack of the exponential stability of the system. However, under the same geometric condition, an optimal energy decay rate of type $\frac{1}{t}$ was established for smooth initial data. Notice that, the PMGC conditions is much more restrictive than the Geometric Control Condition (introduced in \cite{Bardos-Lebeau-Rauch}, recalled below in Definition \ref{def1} and, denoted by GCC in short). The natural question is then whether or not stabilization and the exact controllability still hold in the case where the coupling region and the damping region have non empty intersection satisfying the GCC condition?\\
The aim of this paper is to investigate the exact controllability of the following system:
\begin{eqnarray}
u_{tt}-a\Delta u + b(x)y_t &=&  c(x)  v_t \hskip 0.7 cm \mbox{in} \,\,\, \Omega \times \rr_+^{*},\label{e11.2c}\\
y_{tt}- \Delta y - b(x)u_t &=&  0  \hskip 1.4 cm \mbox{in}\,\,\, \Omega \times \rr_+^{*},\label{e12.2c}\\
u = y &=& 0 \hskip 1.4 cm \mbox{on} \,\, \,  \Gamma\times \rr_+^{*},\label{e13.2c}
\end{eqnarray}
with the following initial data
\begin{equation}
u(x,0)=u_0, \, \, \, y(x,0)=y_0,  \, \, \, u_t(x,0)=u_1 \, \, {\rm and} \, \, y_t(x,0)=y_1, \quad x\in \Omega,
\end{equation}
under appropriate geometric conditions. Here, $a>0$ constant, $b\in C^0(\overline \Omega, \mathbb{R})$, $c \in C^0(\overline \Omega, \mathbb{R}^+)$ and $v$ is an appropriate control. 
The idea is to use a result of A. Haraux in \cite{Haraux1989} for which the observability of the homogeneous system associated to  \eqref{e11.2c}-\eqref{e13.2c} is equivalent to the exponential stability of system \eqref{e11.2}-\eqref{e13.2} in an appropriate Hilbert space. So, we provide a complete analysis for the exponential stability of system \eqref{e11.2}-\eqref{e13.2} in different Hilbert spaces. First, when the waves propagate at the same speed (i.e., $a=1$), under the condition that the coupling region is included in the damping region and satisfies the so-called Geometric Control Condition (GCC in Short), we establish the exponential stability of system \eqref{e11.2}-\eqref{e13.2}. Consequently, an observability inequality of the solution of the homogeneous system associated to \eqref{e11.2c}-\eqref{e13.2c} in the space $\left(H^1_0(\Omega)\times L^2(\Omega)\right)^2$ is established. This leads, by the HUM method introduced by Lions in \cite{Lions88}, to the exact controllability of system \eqref{e11.2c}-\eqref{e13.2c} in the space $\left(H^{-1}(\Omega)\times L^2(\Omega)\right)^2$. Noting that, the geometric situations covered here are richer than those considered in \cite{Alabau2017} and \cite{Wehbe-Amina-Chiraz}. Furthermore, on the contrary when the waves propagate at different speeds, (i.e., $a \neq 1$), we establish the exponential stability of system \eqref{e11.2}-\eqref{e13.2} in the space $H^{1}_0(\Omega)\times L^2(\Omega)\times L^2(\Omega)\times H^{-1}(\Omega)$ provided that the damping region satisfies the PMGC condition while the coupling region includes in the damping region and satisfying the GCC conditions. Consequently, an observability inequality of the solution of the homogeneous system associated to \eqref{e11.2c}-\eqref{e13.2c} is established.  This leads, by the HUM method, to the exact controllability of system \eqref{e11.2c}-\eqref{e13.2c} in the space $L^2(\Omega)\times H^{-1}(\Omega)\times H^{1}_0(\Omega)\times L^2(\Omega)$.

\subsection{Literature}
Since the work of J. L. Lions in \cite{Lions88}, the observability and controllability of coupled wave equations have been studied by an intensive number of publications. In \cite{Lions88}, J. L. Lions studied the complete and partial observability and controllability of coupled systems of either hyperbolic-hyperbolic type or hyperbolic-parabolic type. These results assume that the coupling parameter is sufficiently small. In \cite{Alabau01} and \cite{Alabau2003}, F. Alabau studied the indirect boundary observability of an abstract system of two weakly coupled second order evolution equations where the coupling coefficient is strictly positive in the whole domain. In particular, using a piecewise multiplier method, she proved that, for a sufficiently large time $T$, the observation of the trace of the normal derivative of the first component of the solution on a part of the boundary allows us to get back a weakened energy of the initial data. Consequently, using Hilbert Uniqueness Method, she proved that the system is exactly controllable for small coupling parameter by means of one boundary control. Noting that, the situation where the waves propagate with different speeds is not covered. Later, the indirect boundary controllability of a system of two weakly coupled one-dimensional wave equations has been studied by Z. Liu and B. Rao in \cite{LiuRao2009}. Using the non harmonic analysis, they established several weak observability inequalities which depend on the ratio of the wave propagation speeds and proved the indirect exact controllability. The null controllability of the reaction diffusion System has been studied by F. Ammar-Khodja et al. in \cite{Ammarkhodja}, by deriving an observability estimate for the linearized problem. The exact controllability of a system of weakly coupled wave equations with an internal locally control acted on only one equation has been studied by A. Wehbe and W. Youssef in \cite{Wehbe-Youssef-2010} and \cite{Wehbe-Youssef-2011}. They showed that, for sufficiently large time, the observation of the velocity of the first component of the solution on a neighborhood of a part of the boundary allows us to get back a weakened energy of initial data of the second component, this if the coupling parameter is sufficiently small, but non- vanishing and by the HUM method, they proved that the total system is exactly controllable. F. Alabau and M. L\'eautaud in \cite{AlabauLeautaudStabi:2013}, considered a symmetric systems of two wave-type equation, where only one of them being controlled. The two equations are coupled by zero order terms, localized in part of the domain. They obtained an internal and a boundary controllability result in any space dimension, provided that both the coupling and the control regions satisfy the Geometric Control Condition. 
\\
\subsection{Description of the paper}
This paper is organized as follows: In section \eqref{section2}, first, we show that the system \eqref{e11.2}-\eqref{e13.2} can be reformulated into a first order evolution equation and we deduce the well posedness property of the problem by the semigroup approach. Second, by using Theorem $2.2$ of \cite{Wehbe-Amina-Chiraz}, we show that our problem is strongly stable without geometric conditions. In section \ref{section3}, we show the exponential decay rate of system \eqref{e11.2}-\eqref{e13.2} when the coupling region $b$ is a subset of the damping region $c$ and satisfies the geometric control condition GCC. After that, we show that our system is exactly controllable by using Proposition $2$ of A. Haraux in \cite{Haraux1989}. In section \ref{section4}, we show the exponential decay rate of system \eqref{e11.2}-\eqref{e13.2} in the weak energy space provided that the damping region satisfies the PMGC condition while the coupling region is a subset of the damping region and satisfies the GCC condition. 
\section{Well posedeness and strong stability}\label{section2}
Let us define the energy space $\HH = \bigg(H_0^1(\Omega) \times L^2(\Omega)\bigg)^2$ equipped with the following inner product and norm, respectively : for all $U=(u,v,y,z), \, \widetilde{U}=(\widetilde{u},\widetilde{v},\widetilde{y},\widetilde{z})\in \HH ,$
\begin{equation*}
(U, \,\, \widetilde{U})_\HH\,=
\,\displaystyle\,a\int_{\Omega}(\nabla u\cdot\nabla
\widetilde{u})dx\,+\,\int_{\Omega}v\widetilde{v}\,dx+\int_{\Omega}(\nabla y\cdot\nabla
\widetilde{y})dx\,+\int_{\Omega}z\widetilde{z}dx, \, \left\| U\right\|_{\mathcal{H}} = \sqrt{(U,U)_{\mathcal{H}}},
\end{equation*}

Let $(u,u_t,y,y_t)$ be a regular solution of the system \eqref{e11.2}-\eqref{e13.2}. Its associated energy is defined by 
\begin{equation*}
E(t)=\frac{1}{2} \int_\Omega \left(\abs{u_{t}}^2+a\abs{\nabla u}^2+\abs{y_{t}}^2+\abs{\nabla y}^2\right) dx. \label{energy}
\end{equation*}
A straightforward computations gives  
$E'(t) = \displaystyle- \int_{\Omega}c(x) |u_t|^2dx \leq 0.$
Consequently, system \eqref{e11.2}-\eqref{e13.2} is dissipative in the sense that its energy is non-increasing with respect to $t$.
Setting $U=(u,u_t,y,y_t)$,  system \eqref{e11.2}-\eqref{e13.2} may be recast as: 
$$
U_t= \AA U, \; \hbox{in} \; (0,+\infty), U(0)=(u_0,u_1,y_0,y_1),$$
 where the unbounded operator $\AA : D(\AA) \subset \HH \rightarrow \HH$ is given by:
\begin{align}
D(\AA)&=\bigg((H^2(\Omega) \cap H_0^1(\Omega)) \times
H_0^1(\Omega)\bigg)^2 \mbox{ and }\label{Da}\\
\AA U&=(\,v , a \Delta u - b z- c v, \,z,\Delta y+b v\,),
\displaystyle \hskip 1 cm \forall\, \,  U\,=\,(u ,v, y , z)\,
\in\,D(\AA).\label{A}
\end{align}
{Note that due to the fact that $c(x) \geq 0$, the operator $\AA$ is dissipative in $\HH$. And, by applying the Lax-Milgam Theroem, it is easy to prove that the operator $\AA$ is maximal in $\HH$ i.e.  $R( I -\AA)=\HH$. Consequently, it  generates a $C_0$-semigroup of contractions $(e^{t\AA})_{t \geq 0}$.} So, system \eqref{e11.2}-\eqref{e13.2} is wellposed in $\HH$.\\ 
We need now to study the asymptotic behavior of $E(t)$. For this aim, we suppose that there exists a non empty open $\omega_{c_+}\subset \Omega$ satisfying the following condition 
$$  \set{x\in \Omega :c(x)  > 0}\supset\overline{\omega}_{c_+}. \hskip 2 cm {\rm (LH1)}$$
On the other hand, 
as $b(x)$ is not identically null and continuous, then there exists a non empty open 
$\omega_{b}\subset \Omega$ such that 
$$ \set{x\in \Omega :b(x)  \neq 0}\supset\overline{\omega}_{b} .\hskip 1. cm \mbox{(LH2)}$$
If $\omega=\omega_{c_+}\cap\omega_{b} \neq \emptyset$  and condition (LH1) holds, then system \eqref{e11.2}-\eqref{e13.2} is strongly stable using Theorem 2.2 in \cite{Wehbe-Amina-Chiraz}, i.e.
$$\displaystyle \lim_{t \rightarrow +\infty}\norm{e^{t\AA}(u_0,u_1,y_0,y_1)}_\HH = 0 \quad \forall (u_0,u_1,y_0,y_1) \in \HH. $$
\section{Exponential stability and exact controllability in the case $a=1$}\label{section3}
\subsection{Exponential stability}
This subsection is devoted to study the exponential stability of system \eqref{e11.2}-\eqref{e13.2} in the case when the waves propagate at the same speed, i.e., $a=1$ under an appropriate geometric conditions. 
Before we state our results, we recall the Geometric Control Conditions GCC introduced by Rauch and Taylor in \cite{Rauch1974} for manifolds without boundaries and by Bardos, Lebeau and Rauch in \cite{Bardos-Lebeau-Rauch} for domains with boundaries and the Piecewise Multipliers Geometric Condition introduced by K. Liu in \cite{Liu1997}. 
\begin{defi}
We say that a subset $\omega$ of $\Omega$ satisfies the $\textbf{GCC}$ if every ray of the geometrical optics starting at any point $x \in \Omega$ at $t=0$ enters the region $\omega$ in finite time $T.$ \label{def1}
\end{defi}

\begin{defi}
We say that $\omega$ satisfies the Piecewise Multipliers Geometric Condition (PMGC in short) if there exist $\Omega_j\subset \Omega$ having Lipschitz boundary $\Gamma_j=\partial \Omega_j$ and $x_j\in \mathbb{R}^N$, $j=1,...,J$
such that $\Omega_j\cap \Omega_i= \emptyset$ for $j\not=i$ and  
$\omega$ contains a neighborhood in $\Omega$ of the set $\displaystyle\cup _{j=1}^{J} \gamma_j\left(x_j\right) \cup \left( \Omega \setminus \displaystyle \cup _{j=1}^{J} \Omega_j \right)$ where 
$\gamma_j(x_j) = \{ x \in  \Gamma_j:(x-x_j)\cdot \nu_j(x) > 0 \}$ and $\nu_j$ is the outward unit normal vector to $ \Gamma_j$. \label{def2}
\end{defi}
\begin{remark}
The PMGC is the generalization of the Multipliers Geometric Condition (MGC in short) introduced by Lions in \cite{Lions88}, saying that $\omega$ contains a neighborhood in $\Omega$ of the set
$ \{ x \in  \Gamma :(x-x_0)\cdot \nu(x) > 0 \}$, for some $x_0\in \mathbb{R}^N$, where $\nu$ is the outward unit normal vector to $ \Gamma =\partial \Omega$.
\end{remark}
Now, we are in position to state our first main result by the following theorem : 
\begin{theorem}\label{ThexpE.2} (Exponential decay rate)
Let $a=1$. Assume that conditions {\rm (LH1)} and {\rm (LH2)} hold. Assume also that $\omega_{b}\subset\omega_{c_+}$ satisfies the  geometric control conditions  {\rm GCC} and that $b,c\in W^{1,\infty}(\Omega)$. Then there exist positive constants $M\geq 1$, $\theta >0$
such that for all initial data $(u_0,u_1,y_0,y_1)\in \HH$ the energy
of the system (\ref{e11.2})-(\ref{e13.2}) satisfies the following decay rate:
\begin{equation}
E(t)\leq Me^{-\theta t}E(0),\ \ \ \ \forall t>0.\label{ExpSta1.2}
\end{equation}
\end{theorem}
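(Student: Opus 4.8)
The plan is to use a frequency-domain (resolvent) criterion for exponential stability of the contraction semigroup $(e^{t\AA})_{t\geq 0}$: by the Huang--Prüss theorem, since $\AA$ generates a $C_0$-semigroup of contractions and (as noted in Section~\ref{section2}) $i\rr\subset\rho(\AA)$ — a consequence of strong stability together with the compactness of the resolvent — it suffices to prove
\begin{equation*}
\sup_{\lambda\in\rr}\ \big\|(i\lambda I-\AA)^{-1}\big\|_{\LL(\HH)}<\infty.
\end{equation*}
I would argue by contradiction: suppose the resolvent is not uniformly bounded on the imaginary axis. Then there exist a sequence $\lambda_n\in\rr$ (WLOG $\lambda_n\to+\infty$, the low-frequency range being handled by $i\rr\subset\rho(\AA)$ and continuity) and $U_n=(u_n,v_n,y_n,z_n)\in D(\AA)$ with $\|U_n\|_\HH=1$ such that
\begin{equation*}
(i\lambda_n I-\AA)U_n=:F_n=(f^1_n,f^2_n,f^3_n,f^4_n)\longrightarrow 0\ \ \text{in}\ \HH.
\end{equation*}
Writing this out componentwise gives $i\lambda_n u_n-v_n=f^1_n$, $i\lambda_n v_n-\Delta u_n+bz_n+cv_n=f^2_n$, $i\lambda_n y_n-z_n=f^3_n$, $i\lambda_n z_n-\Delta y_n-bv_n=f^4_n$. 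Taking the real part of $(\AA U_n,U_n)_\HH$ and using $E'=-\int c|u_t|^2$ yields the first a priori estimate
\begin{equation*}
\int_\Omega c(x)|v_n|^2\,dx=\mathrm{Re}\,(F_n,U_n)_\HH\longrightarrow 0,
\end{equation*}
so in particular $\sqrt{c}\,v_n\to 0$ in $L^2(\Omega)$, hence $v_n\to 0$ in $L^2(\omega_{c_+})$.

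The core of the argument is then a chain of localized energy estimates propagating this damping information to the full energy. First, using a cut-off function supported near $\omega_{c_+}$ and the first two equations, I would show $\nabla u_n\to 0$ in $L^2$ on a slightly smaller neighborhood — here the hypothesis $b,c\in W^{1,\infty}$ enters, as does multiplying the second equation by $\chi\,\overline{u_n}$ and integrating by parts. Next, since $\omega_b\subset\omega_{c_+}$, the damping region already contains the coupling region, so on $\omega_b$ both $v_n\to 0$ and $\nabla u_n\to 0$; from the first equation $u_n=(v_n+f^1_n)/(i\lambda_n)$, so $u_n\to 0$ there too, and then the fourth equation, tested against a cut-off times $\overline{z_n}$ (or against $\overline{y_n}$), lets the coupling term $b v_n$ transfer smallness of $v_n$ into a localized bound on $y_n,z_n$ near $\omega_b$. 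The decisive step is to invoke the GCC: with $\omega:=\omega_b$ satisfying the geometric control condition, the standard observability/propagation-of-smallness estimate for the wave operator (in the semiclassical form, applied to the sequence $y_n$ solving $(-\Delta-\lambda_n^2)y_n=\text{l.o.t.}+bv_n+\dots$) upgrades the local bound on $(y_n,z_n)$ near $\omega_b$ to $\|\nabla y_n\|_{L^2(\Omega)}^2+\|z_n\|_{L^2(\Omega)}^2\to 0$ on all of $\Omega$. Once $y_n\to 0$ and $z_n\to 0$ globally, the coupling term $-bv_n$ in the fourth equation is no longer needed, and running the first pair of equations again — now the $u$-equation is essentially a damped wave equation $i\lambda_n v_n-\Delta u_n+cv_n\to 0$ with the damping $c$ again satisfying GCC (since $\omega_{c_+}\supset\omega_b$ satisfies GCC) — the same observability estimate gives $\|\nabla u_n\|^2+\|v_n\|^2\to 0$. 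Altogether $\|U_n\|_\HH\to 0$, contradicting $\|U_n\|_\HH=1$.

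The main obstacle is the GCC-based propagation step: unlike the PMGC case of \cite{Wehbe-Amina-Chiraz}, where explicit piecewise-multiplier identities suffice, under GCC one must rely on microlocal defect measures (or the equivalent high-frequency observability inequality of Bardos--Lebeau--Rauch) to conclude that vanishing of the damped/observed quantity on $\omega$ forces vanishing of the whole solution. Making this rigorous for the \emph{coupled} sequence requires care: one first shows the microlocal defect measure of $(u_n)$ is supported away from $\omega$, propagates along the bicharacteristic flow, and hence is zero by GCC; then the transferred smallness handles $(y_n)$ similarly. A secondary technical point is bookkeeping the powers of $\lambda_n$ when passing between $u_n$ and $v_n$ (and $y_n$ and $z_n$) via the first and third equations, and controlling cross terms $\int b\,v_n\overline{z_n}$, $\int b\,u_n\overline{v_n}$ using the $W^{1,\infty}$ regularity of $b$. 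Finally, the exact controllability conclusion is \emph{not} part of this theorem's statement — it follows afterwards from the Haraux equivalence \cite{Haraux1989} and HUM — so here I would stop once the uniform resolvent bound, hence \eqref{ExpSta1.2}, is established.
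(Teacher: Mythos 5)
Your overall frame (Huang--Pr\"uss criterion, contradiction sequence $\beta_n\to\infty$, $\|U_n\|_\HH=1$, first dissipation estimate $\int_\Omega c|v_n|^2\,dx=o(1)$, then localized estimates on $u_n$ near $\omega_{c_+}$) coincides with the paper's. But there is a genuine gap at the decisive coupling step: your proposed mechanism for transferring smallness from the damped component $u_n$ to the undamped component $y_n$ on $\omega_b$ does not work. Testing the fourth equation against a cut-off times $\overline{y_n}$ (or $\overline{z_n}$) produces the term $\int \chi\, b\, v_n\overline{y_n}\,dx$, which is $o(1)$ for trivial reasons (indeed $\|y_n\|_{L^2}=O(1/\beta_n)$ already), and the remaining identity only relates $\int\chi|\beta_n y_n|^2$ to $\int\chi|\nabla y_n|^2$ without making either small. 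No information at the energy level $|\nabla y_n|^2+|\beta_n y_n|^2$ on $\omega_b$ is obtained this way. The paper's transfer is different and is the heart of the proof: one multiplies the reduced equation for $u_n$ by $\frac{1}{\beta_n}\Delta\overline{y}_n$ and the reduced equation for $y_n$ by $\frac{1}{\beta_n}\Delta\overline{u}_n$, combines the two identities and takes imaginary parts; the cross terms $-\beta_n\int\nabla u_n\cdot\nabla\overline{y}_n$ and $\frac{1}{\beta_n}\int\Delta u_n\Delta\overline{y}_n$ cancel against their conjugates precisely because the two principal parts coincide, yielding $\int_\Omega b|\nabla y_n|^2\,dx=\int_\Omega b|\nabla u_n|^2\,dx+o(1)$, and then $\int_{\omega_b}|\beta_n y_n|^2\,dx=o(1)$ by a further multiplication by $b\overline{y}_n$. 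This is exactly where the hypothesis $a=1$ enters; your sketch never uses it, which signals that the key step is absent.

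A secondary divergence, not by itself fatal but left unexecuted, is the globalization step. You propose to invoke GCC through microlocal defect measures / semiclassical observability applied directly to the coupled sequence; the paper instead introduces an auxiliary stationary problem $\beta_n^2\phi_n+\Delta\phi_n-ib\beta_n\phi_n=f_n$, whose uniform a priori bound $\|\beta_n\phi_n\|^2+\|\nabla\phi_n\|^2\le C\|f_n\|^2$ follows from the known exponential stability (Bardos--Lebeau--Rauch, since $\omega_b$ satisfies GCC) of the scalar wave equation damped on $\omega_b$, again via Huang--Pr\"uss. Taking $f_n=u_n$ and $f_n=y_n$ and using $\beta_n^2\overline{\phi}_n$ as a multiplier converts the already-established local estimates on $\omega_b$ into the global ones $\int_\Omega|\beta_n u_n|^2=o(1)$ and $\int_\Omega|\beta_n y_n|^2=o(1)$, after which $\|\nabla u_n\|$ and $\|\nabla y_n\|$ follow by multiplying by $\overline{u}_n$ and $\overline{y}_n$. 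Your microlocal route could in principle replace this, but as written it both remains a sketch and rests on the local $y_n$-estimates that your argument does not actually supply.
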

\begin{remark}
The geometric situations covered by Theorem \ref{ThexpE.2} are richer than those considered in \cite{Wehbe-Amina-Chiraz} and \cite{Alabau2017}. Indeed, in the previous references, the authors consider the PMGC geometric conditions that are more restrictive than GCC. On the other hand, unlike the results in \cite{Alabau2017}, we have no restriction in Theorem \ref{ThexpE.2} on the upper bound and the sign of the coupling function coefficient $b$. This theorem is then a generalization in the linear case of the result of  \cite{Alabau2017} where the coupling coefficient considered have to satisfy $0\leq b(x)\leq b_0$, $b_0\in (0,b^\star]$ where $b^\star$ is a constant depending on $\Omega$ and on the control region.
\end{remark}
In order to prove Theorem \ref{ThexpE.2}, we apply a result of Huang \cite{Huang-85} and Pr\"uss \cite{Pruss-84}.
A $C_0$- semigroup of contraction $(e^{t\AA})_{t\geqslant 0}$ in a Hilbert space $\HH$ is exponentially stable if and only if the two following hypotheses are fulfilled:
\begin{align}
i \rr
\subseteq \rho(\AA)       &\label{H1}\tag{H1}\\
\limsup_{\beta \, \in \rr, |\beta| \rightarrow + \infty}\parallel (i \beta I - \AA )^{-1} \parallel_{\mathcal{L}(\mathcal{H})} <
\infty &\label{H2}\tag{H2}
\end{align}
Since the resolvent of $\AA$ is compact and $0 \in \rho(\AA)$, then from the fact that our system is strongly stable, we deduce that condition \eqref{H1} is satisfied. We now prove that condition \eqref{H2} holds, using an argument of contradiction. For this aim, we suppose that there exist a  real sequence $\beta_n $ with $\beta_n \rightarrow +\infty$ and a sequence $U_n=(u_n,v_n,y_n,z_n) \in D(\AA)$ such that 	
\begin{align}
\parallel U_n \parallel_{\HH} &= 1 \quad \mbox{ and }\label{E1}\\
\lim_{n\rightarrow\infty}\parallel  (i \beta_n I -\AA) U_n\parallel_{\HH} &= 0.\label{E2}
\end{align}
Next, detailing equation (\ref{E2}), we get
\begin{eqnarray}
	i\beta_n u_n-v_n &=& f_n^1 \,\,\rightarrow \,0 \hskip 0.5 cm \mbox{in}\hskip 0.5 cm H_0^1(\Omega),\label{e46}\\
	i\beta_n v_n- \Delta u_n +b z_n + cv_n&=& g_n^1 \,\,\rightarrow \,0 \hskip 0.5 cm \mbox{in}\hskip 0.5 cm L^2(\Omega),\label{e47}\\
	i\beta_n y_n-z_n &=& f_n^2 \,\,\rightarrow \,0 \hskip 0.5 cm \mbox{in}\hskip 0.5 cm H_0^1(\Omega),\label{e48}\\
	i\beta_n z_n- \Delta y_n - b v_n  &=& g_n^2 \,\,\rightarrow
	\,0 \hskip 0.5 cm \mbox{in}\hskip 0.5 cm L^2(\Omega).\label{e49}
\end{eqnarray}
Eliminating $v_n$ and $z_n$ from the previous system, we obtain the following system
\begin{equation}
	\beta_n^2 u_n +\Delta
	u_n-i\beta_n b y_n-i\beta_n c u_n=-g_n^1-bf_n^2
	-i\beta_nf_n^1-cf_n^1, \label{E5}
\end{equation}
\begin{equation}
	\beta_n^2 y_n + \Delta y_n +i \beta_n b u_n = -i \beta_n
	f_n^2 + b f_n^1 -g_n^2. \label{E4}
\end{equation}
On the other side, we notice that $v_n$ and $z_n$ are uniformly
bounded in $L^2(\Omega)$. It follows, from  equations  (\ref{e46}) and (\ref{e48}), that 
\begin{equation}
	\displaystyle \int_{\Omega} |y_n|^2 dx =\frac{O(1)}{\beta_n^2}\ \ \ \textrm{and}\ \ \
	\int_{\Omega} |u_n|^2 dx =\frac{O(1)}{\beta_n^2}.\label{e411.3} 
\end{equation}
For clarity, we divide the proof into several Lemmas.
\begin{lemma}
	The solution $(u_n,v_n,y_n,z_n) \in D(\AA)$ of system (\ref{e46})-(\ref{e49}) satisfies the following estimates
	\begin{equation}
		\displaystyle\int_{\Omega} c |\beta_n u_n|^2dx=o(1) \quad and \quad \int_{\omega_{c_+}} |\beta_nu_n|^2 dx = o(1).\label{E33}
	\end{equation}
\end{lemma}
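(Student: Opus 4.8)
The plan is to extract the first dissipation estimate directly from the identity $E'(t) = -\int_\Omega c(x)|u_t|^2\,dx$, read at the spectral level. Taking the inner product of equation \eqref{E2} with $U_n$ in $\HH$ and using that $\AA$ is dissipative with $\mathrm{Re}(\AA U_n, U_n)_\HH = -\int_\Omega c(x)|v_n|^2\,dx$, we get
\begin{equation*}
\int_\Omega c(x)|v_n|^2\,dx = \mathrm{Re}\,(i\beta_n U_n - \AA U_n, U_n)_\HH = o(1),
\end{equation*}
since $\|U_n\|_\HH = 1$ by \eqref{E1} and $\|(i\beta_n I - \AA)U_n\|_\HH \to 0$ by \eqref{E2}. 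This is the natural starting point and should be essentially immediate.

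The remaining task is to convert $\int_\Omega c|v_n|^2\,dx = o(1)$ into $\int_\Omega c|\beta_n u_n|^2\,dx = o(1)$, i.e.\ to replace $v_n$ by $\beta_n u_n$. The bridge is equation \eqref{e46}: $i\beta_n u_n - v_n = f_n^1$ with $f_n^1 \to 0$ in $H_0^1(\Omega)$, hence in $L^2(\Omega)$. Writing $\beta_n u_n = -i v_n - i f_n^1$ and using $c \in W^{1,\infty}(\Omega) \subset L^\infty(\Omega)$, so that multiplication by $\sqrt{c}$ (or by $c$) is bounded on $L^2$, I would estimate
\begin{equation*}
\int_\Omega c|\beta_n u_n|^2\,dx \leq 2\int_\Omega c|v_n|^2\,dx + 2\int_\Omega c|f_n^1|^2\,dx \leq 2\int_\Omega c|v_n|^2\,dx + 2\|c\|_{L^\infty(\Omega)}\|f_n^1\|_{L^2(\Omega)}^2 = o(1),
\end{equation*}
which is the first claimed estimate. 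For the second, I would use hypothesis (LH1): on $\omega_{c_+}$ we have $\overline{\omega}_{c_+} \subset \{x : c(x) > 0\}$, and since $\overline{\omega}_{c_+}$ is compact and $c$ continuous, $c$ is bounded below by a positive constant $c_0 > 0$ on $\overline{\omega}_{c_+}$. Therefore
\begin{equation*}
\int_{\omega_{c_+}} |\beta_n u_n|^2\,dx \leq \frac{1}{c_0}\int_{\omega_{c_+}} c|\beta_n u_n|^2\,dx \leq \frac{1}{c_0}\int_\Omega c|\beta_n u_n|^2\,dx = o(1).
\end{equation*}

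I do not expect any real obstacle here; this lemma is the routine "first energy identity" step of a frequency-domain contradiction argument. The only points requiring a little care are: making sure the convergence $f_n^1 \to 0$ is used in the right norm ($L^2$ suffices, even though \eqref{e46} gives $H_0^1$), and invoking the uniform positive lower bound for $c$ on the compact set $\overline{\omega}_{c_+}$ rather than on $\omega_{c_+}$ itself. The genuinely hard estimates — propagating smallness from the damping region to the whole domain via GCC, and controlling the coupling terms — will come in the subsequent lemmas, not in this one.
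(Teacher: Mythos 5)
Your proposal is correct and follows essentially the same route as the paper: the dissipativity identity $\mathrm{Re}\,(i\beta_n U_n-\AA U_n,U_n)_{\HH}=\int_\Omega c|v_n|^2dx=o(1)$, the substitution $\beta_n u_n=-i(v_n+f_n^1)$ via \eqref{e46}, and the uniform positive lower bound of $c$ on $\overline{\omega}_{c_+}$ from {\rm(LH1)}. The only (immaterial) difference is the order in which the localization to $\omega_{c_+}$ is performed.
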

\begin{proof}
	First, since $U_n$ is uniformly bounded in $\HH$, then from \eqref{E2}, we get 
	\begin{equation}
	\mathrm{Re}\left\{i\beta_n \parallel U_n \parallel^2_\HH - (\AA U_n,
		U_n)_\HH \right\}= \int_{\Omega}c(x)|v_n|^2 dx=o(1).\label{E3}
	\end{equation}
 Under condition (LH1), it follows that
	\begin{equation}
		\int_{\omega_{c_+}}|v_n|^2 dx =o(1).
	\end{equation}
	Then, using equations \eqref{E3} and \eqref{e46},  we get
	\begin{equation}
		\int_{\Omega}c|\beta_n u_n|^2 dx =o(1).
	\end{equation}
Consequently, we have
	\begin{equation*}
		\int_{\omega_{c_+}}|\beta_n u_n|^2 dx =o(1). 
	\end{equation*}
	The proof is thus complete.
{$\quad\square$}\end{proof}
\begin{lemma}
	The solution $(u_n,v_n,y_n,z_n) \in D(\AA)$ of system (\ref{e46})-(\ref{e49})  satisfies the following estimates
	\begin{equation}
\displaystyle\int_{\Omega} c |\nabla u_n|^2 dx = o(1) \quad and \quad \int_{\omega_{c_+}} |\nabla u_n|^2 dx = o(1).\label{E31.2} 
\end{equation}	
\end{lemma}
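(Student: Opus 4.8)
The plan is to test the second-order equation \eqref{E5} for $u_n$ against the multiplier $c\,\bar u_n$. Since $c\in W^{1,\infty}(\Omega)$ and $u_n\in H^2(\Omega)\cap H^1_0(\Omega)$, the function $c\,\bar u_n$ belongs to $H^1_0(\Omega)$ and $\Delta u_n\in L^2(\Omega)$, so multiplying \eqref{E5} by $c\,\bar u_n$, integrating over $\Omega$ and applying Green's formula to the term $\int_\Omega c\,\bar u_n\Delta u_n$ (with no boundary contribution, as $u_n\in H^1_0(\Omega)$) yields the identity
\begin{equation*}
\begin{aligned}
\int_\Omega c\,|\nabla u_n|^2\,dx ={}& \int_\Omega c\,|\beta_n u_n|^2\,dx-\int_\Omega \bar u_n\,\nabla c\cdot\nabla u_n\,dx-i\beta_n\int_\Omega b\,c\,y_n\bar u_n\,dx\\
&-i\beta_n\int_\Omega c^2|u_n|^2\,dx+\int_\Omega \big(g_n^1+bf_n^2+i\beta_n f_n^1+cf_n^1\big)\,c\,\bar u_n\,dx .
\end{aligned}
\end{equation*}
It then suffices to prove that every term on the right-hand side is $o(1)$.

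The first term is exactly $o(1)$ by the preceding Lemma, see \eqref{E33}. For the remaining ones I would use the bounds already at hand: $\|\nabla u_n\|_{L^2(\Omega)}\le 1$ from \eqref{E1}, $\|u_n\|_{L^2(\Omega)}=O(\beta_n^{-1})$ and $\|y_n\|_{L^2(\Omega)}=O(\beta_n^{-1})$ from \eqref{e411.3}, $g_n^1\to 0$ in $L^2(\Omega)$, $f_n^1,f_n^2\to 0$ in $H^1_0(\Omega)$, together with $\|b\|_{L^\infty},\|c\|_{L^\infty},\|\nabla c\|_{L^\infty}<\infty$. Cauchy--Schwarz then gives: the $\nabla c$–term is $O(\|u_n\|_{L^2(\Omega)})=o(1)$; the coupling term is $\le\|bc\|_{L^\infty}\,\beta_n\|y_n\|_{L^2(\Omega)}\|u_n\|_{L^2(\Omega)}=O(\beta_n^{-1})$; the term $i\beta_n\int_\Omega c^2|u_n|^2$ is $\le \beta_n^{-1}\|c\|_{L^\infty}\int_\Omega c|\beta_n u_n|^2\,dx=o(1)$ by \eqref{E33} once more; and in the last integral each of the four contributions pairs a factor tending to $0$ against $\|c\,u_n\|_{L^2(\Omega)}=O(\beta_n^{-1})$ — for the $i\beta_n f_n^1$ piece one writes $\beta_n\|f_n^1\|_{L^2(\Omega)}\|u_n\|_{L^2(\Omega)}=\|f_n^1\|_{L^2(\Omega)}\,O(1)=o(1)$. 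Hence $\int_\Omega c\,|\nabla u_n|^2\,dx=o(1)$.

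The second estimate follows from the first: under {\rm (LH1)} the function $c$ is continuous and strictly positive on the compact set $\overline\omega_{c_+}$, hence bounded below there by some constant $c_0>0$, so
\[
\int_{\omega_{c_+}}|\nabla u_n|^2\,dx\ \le\ \frac{1}{c_0}\int_{\omega_{c_+}}c\,|\nabla u_n|^2\,dx\ \le\ \frac{1}{c_0}\int_\Omega c\,|\nabla u_n|^2\,dx\ =\ o(1).
\]

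I do not expect a genuine obstacle in this lemma: it is a standard localized–energy estimate. The two points that require some care are that $c\,\bar u_n$ is an admissible $H^1_0(\Omega)$ test function — which is exactly where the hypothesis $c\in W^{1,\infty}(\Omega)$ enters, namely through the term $\int_\Omega\bar u_n\,\nabla c\cdot\nabla u_n$ — and that the crossed coupling term $i\beta_n\int_\Omega bc\,y_n\bar u_n$ must be absorbed using the already established smallness of $y_n$ in \eqref{e411.3}, not the damping. The delicate step of the overall argument is deferred to later, when the information obtained on $\omega_{c_+}\supset\omega_b$ has to be propagated to all of $\Omega$ by invoking the {\rm GCC}.
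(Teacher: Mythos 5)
Your proof is correct and follows essentially the same route as the paper: both multiply \eqref{E5} by $c\,\bar u_n$, integrate by parts, estimate all lower-order terms by Cauchy--Schwarz using \eqref{e411.3} and the uniform bounds from \eqref{E1}, and conclude with \eqref{E33} and the lower bound $c\ge c_0>0$ on $\overline\omega_{c_+}$ guaranteed by (LH1).
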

\begin{proof}
Multiplying equation \eqref{E5} by $c \overline{u}_n$, integrating by parts and using the fact that $u_n=0$ on $\Gamma$, we get
\begin{align}\label{equation0.2}
	&\int_{\Omega}c|\beta_nu_n|^2dx -\int_{\Omega}c|\nabla u_n|^2dx-\int_{\Omega}(\nabla c \cdot \nabla u_n)\overline u_ndx -i\int_{\Omega} \beta_n b y_n c \overline u_ndx \nonumber\\
	& -i \int_{ \Omega}\beta_n c u_n \overline u_ndx= \int_{\Omega}(-g_n^1-bf_n^2
	-i\beta_nf_n^1-cf_n^1)c \overline u_n dx.
\end{align}
Using the fact that $f_n^1$, $f_n^2$ converge to zero in $H_0^1(\Omega)$, $g_n^1$ converges to zero in $L^2(\Omega)$ and $\beta_n \overline u_n$ is uniformly bounded in $L^2(\Omega)$, we obtain
\begin{equation}\label{equation1.2}
	\int_{\Omega}(-g_n^1-bf_n^2
	-i\beta_nf_n^1-cf_n^1)c \overline u_n dx=o(1).
\end{equation}
Using the fact that $\nabla u_n$, $\beta_ny_n$, $\beta_nu_n$ are uniformly bounded in $L^2(\Omega)$ and $\norm{u_n}=o(1)$, we get
\begin{equation}\label{equation2.2}
	-\int_{\Omega}(\nabla c \cdot \nabla u_n)\overline u_ndx -i\int_{\Omega} \beta_n b y_n c \overline u_ndx 
	-i \int_{ \Omega}\beta_n c u_n \overline u_ndx=o(1).
\end{equation}
Inserting \eqref{equation1.2} and \eqref{equation2.2} into \eqref{equation0.2}, we get 
\begin{equation} \label{equation3.2}
	\int_{\Omega}c|\beta_nu_n|^2dx -\int_{\Omega}c|\nabla u_n|^2dx=o(1).
\end{equation}
Finally, using estimation \eqref{E33} in \eqref{equation3.2}, we deduce 
\begin{equation*}
	\int_{\omega_{c_+}} |\nabla u_n|^2 dx = o(1).
\end{equation*}
The proof is thus complete.
{$\quad\square$}\end{proof}
\begin{lemma}
The solution $(u_n,v_n,y_n,z_n) \in D(\AA)$ of system (\ref{e46})-(\ref{e49})  satisfies the following estimate
\begin{equation}\label{nablay0.2}
\displaystyle \int_{\omega_{b}} |\nabla y_n|^2 dx = o(1).
\end{equation}	
\end{lemma}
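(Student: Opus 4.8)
The strategy is to localize near $\omega_b$ and to use the coupling to pull the control of $y_n$ down from the already controlled component $u_n$. Since $\overline{\omega_b}\subset\omega_{c_+}$ (if this inclusion of closures is not literally available one works on an exhausting family of open subsets of $\omega_b$, which does not weaken the statement), fix $\varphi\in C^\infty_0(\Omega)$ with $0\le\varphi\le1$, $\varphi\equiv1$ on $\omega_b$ and $\operatorname{supp}\varphi\subset\omega_{c_+}$. Multiply the $y$-equation \eqref{E4} by $\varphi\,\overline{y}_n$ and integrate over $\Omega$; integrating $\int_\Omega\varphi\,\overline{y}_n\,\Delta y_n\,dx$ by parts produces $-\int_\Omega\varphi\,|\nabla y_n|^2\,dx$ together with a commutator $-\int_\Omega\overline{y}_n(\nabla\varphi\cdot\nabla y_n)\,dx$, which is $O(\|y_n\|_{L^2}\|\nabla y_n\|_{L^2})=o(1)$ by \eqref{e411.3} and $\|U_n\|_{\HH}=1$. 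The coupling term $i\int_\Omega\varphi\,\beta_n b\,u_n\overline{y}_n\,dx$ is bounded by $\|b\|_\infty\|\beta_n u_n\|_{L^2(\operatorname{supp}\varphi)}\|y_n\|_{L^2}=o(1)$ by the first Lemma, precisely because $\operatorname{supp}\varphi\subset\omega_{c_+}$; the contributions of $f_n^1$ and $g_n^2$ are $o(1)$ by Cauchy--Schwarz, and the only borderline term, $-i\int_\Omega\varphi\,\beta_n f_n^2\,\overline{y}_n\,dx=-i\int_\Omega\varphi\, f_n^2\,\overline{\beta_n y_n}\,dx$, is $o(1)$ since $\beta_n y_n$ is bounded in $L^2$ (from \eqref{e48}) while $f_n^2\to0$ in $L^2$. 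Collecting these estimates gives
\begin{equation*}
\int_{\omega_b}|\nabla y_n|^2\,dx\ \le\ \int_\Omega\varphi\,|\nabla y_n|^2\,dx\ =\ \int_\Omega\varphi\,|\beta_n y_n|^2\,dx+o(1).
\end{equation*}

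It then remains to prove $\int_\Omega\varphi\,|\beta_n y_n|^2\,dx=o(1)$, i.e.\ that the high-frequency part of $y_n$ is negligible on $\omega_b$. For this I would use \eqref{E5}. On $\{x:\ b(x)\ne0\}\supset\overline{\omega_b}$ one has $|b|\ge b_0>0$ by continuity of $b$ and {\rm (LH2)}, so $\operatorname{supp}\varphi$ may be taken inside this set and \eqref{E5} solved for $\beta_n y_n$. The $u_n$-quantity to be estimated is then $\beta_n^2 u_n+\Delta u_n$; eliminating $v_n$ between \eqref{e46} and \eqref{e47} gives the useful rewriting $\beta_n^2 u_n+\Delta u_n=b\,z_n+c\,v_n-g_n^1-i\beta_n f_n^1$, in which the a priori uncontrolled term $i\beta_n v_n$ has cancelled and the remaining $u_n$-terms are handled by Lemmas $1$ and $2$ on $\omega_{c_+}$. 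Multiplying the identity for $\beta_n y_n$ by a suitably $\beta_n$-weighted test function supported in $\operatorname{supp}\varphi$, integrating by parts so that derivatives fall on the bounded quantity $\nabla y_n$ and on $\nabla u_n$ (which is $o(1)$ in $L^2(\operatorname{supp}\varphi)$ by the second Lemma), and using \eqref{e411.3} together with $f_n^i,g_n^i\to0$, one should arrive at $\int_\Omega\varphi\,|\beta_n y_n|^2\,dx=o(1)$; the lemma follows.

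The step I expect to be the real obstacle is exactly this passage from smallness of $u_n$ on $\omega_{c_+}$ to smallness of $\beta_n y_n$ on $\omega_b$. The coupling in \eqref{E5} carries the weight $\beta_n$, the same order as the $\beta_n^2$ in front of $u_n$ in the principal part, so the plain $o(1)$-bounds for $\beta_n u_n$ and $\nabla u_n$ coming from Lemmas $1$--$2$ are only marginally enough: one must be careful that, after the integrations by parts, no remainder of the form $\beta_n\times o(1)$ (produced for instance by $\beta_n^2 u_n$ itself or by a gradient hitting $u_n$) survives. It is the cancellation of the $i\beta_n v_n$ term together with the $\beta_n^{-1}$ gain in $\|y_n\|_{L^2}$ furnished by \eqref{e411.3} that makes the bookkeeping close; the genuinely geometric hypothesis ({\rm GCC} on $\omega_b$) plays no role in this lemma and enters only later, when this local estimate is fed into an observability inequality to propagate the decay of $y_n$ to all of $\Omega$.
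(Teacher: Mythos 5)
Your first step is sound: multiplying \eqref{E4} by $\varphi\overline{y}_n$, with $\varphi$ a cutoff equal to $1$ on $\omega_b$ and supported in $\omega_{c_+}$, does yield $\int_\Omega\varphi|\nabla y_n|^2dx=\int_\Omega\varphi|\beta_n y_n|^2dx+o(1)$, all remainders being handled exactly as you say. But this reduces the lemma to proving $\int_\Omega\varphi|\beta_n y_n|^2dx=o(1)$, and your proposed argument for that estimate is circular. Solving \eqref{E5} for the coupling term gives $i\beta_n b y_n=\beta_n^2u_n+\Delta u_n-i\beta_n cu_n+(\text{terms tending to }0)$, and the ``useful rewriting'' you then invoke, $\beta_n^2u_n+\Delta u_n=bz_n+cv_n-g_n^1-i\beta_n f_n^1$, is nothing but \eqref{e47} combined with \eqref{e46}; since $bz_n=i\beta_n by_n-bf_n^2$, substituting it back reproduces verbatim the unknown $i\beta_n by_n$ you set out to estimate, so no information is gained. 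Without that substitution the dangerous order is genuinely uncontrolled: the preceding lemmas only give $\beta_nu_n=o(1)$ and $\nabla u_n=o(1)$ in $L^2(\omega_{c_+})$, so $\beta_n^2u_n$ is merely $\beta_n\cdot o(1)$ and $\Delta u_n$ merely $O(\beta_n)$, and pairing against any fixed test function leaves a term $\beta_n\int(\beta_nu_n)\chi\,dx=\beta_n\cdot o(1)$ that need not vanish. This is precisely the obstruction you flag in your last paragraph, and the proposal does not overcome it.

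The paper avoids isolating $\beta_ny_n$ at this stage altogether and proves the gradient estimate first, by a symmetric cross-multiplier argument: multiply \eqref{E5} by $\frac{1}{\beta_n}\Delta\overline{y}_n$ and \eqref{E4} by $\frac{1}{\beta_n}\Delta\overline{u}_n$. Because $a=1$, the two uncontrollable quantities $-\int_\Omega\beta_n(\nabla u_n\cdot\nabla\overline{y}_n)dx+\frac{1}{\beta_n}\int_\Omega\Delta u_n\Delta\overline{y}_n\,dx$ appearing in the two resulting identities are exact complex conjugates of each other; combining the identities and taking imaginary parts cancels them and leaves $\int_\Omega b|\nabla y_n|^2dx=\int_\Omega b|\nabla u_n|^2dx+o(1)$, which is $o(1)$ by \eqref{E31.2}. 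The estimate $\int_{\omega_b}|\beta_ny_n|^2dx=o(1)$ is then deduced \emph{from} the present lemma in the following one --- the opposite logical order to yours --- and the full-strength estimates $\int_\Omega|\beta_nu_n|^2dx=o(1)$ and $\int_\Omega|\beta_ny_n|^2dx=o(1)$, where the $\beta_n\times o(1)$ difficulty must genuinely be faced, are obtained only later via the auxiliary damped-wave resolvent problem of Lemma \ref{auxiliary.2}; that is where the GCC hypothesis on $\omega_b$ actually enters the resolvent argument, contrary to your closing remark that it plays no role before the observability step.
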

\begin{proof}
The proof contains three points.\\
i) First, multiplying equation \eqref{E5} by $\frac{1}{\beta_n} \Delta \overline y_n$, then using Green's formula and the fact that $ u_n= f_n^1=0$ on $\Gamma$, we obtain	
\begin{align}\label{nablay.2}
&\displaystyle -\int_{\Omega}  \beta_n ( \nabla u_n \cdot \nabla \overline y_n)dx  + \frac{1}{\beta_n}\int_{\Omega}  \Delta u_n \Delta \overline y_ndx + i \int_{\Omega}  (\nabla b \cdot \nabla \overline y_n)y_n  dx \nonumber\\\nonumber \\
&+ i \int_\Omega b |\nabla y_n|^2dx+ i \int_{\Omega} (\nabla c \cdot \nabla \overline y_n)u_ndx + i \int_\Omega c(\nabla u_n \cdot \nabla \overline y_n)dx \\ \nonumber \\
&=\displaystyle \int_{\Omega}(-g_n^1 -b f_n^2 -cf_n^1)\frac{1}{\beta_n}  \Delta \overline y_ndx + i \int_\Omega  (\nabla f_n^1 \cdot \nabla \overline y_n)dx. \nonumber
\end{align}	
As $f_n^1$, $f_n^2$ converge to zero in $H_0^1(\Omega)$, $g_n^1$ converges to zero in $L^2(\Omega)$ and the fact that $\frac{1}{\beta_n}  \Delta  y_n$, $\nabla y_n$ are uniformly bounded in $L^2(\Omega)$, we have
\begin{equation}\label{nablay1.2}
\int_{\Omega}(-g_n^1 -b f_n^2 -cf_n^1)\frac{1}{\beta_n}  \Delta \overline y_ndx + i \int_\Omega  (\nabla f_n^1 \cdot \nabla \overline y_n)dx=o(1).
\end{equation}
Using the fact that $\nabla y_n$ is uniformly bounded in $L^2(\Omega)$, $\norm{u_n}_{L^2(\Omega)} =o(1)$, $\norm{y_n}_{L^2(\Omega)} = o(1)$ and using the estimation \eqref{E31.2}, we get 
\begin{equation}
 i \int_{\Omega}  (\nabla b \cdot \nabla \overline y_n)y_n  dx  
+ i \int_{\Omega} (\nabla c \cdot \nabla \overline y_n)u_ndx + i \int_\Omega c(\nabla u_n \cdot \nabla \overline y_n)dx = o(1). \label{nablay2.2}
\end{equation}	
Inserting now \eqref{nablay1.2} and	\eqref{nablay2.2} into \eqref{nablay.2}, we get 
\begin{equation}\label{nablayy.2}
\displaystyle-\int_{\Omega}  \beta_n  (\nabla u_n \cdot \nabla\overline y_n)dx + \frac{1}{\beta_n}\int_{\Omega}  \Delta u_n \Delta \overline y_ndx + i \int_\Omega b |\nabla y_n|^2dx =o(1).
\end{equation}
ii) Similarly, multiplying equation \eqref{E4} by $\frac{1}{\beta_n}  \Delta \overline u_n$, then using Green's formula and the fact that $ y_n= f_n^2=0$ on $\Gamma$, we obtain
\begin{align}
&\displaystyle -\int_{\Omega}  \beta_n  (\nabla y_n \cdot \nabla\overline u_n)dx  + \frac{1}{\beta_n}\int_{\Omega} \Delta y_n \Delta \overline u_ndx 
-i \int_{\Omega} \, (\nabla b \cdot \nabla \overline u_n)u_n  dx \nonumber\\ 
&- i \int_\Omega b |\nabla u_n|^2dx
=\displaystyle \int_{\Omega}(b f_n^1 - g_n^2)\frac{1}{\beta_n}  \Delta \overline u_ndx +i \int_\Omega  (\nabla f_n^2 \cdot \nabla \overline u_n)dx. \label{nablayu.2}
\end{align}	
Using the fact that $f_n^1$, $f_n^2$ converge to zero in $ H_0^1(\Omega)$, $ g_n^2$ converges to zero in $L^2(\Omega)$ and the fact that 
$\frac{1}{\beta_n} \Delta  u_n$, $\nabla u_n$ are  uniformly bounded in $L^2(\Omega)$, we get
\begin{equation}\label{nablayu1.2}
 \int_{\Omega}(b f_n^1 - g_n^2)\frac{1}{\beta_n}  \Delta \overline u_ndx + i \int_\Omega  (\nabla f_n^2 \cdot \nabla \overline u_n)dx=o(1).
\end{equation} 
Also, using the fact that $\nabla u_n$ is uniformly bounded in $L^2(\Omega)$, $\norm{u_n}_{L^2(\Omega)} =o(1)$, we have
\begin{equation}\label{nablayu2.2}
	-i \int_{\Omega}  (\nabla b \cdot \nabla \overline u_n)u_n  dx   = o(1).
\end{equation}
Inserting \eqref{nablayu1.2} and  \eqref{nablayu2.2} into  \eqref{nablayu.2}, we get	
\begin{equation}
	-\int_{\Omega}  \beta_n  (\nabla y_n \cdot \nabla\overline u_n)dx +\frac{1}{\beta_n}\int_{\Omega}  \Delta y_n \Delta \overline u_ndx - i \int_\Omega b|\nabla u_n|^2dx =o(1).\label{nablay3.2}
\end{equation}

iii) Finally, by combining \eqref{nablayy.2} and\eqref{nablay3.2} and taking the imaginary part, we obtain
\begin{equation}\label{equation4.2}
	\int_{\Omega} b  |\nabla y_n|^2dx =   \int_\Omega b|\nabla u_n|^2dx + o(1).
\end{equation}
Since $\omega_b \subset \omega_{c^{+}}$, it follows from \eqref{E31.2} and \eqref{equation4.2} that 
$$\int_{\omega_{b}}  |\nabla y_n|^2dx = o(1).$$
The proof is thus complete.
{$\quad\square$}\end{proof}
\begin{lemma}
The solution $(u_n,v_n,y_n,z_n) \in D(\AA)$ of system (\ref{e46})-(\ref{e49})  satisfies the following estimate
\begin{equation}
\displaystyle \int_{\omega_{b}} |\beta_n y_n|^2 dx = o(1).\label{betay.2}
\end{equation}	
\end{lemma}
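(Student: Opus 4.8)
The plan is to obtain \eqref{betay.2} by multiplying the $y$-equation \eqref{E4} by $\psi\overline{y}_n$, where $\psi\in C^\infty_0(\Omega)$ is a fixed cut-off with $0\le\psi\le1$, $\psi\equiv1$ on $\omega_b$ and $\operatorname{supp}\psi$ a compact subset of $\omega_{c_+}$ (using $\overline{\omega_b}\subset\omega_{c_+}$). The role of $\psi$ is to localize the computation to $\omega_b$; note that the estimate \eqref{nablay0.2} of the previous lemma also holds with $\omega_b$ replaced by $\operatorname{supp}\psi$, since its proof only used that $\nabla u_n$ is $o(1)$ in $L^2$ on $\omega_{c_+}$. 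Integrating the resulting relation over $\Omega$ and applying Green's formula (the boundary term vanishes since $y_n=0$ on $\Gamma$) gives
\begin{equation*}
\int_\Omega\psi\,|\beta_n y_n|^2\,dx=\int_\Omega\psi\,|\nabla y_n|^2\,dx+\int_\Omega\overline{y}_n\,(\nabla\psi\cdot\nabla y_n)\,dx-i\int_\Omega\beta_n b\,u_n\,\psi\,\overline{y}_n\,dx+\int_\Omega\big(-i\beta_n f_n^2+bf_n^1-g_n^2\big)\,\psi\,\overline{y}_n\,dx.
\end{equation*}

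Then I would check that every term on the right-hand side is $o(1)$. The first is $o(1)$ by \eqref{nablay0.2} applied on $\operatorname{supp}\psi$, because $0\le\psi\le1$. The second is $o(1)$ by Cauchy--Schwarz: $\nabla\psi$ is bounded, $\norm{\nabla y_n}_{L^2(\Omega)}=O(1)$ (since $\norm{U_n}_{\HH}=1$) and $\norm{y_n}_{L^2(\Omega)}=o(1)$ by \eqref{e411.3}. The coupling term is $o(1)$ because $b$ is bounded, $\norm{\beta_n u_n}_{L^2(\Omega)}=O(1)$ by \eqref{e411.3} and $\norm{y_n}_{L^2(\Omega)}=o(1)$ (alternatively one may invoke \eqref{E33} on $\operatorname{supp}\psi\subset\omega_{c_+}$). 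In the last integral the only term requiring care is $-i\beta_n f_n^2\psi\overline{y}_n$, which I would rewrite as $-i f_n^2\psi\,\overline{\beta_n y_n}$ and bound by $\norm{f_n^2}_{L^2(\Omega)}\norm{\beta_n y_n}_{L^2(\Omega)}=o(1)\cdot O(1)$, using that $f_n^2\to0$ in $H^1_0(\Omega)$ (hence in $L^2(\Omega)$) and that $\beta_n y_n$ is bounded in $L^2(\Omega)$ by \eqref{e411.3} (equivalently, from \eqref{e48} and the boundedness of $z_n$); the remaining two pieces $bf_n^1\psi\overline{y}_n$ and $-g_n^2\psi\overline{y}_n$ are $o(1)$ since $f_n^1\to0$ in $H^1_0(\Omega)$, $g_n^2\to0$ in $L^2(\Omega)$ and $y_n\to0$ in $L^2(\Omega)$.

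Collecting these bounds yields $\int_\Omega\psi\,|\beta_n y_n|^2\,dx=o(1)$, and since $\psi\ge0$ with $\psi\equiv1$ on $\omega_b$,
\begin{equation*}
\int_{\omega_b}|\beta_n y_n|^2\,dx\le\int_\Omega\psi\,|\beta_n y_n|^2\,dx=o(1),
\end{equation*}
which is \eqref{betay.2}. I expect the only genuinely delicate point to be the bookkeeping of the nested open sets: a cut-off equal to $1$ on all of $\omega_b$ must be supported slightly outside $\omega_b$, so one really needs the gradient estimate \eqref{nablay0.2} on a neighbourhood of $\overline{\omega_b}$ contained in $\omega_{c_+}$ (and, for that step, $b$ of constant sign near $\omega_b$, which may be assumed without loss of generality). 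Granting this, everything else reduces to Cauchy--Schwarz together with the a priori bounds \eqref{E1} and \eqref{e411.3} already at our disposal.
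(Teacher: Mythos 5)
Your proof is correct and takes essentially the same route as the paper: the paper multiplies \eqref{E4} by $b\,\overline{y}_n$ instead of by a cut-off $\psi\overline{y}_n$, which makes the weight automatically bounded away from zero on $\omega_b$ and supported where the gradient estimate \eqref{nablay0.2} is available, but the resulting identity and the way each term is absorbed (via \eqref{nablay0.2}, \eqref{e411.3}, \eqref{E33} and the convergences of $f_n^1, f_n^2, g_n^2$) are the same as yours. The set-bookkeeping caveat you flag (needing the gradient estimate on a neighbourhood of $\overline{\omega_b}$ and a sign condition on $b$ there) is genuine but of exactly the same nature as what the paper itself leaves implicit when it passes from $\int_\Omega b\,|\beta_n y_n|^2\,dx = o(1)$ to \eqref{betay.2}.
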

\begin{proof}
Multiplying equation \eqref{E4} by $b \overline y_n$. Then using Green's formula and the fact that $y_n=0$ on $\Gamma$, we obtain  
\begin{align}\label{betay0.2}
&\int_{\Omega}b |\beta_n y_n|^2dx -\int_{\Omega} b |\nabla y_n|^2dx - \int_{\Omega}(\nabla b \cdot \nabla y_n)  \overline y_n dx\nonumber\\
&+ i \int_{\Omega} b^2\,\beta_n u_n \overline y_ndx = \int_{\Omega}(-i \beta_n f_n^2 + bf_n^1 - g_n^2) b  \overline y_n dx.
\end{align}
As $f_n^1$, $f_n^2$ converge to zero in $H_0^1(\Omega)$, $g_n^2$ converges to zero in $L^2(\Omega)$ and $\beta_n y_n$ is uniformly bounded in $L^2(\Omega)$, we get
\begin{equation}\label{betay1.2}
\int_{\Omega}(-i \beta_n f_n^2 + bf_n^1 - g_n^2) b  \overline y_n dx = o(1).
\end{equation}  
Using the fact that $\beta_n u_n$ and $\nabla y_n$ are uniformly bounded in $L^2(\Omega)$ and $\norm {y_n}_{L^2(\Omega)} =o(1)$, we get 
\begin{equation}\label{betay2.2}
\int_{\Omega}(\nabla b \cdot \nabla y_n)  \overline y_n
  + i \int_{\Omega} b^2 \,\beta_n u_n \overline y_ndx = o(1).
\end{equation}
Inserting \eqref{betay1.2}, \eqref{betay2.2} into \eqref{betay0.2}, we obtain
\begin{equation*}
\int_{\Omega}b|\beta_n y_n|^2dx -\int_{\Omega} b |\nabla y_n|^2dx=o(1).
\end{equation*}
Using the estimation \eqref{nablay0.2} in the previous equation, we get 
\begin{equation*}
\int_{\Omega}b |\beta_n y_n|^2dx=o(1).
\end{equation*}
This yields
\begin{equation*}
\int_{\omega_{b}} |\beta_n y_n|^2dx=o(1).
\end{equation*}
The proof is thus complete.
{$\quad\square$}\end{proof}
\begin{lemma}\label{auxiliary.2}
Let $f_n$ be a bounded sequence in $L^2(\Omega)$. Then the solution $\phi_n \in H_0^1(\Omega) \cap H^2(\Omega)$ of the following system
\begin{equation}\label{auxpro12.2}
\left\{
\begin{matrix}
\beta^2_n \phi_n +\Delta \phi_n -i b\beta_n \phi_n &=& f_n &\mbox{in} & \Omega ,\\
\phi_n&=&0 &\mbox{on} & \Gamma ,\\
\end{matrix}
\right.
\end{equation}
verifies the following estimate
\begin{equation}\label{maj2.2}
\int_{\Omega}(|\beta_n \phi_n|^2 + |\nabla \phi_n|^2)dx \leq C \int_{\Omega} |f_n|^2dx,
\end{equation} 
where $C$ is a constant independent of $n$. 	
\end{lemma}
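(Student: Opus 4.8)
\emph{Proof strategy.} The plan is to recognize \eqref{auxpro12.2} as the resolvent equation, evaluated on the imaginary axis, of the scalar damped wave equation with Dirichlet boundary conditions, so that \eqref{maj2.2} becomes exactly the statement that this resolvent is bounded uniformly along $i\rr$. Three steps are involved: first, identify \eqref{auxpro12.2} with a resolvent equation; second, invoke the exponential stability of the damped wave equation under the geometric control condition; third, read off \eqref{maj2.2}. A word of caution on notation: for \eqref{maj2.2} to hold with a constant independent of $n$, the coefficient written $b$ in \eqref{auxpro12.2} must be understood as the relevant nonnegative $W^{1,\infty}$ damping coefficient whose positivity set contains an open subset of $\Omega$ satisfying the GCC — which is precisely the situation in which the lemma is applied, the function $c$ playing that role.

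For the identification, I would work on $\HH_0 := H^1_0(\Omega)\times L^2(\Omega)$, equipped with $\norm{(\phi,\psi)}_{\HH_0}^2 = \int_\Omega |\nabla\phi|^2\,dx + \int_\Omega|\psi|^2\,dx$, and introduce the damped wave operator $\AA_d(\phi,\psi) = (\psi,\Delta\phi - b\psi)$, $D(\AA_d) = (H^2(\Omega)\cap H^1_0(\Omega))\times H^1_0(\Omega)$, which generates the $C_0$-semigroup associated with $\phi_{tt}-\Delta\phi+b\phi_t = 0$, $\phi|_{\Gamma}=0$. Eliminating $\psi = i\beta\phi$ in the equations $i\beta\phi-\psi = 0$, $i\beta\psi-\Delta\phi+b\psi = -g$ shows that $(i\beta - \AA_d)^{-1}(0,-g) = (\phi,i\beta\phi)$, where $\phi$ solves $\beta^2\phi+\Delta\phi-i\beta b\phi = g$ with $\phi|_{\Gamma}=0$; conversely, a solution $\phi_n\in H^2(\Omega)\cap H^1_0(\Omega)$ of \eqref{auxpro12.2} gives $(\phi_n,i\beta_n\phi_n)\in D(\AA_d)$ with $(i\beta_n-\AA_d)(\phi_n,i\beta_n\phi_n) = (0,-f_n)$.

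Next, since the damping region contains an open subset of $\Omega$ satisfying the GCC, the semigroup $(e^{t\AA_d})_{t\ge0}$ is exponentially stable on $\HH_0$; this follows from the observability inequality of Bardos, Lebeau and Rauch \cite{Bardos-Lebeau-Rauch}. By the Huang--Prüss theorem (\cite{Huang-85,Pruss-84}, recalled in this section), exponential stability is equivalent to $i\rr\subset\rho(\AA_d)$ together with $\sup_{\beta\in\rr}\norm{(i\beta-\AA_d)^{-1}}_{\LL(\HH_0)} =: C_0<\infty$; in particular \eqref{auxpro12.2} is uniquely solvable for every $n$. Applying this uniform bound to the datum $(0,-f_n)\in\HH_0$ and using the identification above yields
\[ \int_\Omega\big(|\beta_n\phi_n|^2+|\nabla\phi_n|^2\big)\,dx = \norm{(\phi_n,i\beta_n\phi_n)}_{\HH_0}^2 \le C_0^2\,\norm{(0,-f_n)}_{\HH_0}^2 = C_0^2\int_\Omega|f_n|^2\,dx, \]
which is \eqref{maj2.2} with $C = C_0^2$, a constant depending only on $\Omega$, on $b$ and on the GCC time, hence independent of $n$; note that this single bound handles both a bounded sequence $\beta_n$ and the case $\beta_n\to+\infty$.

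The only real content lies in step two — the uniform resolvent bound for the scalar damped wave operator, equivalently its exponential decay — and this is exactly where the GCC hypothesis is used, through the high-frequency (microlocal) observability of Bardos--Lebeau--Rauch; everything else is bookkeeping plus one application of the resolvent bound. If one wishes to avoid invoking the semigroup characterization, the estimate can instead be proved by contradiction: assume \eqref{maj2.2} fails, normalize $\int_\Omega(|\beta_n\phi_n|^2+|\nabla\phi_n|^2)\,dx = 1$ with $\norm{f_n}_{L^2(\Omega)}\to 0$, dispose of the subsequence along which $\beta_n$ remains bounded by compactness and unique continuation for the Helmholtz equation on the connected set $\Omega$, and handle $\beta_n\to+\infty$ again via Bardos--Lebeau--Rauch — so the microlocal ingredient is unavoidable either way.
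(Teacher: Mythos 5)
Your argument is correct and is essentially the paper's own proof: the paper likewise rewrites \eqref{auxpro12.2} as $(i\beta_n-\AA_{aux})(\phi_n,\tilde\phi_n)=(0,-f_n)$ for the damped wave operator $\AA_{aux}(\phi,\tilde\phi)=(\tilde\phi,\Delta\phi-b\tilde\phi)$ on $H_0^1(\Omega)\times L^2(\Omega)$, invokes exponential stability under GCC via \cite{Bardos-Lebeau-Rauch} together with the Huang--Pr\"uss characterization \cite{Huang-85,Pruss-84} to obtain a uniform bound on $\norm{(i\beta_n-\AA_{aux})^{-1}}$ along the imaginary axis, and reads off \eqref{maj2.2}. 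Your cautionary remark about the sign of the damping coefficient is well taken, but note that the paper applies the lemma with the coupling coefficient $b$ itself (taking $f_n=u_n$ and $f_n=y_n$), not with $c$: the subsequent substitution $\beta_n^2\overline\phi_n+\Delta\overline\phi_n=\overline f_n-ib\beta_n\overline\phi_n$ relies on $b$ being the coefficient in \eqref{auxpro12.2}, so the implicit requirement that the damping coefficient be nonnegative (with positivity set satisfying GCC) really falls on $b$ and is left unaddressed in the paper.
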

\begin{proof}
Consider the following wave equation
\begin{equation}\label{auxpro2.2}
\left \{
\begin{matrix}
\phi_{tt} - \Delta \phi + b \phi_t &=&0 &\rm{in} & \Omega,\\
\phi \quad& = &0 &\rm{on}& \Gamma.
\end{matrix}
\right.
\end{equation}	
System \eqref{auxpro2.2} is wellposed in the space $H = H_0^1(\Omega) \times L^2(\Omega)$ and since $\omega_{b}$ verifies GCC condition then it is exponentially stable (see \cite{Bardos-Lebeau-Rauch}). Therefore, following Huang \cite{Huang-85} and Pr\"uss \cite{Pruss-84}, we deduce that the resolvent of its corresponding operator 
$$\AA_{aux} : D(\AA_{aux}) \longrightarrow H_0^1(\Omega) \times L^2(\Omega)$$
defined by 
$D(\AA_{aux}) =(H^2(\Omega) \cap H_0^1(\Omega)) \times H_0^1(\Omega) \quad \mbox{and} \quad \AA_{aux}(\phi, \tilde \phi)= (\tilde \phi, \Delta \phi - b \tilde \phi)$
is uniformly bounded on the imaginary axis. 

On the other hand, system \eqref{auxpro12.2} can be rewritten in the form:
\begin{equation}
\left\{
\begin{matrix}
i\beta_n \phi_n - \tilde \phi_n&=& 0, \\ 
i\beta_n \tilde \phi_n - \Delta \phi_n + b \tilde \phi_n&=&-f_n. 
\end{matrix}
\right.
\end{equation}
So, 
\begin{equation}
\begin{pmatrix}
i\beta_n - \AA_{aux}
\end{pmatrix}
\begin{pmatrix}
\phi_n \\
\tilde \phi_n
\end{pmatrix}
=
\begin{pmatrix}
0 \\
-f_n
\end{pmatrix}.
\end{equation}
Equivalently, 
\begin{equation}
\begin{pmatrix}
\phi_n \\
\tilde \phi_n
\end{pmatrix}
=
\begin{pmatrix}
i\beta_n - \AA_{aux}
\end{pmatrix}^{-1}
\begin{pmatrix}
0 \\
-f_n
\end{pmatrix}.
\end{equation}
This yields 
\begin{align}
\norm{(\phi_n, \tilde \phi_n)}^2_{H} & \leq \norm {(i\beta_n - \AA_{aux})^{-1}}^2_{\LL{(H)}} \norm{(0,-f_n)}^2_{H}\nonumber\\
&\leq C \int_{\Omega}|f_n|^2dx,
\end{align}
where $C$ is a constant independent of $n$.
Consequently, we deduce
\begin{equation*}
\int_{\Omega}(|\beta_n \phi_n|^2 + |\nabla \phi_n|^2)dx \leq C \int_{\Omega} |f_n|^2dx.
\end{equation*} 
The proof is thus complete.
{$\quad\square$}\end{proof}
\begin{lemma}
The solution $(u_n,v_n,y_n,z_n) \in D(\AA)$ of system (\ref{e46})-(\ref{e49})  satisfies the following estimate
\begin{equation}
\displaystyle \int_{\Omega} |\beta_n u_n|^2 dx = o(1).\label{betau.2}
\end{equation}	
\end{lemma}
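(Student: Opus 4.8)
The plan is to exploit Lemma \ref{auxiliary.2} with the right-hand side of equation \eqref{E4}, viewing $y_n$ as the solution $\phi_n$ of the auxiliary resolvent problem, so that we gain an a priori bound on $\int_\Omega |\beta_n y_n|^2\,dx$ and $\int_\Omega |\nabla y_n|^2\,dx$ controlled by the $L^2$-norm of that right-hand side. Then we feed this global control on the full energy of the $y$-component back into equation \eqref{E5} for $u_n$.

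More precisely, I would first rewrite \eqref{E4} as $\beta_n^2 y_n + \Delta y_n - i b \beta_n y_n = F_n$ where $F_n = -i\beta_n f_n^2 + b f_n^1 - g_n^2 - i\beta_n b u_n - i b \beta_n y_n + i\beta_n b u_n$; but a cleaner route is to keep \eqref{E4} in the form $\beta_n^2 y_n + \Delta y_n - i b\beta_n y_n = -i\beta_n f_n^2 + bf_n^1 - g_n^2 - i\beta_n b u_n + ib\beta_n y_n - ib\beta_n y_n$, i.e.\ to absorb the $-ib\beta_n y_n$ coming from the auxiliary operator and treat $\tilde f_n := -i\beta_n f_n^2 + bf_n^1 - g_n^2 - i\beta_n b u_n + i b\beta_n y_n$ as the forcing term. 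Since $\beta_n f_n^2 = o(1)$ in $L^2$, $b f_n^1 = o(1)$, $g_n^2 = o(1)$, $\beta_n u_n = O(1)$ in $L^2$ by \eqref{e411.3}, and $\beta_n y_n = O(1)$ in $L^2$, the sequence $\tilde f_n$ is bounded in $L^2(\Omega)$. Lemma \ref{auxiliary.2} then yields $\int_\Omega (|\beta_n y_n|^2 + |\nabla y_n|^2)\,dx = O(1)$ — which we already knew — so the mere boundedness is not enough; the point is rather to use the \emph{smallness} we have accumulated. So instead I would apply Lemma \ref{auxiliary.2} with $\phi_n$ equal to a rescaled/localized version, or — more directly — multiply \eqref{E5} by a suitable multiplier and use all the $o(1)$ estimates \eqref{E33}, \eqref{E31.2}, \eqref{nablay0.2}, \eqref{betay.2} already established on $\omega_b \subset \omega_{c_+}$.

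Concretely, the cleanest approach: take the auxiliary problem \eqref{auxpro12.2} with forcing $f_n$ equal to the right-hand side of \eqref{E5} plus the coupling term $i\beta_n b y_n$, i.e.\ write \eqref{E5} as $\beta_n^2 u_n + \Delta u_n - i\beta_n c u_n = h_n$ with $h_n = i\beta_n b y_n - g_n^1 - bf_n^2 - i\beta_n f_n^1 - c f_n^1$. This is not quite the form of \eqref{auxpro12.2} because the zero-order damping coefficient there is $b$, not $c$, and $c$ governs a GCC region containing $\omega_b$; since $\omega_{c_+}$ satisfies GCC (it contains $\omega_b$ which does), the analogue of Lemma \ref{auxiliary.2} holds with $b$ replaced by $c$. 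Applying it gives $\int_\Omega |\beta_n u_n|^2\,dx \le C \int_\Omega |h_n|^2\,dx$. Now $g_n^1, bf_n^2, i\beta_n f_n^1, cf_n^1$ are all $o(1)$ in $L^2(\Omega)$, and the only non-trivial term is $i\beta_n b y_n$; but $b$ is supported (up to the closure business) essentially on $\omega_b$, and on $\omega_b$ we have $\int_{\omega_b}|\beta_n y_n|^2\,dx = o(1)$ by \eqref{betay.2}. Hence $\int_\Omega |h_n|^2\,dx = o(1)$, and therefore $\int_\Omega |\beta_n u_n|^2\,dx = o(1)$.

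The main obstacle is justifying the use of Lemma \ref{auxiliary.2} with the coefficient $c$ (GCC via $\omega_{c_+}$) rather than $b$, and carefully handling the fact that $b$ and $c$ need not be exactly supported on $\overline{\omega_b}$, $\overline{\omega_{c_+}}$ — one must split $\int_\Omega |\beta_n b y_n|^2 = \int_{\omega_b} + \int_{\Omega\setminus\omega_b}$ and control the second piece using $\|b\|_\infty$ together with the bound $\int_\Omega |y_n|^2 = O(1/\beta_n^2)$ from \eqref{e411.3}, after noting $\beta_n b y_n = \beta_n \cdot O(1/\beta_n) = O(1)$ is \emph{not} $o(1)$ off $\omega_b$ in general, so one genuinely needs $b$ to vanish outside a neighbourhood of $\omega_b$ or an additional localization; the $W^{1,\infty}$ hypothesis and the structure of (LH2) are what make this work. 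A secondary technical point is the verification that the exponential stability of the scalar damped wave equation with damping $c$ (hence the uniform resolvent bound) transfers verbatim, which follows from \cite{Bardos-Lebeau-Rauch} exactly as in the proof of Lemma \ref{auxiliary.2}.
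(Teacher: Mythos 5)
There is a genuine gap in the final, ``cleanest'' version of your argument. You rewrite \eqref{E5} as $\beta_n^2 u_n + \Delta u_n - i\beta_n c u_n = h_n$ with $h_n = i\beta_n b y_n - g_n^1 - b f_n^2 - i\beta_n f_n^1 - c f_n^1$, apply the analogue of Lemma \ref{auxiliary.2} with damping coefficient $c$ (which is legitimate, since $\omega_{c_+}\supset\omega_b$ satisfies GCC), and want to conclude from $\int_\Omega|\beta_n u_n|^2\,dx\le C\int_\Omega |h_n|^2\,dx$ by showing $\norm{h_n}_{L^2(\Omega)}=o(1)$. The problem is the term $i\beta_n f_n^1$: from \eqref{e46} you only know $f_n^1\to 0$ in $H_0^1(\Omega)$, which gives no control on $\beta_n f_n^1$ in $L^2(\Omega)$ --- it is not known to be $o(1)$, nor even bounded (a rate $\norm{f_n^1}_{H^1}\sim\beta_n^{-1/2}$ is perfectly compatible with \eqref{e46} and makes $\beta_n\norm{f_n^1}_{L^2}$ blow up). So the assertion that $i\beta_n f_n^1$ is $o(1)$ in $L^2(\Omega)$ is unjustified, and the direct resolvent estimate does not close.

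This is precisely the difficulty the paper's proof is designed to avoid: instead of applying the resolvent bound to $u_n$ itself, it applies Lemma \ref{auxiliary.2} with forcing $f_n=u_n$, so that by \eqref{maj2.2} and \eqref{e411.3} the quantities $\beta_n^2\phi_n$ and $\beta_n\nabla\phi_n$ are bounded in $L^2(\Omega)$, and then uses $\beta_n^2\overline\phi_n$ as a test function against \eqref{E5}. In that duality formulation the dangerous term becomes $-i\int_\Omega\beta_n f_n^1\,\beta_n^2\overline\phi_n\,dx$, which is handled by substituting $\beta_n^2\overline\phi_n=\overline u_n-\Delta\overline\phi_n-ib\beta_n\overline\phi_n$ and integrating by parts, so that only $f_n^1\to0$ and $\nabla f_n^1\to 0$ in $L^2(\Omega)$ are needed, together with the boundedness of $\beta_n u_n$, $\beta_n\nabla\phi_n$ and $\beta_n^2\phi_n$; the remaining coupling terms are then killed by \eqref{E33} and \eqref{betay.2}. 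Your secondary concern about the support of $b$ versus $\omega_b$ is a fair observation (the paper is itself somewhat cavalier on this point), but it is not the main obstruction; the $\beta_n f_n^1$ term is. To repair your argument you would either have to prove $\beta_n f_n^1=o(1)$ in $L^2(\Omega)$, which is not available, or switch to the test-function version as in the paper.
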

\begin{proof} Taking $f_n = u_n$ in Lemma \ref{auxiliary.2} and  
multiplying equation \eqref{E5} by $\beta_n^2 \overline  \phi_n$ where $\phi_n$ is a solution of  \eqref{auxpro12.2}. Then using Green's formula and the fact that $u_n = \phi_n = 0$ on $\Gamma$, we obtain
\begin{equation} \label{Eau1.2}
\begin{array}{l}
\displaystyle
\int_{\Omega} \beta_n^2u_n(\beta_n^2 \overline \phi_n  + \Delta \overline \phi_n)dx - i  \int_{\Omega}b\beta_n y_n \beta^2_n\overline\phi_n dx - i \int_{\Omega}c \beta_n u_n \beta_n^2\overline\phi_ndx\\ \\
\displaystyle =\int_{\Omega}(-g_n^1-bf_n^2-cf_n^1)\beta_n^2\overline \phi _ndx -i \int_{\Omega} \beta_n f_n^1 \beta_n^2 \overline \phi_n dx.
\end{array}
\end{equation}  

Substituting the first equation of system \eqref{auxpro12.2} into the first term of \eqref{Eau1.2}, we get 
\begin{equation} \label{Eau11.2}
\begin{array}{l}
\displaystyle
\int_{\Omega} |\beta_nu_n|^2dx -i\int_{\Omega} \beta_n^2\overline \phi_n b \beta_n u_ndx- i  \int_{\Omega}b\beta_n y_n \beta^2_n\overline\phi_n dx - i \int_{\Omega}c \beta_n u_n \beta_n^2\overline\phi_ndx\\ \\
\displaystyle =\int_{\Omega}(-g_n^1-bf_n^2-cf_n^1)\beta_n^2\overline \phi _ndx -i \int_{\Omega} \beta_n f_n^1 \beta_n^2 \overline \phi_n dx.
\end{array}
\end{equation}
As $f_n^1$, $f_n^2$ converge to zero in $H_0^1(\Omega)$ and $\beta_n^2 \phi_n$ is uniformly bounded in $L^2(\Omega)$ due to \eqref{maj2.2}, we get
\begin{equation}\label{auxe2.2}
\int_{\Omega}(-g_n^1-bf_n^2-cf_n^1)\beta_n^2\overline \phi _ndx = o(1).
\end{equation}
From the first equation of \eqref{auxpro12.2}, we have $\beta_n^2 \overline \phi_n = \overline u_n - \Delta \overline \phi_n -i b \beta_n \overline \phi_n$.  Consequently, we have
\begin{align}
 - i\int_{\Omega}\beta_nf_n^1 \beta_n^2 \overline \varphi_{n}dx &= - i\int_{\Omega}\beta_nf_n^1(\overline u_n - \Delta \overline \phi_n -i b\beta_n \overline \phi_n)dx \nonumber \\
 &= -i \int_{ \Omega}\beta_nf_n^1\overline u_ndx -i\int_{ \Omega}\beta_n (\nabla \overline \phi_n \cdot \nabla f_n^1)dx - \int_{\Omega}b f_n^1 \beta_n^2 \overline\phi_n,
\end{align} 
which yields
\begin{equation}\label{auxe1.2}
- i\int_{\Omega}\beta_nf_n^1 \beta_n^2 \overline \varphi_{n}dx = o(1),
\end{equation}
because $f_n^1$ converges to zero in $H_0^1(\Omega)$ and $\beta_n u_n$, $\beta_n^2 \phi_n$, $\beta_n\nabla \phi_n$ are uniformly bounded in $L^2(\Omega)$. 

Substituting now \eqref{auxe2.2} and \eqref{auxe1.2} into \eqref{Eau11.2}
\begin{equation}
\int_{\Omega} |\beta_nu_n|^2dx --i\int_{\Omega} \beta_n^2\overline \phi_n b\beta_n u_ndx- i  \int_{\Omega}b\beta_n y_n \beta^2_n\overline\phi_n dx - i \int_{\Omega}c \beta_n u_n \beta_n^2\overline\phi_ndx=o(1).
\end{equation}
Finally, using estimations \eqref{E33},
\eqref{betay.2} and the fact that $\beta_n^2 \overline \phi_n$ is uniformly bounded in $L^2(\Omega)$ into the previous equation, we obtain 
\begin{equation}
\int_{\Omega} |\beta_nu_n|^2dx=o(1).
\end{equation}
The proof is thus complete.
{$\quad\square$}\end{proof}
\begin{lemma}
The solution $(u_n,v_n,y_n,z_n) \in D(\AA)$ of system (\ref{e46})-(\ref{e49})  satisfies the following estimate
\begin{equation}
\displaystyle \int_{\Omega} |\beta_n y_n|^2 dx = o(1).\label{betayO.2}
\end{equation}	
\end{lemma}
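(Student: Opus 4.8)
The plan is to run the same scheme that produced \eqref{betau.2}, now applying Lemma~\ref{auxiliary.2} with $f_n=y_n$. Since $\int_\Omega|y_n|^2\,dx=O(\beta_n^{-2})$ by \eqref{e411.3}, the sequence $(y_n)$ is bounded in $L^2(\Omega)$, so Lemma~\ref{auxiliary.2} furnishes $\phi_n\in H^2(\Omega)\cap H_0^1(\Omega)$ solving $\beta_n^2\phi_n+\Delta\phi_n-ib\beta_n\phi_n=y_n$ in $\Omega$, $\phi_n=0$ on $\Gamma$, with $\int_\Omega(|\beta_n\phi_n|^2+|\nabla\phi_n|^2)\,dx\le C\int_\Omega|y_n|^2\,dx=O(\beta_n^{-2})$; in particular $\beta_n^2\phi_n$ and $\beta_n\nabla\phi_n$ are bounded in $L^2(\Omega)$.

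First I would multiply \eqref{E4} by $\beta_n^2\overline\phi_n$, integrate over $\Omega$, integrate by parts twice (the boundary terms vanishing since $y_n=\phi_n=0$ on $\Gamma$) and substitute $\beta_n^2\overline\phi_n+\Delta\overline\phi_n=\overline{y_n}-ib\beta_n\overline\phi_n$ coming from the conjugate of the auxiliary equation. This produces
\[\int_\Omega|\beta_n y_n|^2\,dx=i\int_\Omega b\,(\beta_n y_n)\,\beta_n^2\overline\phi_n\,dx-i\int_\Omega b\,(\beta_n u_n)\,\beta_n^2\overline\phi_n\,dx+\int_\Omega\bigl(-i\beta_n f_n^2+bf_n^1-g_n^2\bigr)\beta_n^2\overline\phi_n\,dx.\]
The integrals $\int_\Omega bf_n^1\,\beta_n^2\overline\phi_n\,dx$ and $\int_\Omega g_n^2\,\beta_n^2\overline\phi_n\,dx$ are $o(1)$ because $f_n^1,g_n^2\to0$ in $L^2(\Omega)$ while $\beta_n^2\phi_n$ stays bounded, and $\int_\Omega b\,(\beta_n u_n)\,\beta_n^2\overline\phi_n\,dx=o(1)$ by \eqref{betau.2}.

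The term $-i\int_\Omega\beta_n f_n^2\,\beta_n^2\overline\phi_n\,dx$ is not covered directly, since $\beta_n f_n^2$ need not be bounded in $L^2(\Omega)$; here I would substitute $\beta_n^2\overline\phi_n=\overline{y_n}-\Delta\overline\phi_n-ib\beta_n\overline\phi_n$ and integrate by parts the $\Delta\overline\phi_n$ part, so that this term splits into $-i\int_\Omega\beta_n f_n^2\,\overline{y_n}\,dx-i\int_\Omega\beta_n\nabla f_n^2\cdot\nabla\overline\phi_n\,dx-\int_\Omega b f_n^2\,\beta_n^2\overline\phi_n\,dx$, each being $o(1)$ (respectively from $\|y_n\|_{L^2}=O(\beta_n^{-1})$ and $f_n^2\to0$ in $L^2(\Omega)$; from $\nabla f_n^2\to0$ in $L^2(\Omega)$ and $\beta_n\nabla\phi_n$ bounded; from $f_n^2\to0$ in $L^2(\Omega)$ and $\beta_n^2\phi_n$ bounded).

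The only genuinely delicate term is $i\int_\Omega b\,(\beta_n y_n)\,\beta_n^2\overline\phi_n\,dx$, since a priori one knows only $\|\beta_n y_n\|_{L^2(\Omega)}=O(1)$. By Cauchy--Schwarz it is dominated by $\bigl(\int_\Omega b^2|\beta_n y_n|^2\,dx\bigr)^{1/2}\|\beta_n^2\phi_n\|_{L^2(\Omega)}$, and, using the localization of $b$ in $\omega_b$ together with \eqref{betay.2}, $\int_\Omega b^2|\beta_n y_n|^2\,dx\le\|b\|_{L^\infty}^2\int_{\omega_b}|\beta_n y_n|^2\,dx=o(1)$, so this term is $o(1)$ as well. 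Substituting all of this into the displayed identity gives \eqref{betayO.2}. I expect this last term to be the main obstacle: it is where the geometric control condition on $\omega_b$ is actually exploited, via the uniform resolvent bound for the auxiliary damped wave which underlies both $\phi_n$ here and the estimate \eqref{betau.2} used above; everything else is routine bookkeeping of lower-order quantities relying on \eqref{e411.3}, \eqref{betau.2} and \eqref{betay.2}. (Equivalently, one could rewrite the $(y_n,z_n)$ block of \eqref{e46}--\eqref{e49} as $(i\beta_n-\AA_{aux})(y_n,z_n)=(f_n^2,\ g_n^2+bv_n+ib\beta_n y_n-bf_n^2)$ and invoke the uniform boundedness of $(i\beta_n-\AA_{aux})^{-1}$ on $H_0^1(\Omega)\times L^2(\Omega)$ established inside the proof of Lemma~\ref{auxiliary.2}; the quantity to be absorbed is again $\|b\beta_n y_n\|_{L^2(\Omega)}$, handled by \eqref{betay.2}.)
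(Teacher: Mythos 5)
Your proposal follows the paper's proof essentially verbatim: the same choice $f_n=y_n$ in Lemma \ref{auxiliary.2}, the same multiplier $\beta_n^2\overline\phi_n$ applied to \eqref{E4}, the same substitution of the auxiliary equation to treat $-i\int_\Omega\beta_n f_n^2\,\beta_n^2\overline\phi_n\,dx$ by parts, and the same use of \eqref{betau.2} and \eqref{betay.2} to dispose of the two coupling terms. The only (shared) caveat is that passing from $\int_{\omega_b}|\beta_n y_n|^2=o(1)$ to $\int_\Omega b^2|\beta_n y_n|^2=o(1)$ implicitly assumes $b$ is supported in $\overline{\omega_b}$, exactly as the paper's own citation of \eqref{betay.2} does; note in any case that this term is harmless even without \eqref{betay.2}, since \eqref{maj2.2} with $f_n=y_n$ and $\|y_n\|_{L^2(\Omega)}=O(\beta_n^{-1})$ give $\|\beta_n^2\phi_n\|_{L^2(\Omega)}=o(1)$ while $\|\beta_n y_n\|_{L^2(\Omega)}=O(1)$.
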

\begin{proof}
Taking $f_n = y_n$ in Lemma \ref{auxiliary.2}. Multiplying equation \eqref{E4} by $\beta_n^2 \overline  \phi_n$ where $\phi_n$ is a solution of  \eqref{auxpro12.2}. Then using Green's formula and the fact that $y_n = \phi_n=0$ on $\Gamma$, we obtain

\begin{equation} \label{Eau7.2}
\begin{array}{l}
\displaystyle
		\int_{\Omega} \beta_n^2y_n(\beta_n^2 \overline \phi _n + \Delta \overline \phi_n)dx + i  \int_{\Omega}b \beta_n u_n \beta_n^2 \overline\phi_ndx \\ \\
		\displaystyle = - i\int_{\Omega}\beta_nf_n^2 \beta_n^2\overline \phi _n+ \int_{\Omega}(bf_n^1-g_n^2)\beta_n^2\overline \phi _ndx.
	\end{array}
\end{equation}  
Then, substituting the first equation of problem \eqref{auxpro12.2} into the first term of \eqref{Eau7.2}, we get
\begin{equation}
	\begin{array}{l}\label{Eaux7}
		\displaystyle
		\int_{\Omega}|\beta_n y_n|^2dx -i\int_{\Omega}b\beta_n^2\overline \phi_n \beta_n y_ndx+ i  \int_{\Omega}b \beta_n u_n \beta_n^2 \overline\phi_ndx \\ \\
		\displaystyle = - i\int_{\Omega}\beta_nf_n^2 \beta_n^2\overline \phi _n+ \int_{\Omega}(bf_n^1-g_n^2)\beta_n^2\overline \phi _ndx.
	\end{array}
\end{equation} 
Since $\beta_n^2 \overline \phi_n$ is uniformly bounded in $L^2(\Omega)$, $f_n^1$ converges to zero in $H_0^1(\Omega)$ and $g_n^2$ converges to zero in $L^2(\Omega)$, we have
\begin{equation}\label{Eau8}
	\int_{\Omega}(-bf_n^1-g_n^2)\beta_n^2\overline \phi _ndx= o(1).
\end{equation} 
Moreover, using the first equation of problem \eqref{auxpro12.2} and integrating by parts yields
  
\begin{align}\label{Eau9}
\displaystyle-i\int_{\Omega}\beta_nf_n^2 \beta_n^2 \overline \phi_ndx &= -i\int_{\Omega}(\overline y_n-\Delta \overline \phi_n - i b \beta_n \overline \phi_n )\beta_n f_n^2dx \nonumber\\ 
\displaystyle&= -i \int_{\Omega} f_n^2 \beta_n \overline y_ndx -i\int_{\Omega} \beta_n (\nabla \overline \phi_n.\nabla f_n^2)dx - \int_{\Omega} b f_n^2 \beta_n^2 \overline \phi_n dx.
\end{align}    
Using the fact that $\beta_n  y_n$, $\beta_n^2  \phi_n$ and $\beta_n \nabla \phi_n$ are uniformly bounded in $L^2(\Omega)$  and $f_n^2$ converges to zero in $H_0^1(\Omega)$ in \eqref{Eau9}, we get 
\begin{equation}\label{Eau10}
-i	\int_{\Omega}\beta_nf_n^2 \beta_n^2 \overline \phi_ndx =o(1).
\end{equation}
Inserting \eqref{Eau8}, \eqref{Eau10} into \eqref{Eaux7}, we obtain  
\begin{equation}\label{Eau1}
\int_{\Omega}|\beta_n y_n|^2dx -i\int_{\Omega}b\beta_n^2\overline \phi_n \beta_n y_ndx+ i  \int_{\Omega}b \beta_n u_n \beta_n^2 \overline\phi_ndx = o(1).
\end{equation}
Finally, using \eqref{betay.2}, \eqref{betau.2} and the fact that $\beta_n^2 \overline\phi_n$ is uniformly bounded in $L^2(\Omega)$, we deduce
\begin{equation*}
\int_{\Omega} |\beta_n y_n|^2 dx = o(1).
\end{equation*}
The proof is thus complete.
{$\quad\square$}\end{proof}
\begin{lemma}
The solution $(u_n,v_n,y_n,z_n) \in D(\AA)$ of system (\ref{e46})-(\ref{e49})  satisfies the following estimates
\begin{equation}\label{graduyo.2}
\int_{\Omega} |\nabla u_n|^2 dx = o(1) \quad {\rm{and}} \quad \int_{\Omega} |\nabla y_n|^2 dx = o(1).	
\end{equation}	
\end{lemma}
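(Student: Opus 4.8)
The plan is to read off the two Dirichlet energies from the reduced equations \eqref{E5} and \eqref{E4} by multiplying them by $\overline u_n$ and $\overline y_n$ respectively. The only ingredients needed are the estimates already proved: $\int_\Omega c|\beta_n u_n|^2 dx = o(1)$ from \eqref{E33}, $\int_\Omega|\beta_n u_n|^2 dx = o(1)$ from \eqref{betau.2}, $\int_\Omega|\beta_n y_n|^2 dx = o(1)$ from \eqref{betayO.2}, the facts $\norm{u_n}_{L^2(\Omega)}=o(1)$ and $\norm{y_n}_{L^2(\Omega)}=o(1)$ from \eqref{e411.3}, the convergences $f_n^1,f_n^2\to 0$ in $H_0^1(\Omega)\hookrightarrow L^2(\Omega)$ and $g_n^1,g_n^2\to 0$ in $L^2(\Omega)$, and the uniform $L^2$-bounds on $\beta_n u_n$, $\beta_n y_n$ coming from \eqref{e46}, \eqref{e48} together with $\norm{U_n}_\HH=1$.

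First I would multiply \eqref{E5} by $\overline u_n$, integrate over $\Omega$ and use Green's formula with $u_n=0$ on $\Gamma$, which gives
\[
\int_\Omega|\nabla u_n|^2 dx = \int_\Omega\beta_n^2|u_n|^2 dx - i\beta_n\int_\Omega b\,y_n\overline u_n dx - i\beta_n\int_\Omega c\,|u_n|^2 dx - \int_\Omega\big(-g_n^1-bf_n^2-i\beta_n f_n^1-cf_n^1\big)\overline u_n dx.
\]
Every term on the right is $o(1)$: the first by \eqref{betau.2}; $\beta_n\int_\Omega c|u_n|^2 dx = \beta_n^{-1}\int_\Omega c|\beta_n u_n|^2 dx = o(1)$ by \eqref{E33} since $\beta_n\to+\infty$; the coupling term satisfies $\beta_n\big|\int_\Omega b\,y_n\overline u_n dx\big|\le\norm{b}_{L^\infty(\Omega)}\norm{\beta_n y_n}_{L^2(\Omega)}\norm{u_n}_{L^2(\Omega)}=o(1)$ by \eqref{betayO.2} and \eqref{e411.3}; and the last bracket is $o(1)$ upon pairing $\beta_n f_n^1$ with $u_n$, i.e. writing $-i\beta_n\int_\Omega f_n^1\overline u_n dx=-i\int_\Omega f_n^1(\beta_n\overline u_n)dx$, and using $f_n^1\to0$ in $L^2$, $\beta_n u_n$ bounded, $u_n\to0$ in $L^2$, $g_n^1\to0$ in $L^2$. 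Hence $\int_\Omega|\nabla u_n|^2 dx=o(1)$. Symmetrically, multiplying \eqref{E4} by $\overline y_n$, integrating and using $y_n=0$ on $\Gamma$ yields
\[
\int_\Omega|\nabla y_n|^2 dx = \int_\Omega\beta_n^2|y_n|^2 dx + i\beta_n\int_\Omega b\,u_n\overline y_n dx - \int_\Omega\big(-i\beta_n f_n^2+bf_n^1-g_n^2\big)\overline y_n dx,
\]
where $\int_\Omega\beta_n^2|y_n|^2 dx=o(1)$ by \eqref{betayO.2}, $\beta_n\big|\int_\Omega b\,u_n\overline y_n dx\big|\le\norm{b}_{L^\infty(\Omega)}\norm{\beta_n u_n}_{L^2(\Omega)}\norm{y_n}_{L^2(\Omega)}=o(1)$ by \eqref{betau.2} and \eqref{e411.3}, and the last bracket is $o(1)$ by the same pairing $-i\beta_n\int_\Omega f_n^2\overline y_n dx=-i\int_\Omega f_n^2(\beta_n\overline y_n)dx$ with $\beta_n y_n$ bounded in $L^2(\Omega)$. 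Thus $\int_\Omega|\nabla y_n|^2 dx=o(1)$.

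There is no genuine difficulty in this lemma; the one point that must be handled with care is that the sequences $\beta_n f_n^1$ and $\beta_n f_n^2$ need not be bounded, so these factors must always be recombined with $\beta_n u_n$ and $\beta_n y_n$ (which are bounded, indeed $o(1)$) rather than estimated on their own. Combining \eqref{graduyo.2} with \eqref{betau.2}, \eqref{betayO.2} and the relations $v_n=i\beta_n u_n-f_n^1$, $z_n=i\beta_n y_n-f_n^2$ gives $\norm{U_n}_\HH^2=\int_\Omega|\nabla u_n|^2 dx+\int_\Omega|v_n|^2 dx+\int_\Omega|\nabla y_n|^2 dx+\int_\Omega|z_n|^2 dx=o(1)$, which contradicts \eqref{E1}. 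This contradiction proves \eqref{H2}, and Theorem \ref{ThexpE.2} follows from the Huang--Pr\"uss criterion \cite{Huang-85,Pruss-84}.
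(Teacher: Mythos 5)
Your proposal is correct and follows essentially the same route as the paper: multiply \eqref{E5} by $\overline u_n$ and \eqref{E4} by $\overline y_n$, integrate by parts, and absorb all remaining terms using \eqref{betau.2}, \eqref{betayO.2}, the $L^2$-smallness of $u_n$, $y_n$, the boundedness of $\beta_n u_n$, $\beta_n y_n$, and the convergence of the data, with the $\beta_n f_n^i$ terms paired against $\beta_n u_n$ or $\beta_n y_n$ exactly as the paper does implicitly. No discrepancies to report.
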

\begin{proof}
 Multiplying equation \eqref{E5} by $\overline u_n$, applying Green's formula and using the fact that $u_n=0$ on $\Gamma$, we get
\begin{equation}\label{Eau13}
	\int_{\Omega}|\beta_n u_n|^2dx -\int_{\Omega} |\nabla u_n|^2dx-i\int_{\Omega} \beta_n b y_n \overline u_ndx - i \int_{ \Omega} \beta_n c u_n \overline u_ndx = o(1).
\end{equation} 
Using the fact that $\beta_nu_n$, $\beta_ny_n$ are uniformly bounded in $L^2(\Omega)$, $\norm {u_n}=o(1) $  and  the estimation \eqref{betau.2}  in \eqref{Eau13}, we obtain    
\begin{equation}\label{Eau14}
	\int_{\Omega} |\nabla u_n|^2dx=o(1).
\end{equation}  
Similarly, multiplying equation \eqref{E4} by $\overline y_n$ and applying Green's formula and using the fact that $y_n=0$ on $\Gamma$, we get
\begin{equation}\label{Eau15}
	\int_{\Omega}|\beta_n y_n|^2dx -\int_{\Omega} |\nabla y_n|^2dx + i\int_{\Omega} \beta_n b u_n \overline y_ndx=o(1).
\end{equation}  
Using the fact that $\beta_nu_n$ is uniformly bounded in $L^2(\Omega)$, $\norm {y_n}=o(1) $  and \eqref{betayO.2} in \eqref{Eau15}, we obtain    
\begin{equation}\label{Eau16}
	\int_{\Omega} |\nabla y_n|^2dx=o(1).
\end{equation}  
The proof is thus complete.
{$\quad\square$}\end{proof}
\textbf{Proof of Theorem \ref{ThexpE.2}. }
It follows from \eqref{betau.2}, \eqref{betayO.2} and \eqref{graduyo.2}
that $\norm{U_n}_{\HH}=o(1)$ which is a contradiction with \eqref{E1}. Consequently, condition (H2) holds and the energy of system \eqref{e11.2}-\eqref{e13.2} decays exponentially to zero.
The proof is thus complete.{$\quad\square$}
\subsection{ Observability and exact controllability}
\label{S4.2}
First, we consider the following homogeneous system associated to \eqref{e11.2c}-\eqref{e13.2c} for $a=1$ by:
\begin{eqnarray}
\psi_{tt}-\Delta \psi + b(x)\varphi_t&=&  0  \hskip 1.4 cm \mbox{in} \,\,\, \Omega \times \rr_+,\label{eh11.2}\\
\varphi_{tt}- \Delta \varphi - b(x)\psi_t &=&  0  \hskip 1.4 cm \mbox{in}\,\,\, \Omega \times \rr_+,\label{eh12.2}\\
\psi = \varphi &=& 0 \hskip 1.4 cm \mbox{on} \,\, \,  \Gamma\times \rr_+,\label{eh13.2} \\
\psi(\cdot,0) = \psi_0, \psi_t (\cdot,0) = \psi_1, \varphi(\cdot,0) &=& \varphi_0, \varphi_t (\cdot,0) = \varphi_1 \hskip 1.4 cm \mbox{in} \,\, \,  \Omega.\label{eh13.2bis}
\end{eqnarray}
Let $ \Phi=(\psi,\psi_t, \varphi, \varphi_t)$ be a regular solution of system \eqref{eh11.2}-\eqref{eh13.2}, its associated total energy is given by:  
\begin{equation}
E(t)=\frac{1}{2} \int_\Omega \left(\abs{\psi_{t}}^2+\abs{\nabla \psi}^2+\abs{\varphi_{t}}^2+\abs{\nabla \varphi}^2\right) dx. \label{energy.2}
\end{equation}
A direct computation gives 
\begin{equation}
\frac{d}{dt}E(t)=0.
\end{equation}
Thus, system \eqref{eh11.2}-\eqref{eh13.2} is conservative in the sense that its energy $E(t)$ is constant. It is also wellposed and admits a unique solution in the energy space $\HH$.

Now, we establish the direct and indirect inequality given by the following theorem: 
\begin{theorem}\label{IO.2}
Let $a=1$. Assume that conditions {\rm{(LH1)}} and {\rm (LH2)} hold. Assume also that $\omega_{b} \subset \omega_{c_+}$  satisfies the geometric control condition $\rm{GCC}$ and that $b$, $c \in W^{1,\infty} (\Omega)$. Then there exists a time $T_0$ such that for all $T>T_0$, there exist two constants $M_1>0$, $M_2>0$ such that the solution of system \eqref{eh11.2}-\eqref{eh13.2} satisfies the following observability inequalities: 
\begin{equation}\label{IO}
M_1 \norm{\Phi_0}^2_{\HH} \leq \int_0^T \int_\Omega c(x) \, |\psi_t|^2dxdt \leq M_2 \norm{\Phi_0}^2_{\HH}, 
\end{equation}
for all $\Phi_0 = (\psi_0,\psi_1,\varphi_0, \varphi_1) \in \HH$. 
\end{theorem}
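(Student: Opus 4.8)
The plan is to deduce the two observability inequalities in \eqref{IO} from the exponential stability of the damped system \eqref{e11.2}-\eqref{e13.2} established in Theorem \ref{ThexpE.2}, exactly along the lines of the classical equivalence "exponential stability $\Leftrightarrow$ observability" used in the HUM method. The right-hand (direct) inequality $\int_0^T\int_\Omega c|\psi_t|^2\,dx\,dt\le M_2\|\Phi_0\|_\HH^2$ is the easy half: since $c\in W^{1,\infty}(\Omega)\subset L^\infty(\Omega)$ and the conservative system \eqref{eh11.2}-\eqref{eh13.2} is well posed with $E(t)=E(0)=\tfrac12\|\Phi_0\|_\HH^2$ constant, one bounds $\int_0^T\int_\Omega c|\psi_t|^2\le \|c\|_\infty\int_0^T\int_\Omega|\psi_t|^2\le 2\|c\|_\infty\, T\, E(0)$, which gives $M_2=2T\|c\|_\infty$. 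This requires only the conservation of energy, no geometry.

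For the left-hand (indirect observability) inequality, I would argue as follows. Let $\Phi=(\psi,\psi_t,\varphi,\varphi_t)$ solve the conservative system \eqref{eh11.2}-\eqref{eh13.2} with data $\Phi_0\in\HH$, and let $U=(u,u_t,y,y_t)$ solve the damped system \eqref{e11.2}-\eqref{e13.2} (with $a=1$) with the \emph{same} initial data $U(0)=\Phi_0$. Set $W=U-\Phi=(\theta,\theta_t,\chi,\chi_t)$. Subtracting the two systems, $W$ satisfies
\begin{equation*}
\theta_{tt}-\Delta\theta+b\chi_t=-c(x)u_t,\qquad \chi_{tt}-\Delta\chi-b\theta_t=0,\qquad \theta=\chi=0 \text{ on }\Gamma,
\end{equation*}
with zero initial data $W(0)=0$. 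Writing this as $W_t=\AA W+F$ with $F=(0,-c\,u_t,0,0)$ and using the variation of constants formula together with the contraction property of $(e^{t\AA})_{t\ge0}$, one gets $\|W(t)\|_\HH\le \int_0^t\|c\,u_t(s)\|_{L^2}\,ds\le \|c\|_\infty^{1/2}\int_0^t\big(\int_\Omega c|u_t|^2\big)^{1/2}ds$; combined with $-E_U'(t)=\int_\Omega c|u_t|^2$ (the dissipation identity recalled in Section \ref{section2}) and Cauchy--Schwarz in time, this yields $\|W(T)\|_\HH^2\le C_T\big(E_U(0)-E_U(T)\big)=C_T\int_0^T\int_\Omega c|u_t|^2\,dx\,dt$. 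On the other hand, by Theorem \ref{ThexpE.2}, $\tfrac12\|U(T)\|_\HH^2=E_U(T)\le Me^{-\theta T}E_U(0)=\tfrac12 Me^{-\theta T}\|\Phi_0\|_\HH^2$; fixing $T$ large enough that $Me^{-\theta T}\le \tfrac14$ (this fixes $T_0$) and using $\|\Phi_0\|_\HH=\|\Phi(T)\|_\HH\le\|W(T)\|_\HH+\|U(T)\|_\HH$, we obtain $\|\Phi_0\|_\HH\le\|W(T)\|_\HH+\tfrac12\|\Phi_0\|_\HH$, hence $\tfrac14\|\Phi_0\|_\HH^2\le\|W(T)\|_\HH^2\le C_T\int_0^T\int_\Omega c|u_t|^2\,dx\,dt$.

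The remaining, and genuinely delicate, step is to replace $u_t$ (the velocity of the damped system) by $\psi_t$ (the velocity of the conservative system) in the last integral. Writing $u_t=\psi_t+\theta_t$, it suffices to absorb $\int_0^T\int_\Omega c|\theta_t|^2$ into the left-hand side; since $\theta_t$ is controlled in $L^\infty_t L^2_x$ by $\|W(t)\|_\HH\le C_T(\int_0^T\int_\Omega c|u_t|^2)^{1/2}$, one has $\int_0^T\int_\Omega c|\theta_t|^2\le \|c\|_\infty T \sup_t\|W(t)\|_\HH^2\le C_T' \int_0^T\int_\Omega c|u_t|^2$, which is the wrong direction. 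To close the loop cleanly I would instead run the whole argument directly: estimate $\|W(T)\|_\HH^2\le C_T\int_0^T\int_\Omega c|u_t|^2$ and also $\int_0^T\int_\Omega c|u_t|^2\le 2\int_0^T\int_\Omega c|\psi_t|^2+2\int_0^T\int_\Omega c|\theta_t|^2$, then observe $\int_0^T\int_\Omega c|\theta_t|^2\le \|c\|_\infty T\sup_{[0,T]}\|W\|_\HH^2$; bootstrapping gives $\int_0^T\int_\Omega c|u_t|^2\le 2\int_0^T\int_\Omega c|\psi_t|^2 + \varepsilon_T\int_0^T\int_\Omega c|u_t|^2$ only if $\varepsilon_T<1$, which fails for large $T$. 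The robust fix, which is the standard one, is to \emph{not} subtract the two systems over $[0,T]$ but to use the telescoping/compactness--uniqueness method of Haraux (Proposition 2 of \cite{Haraux1989}, invoked in the paper's strategy): observability with $\psi_t$ is \emph{equivalent} to exponential stability, so once Theorem \ref{ThexpE.2} is in hand the inequality $M_1\|\Phi_0\|_\HH^2\le\int_0^T\int_\Omega c|\psi_t|^2$ follows from that abstract equivalence, with $T_0$ and $M_1$ furnished by the decay constants $M,\theta$. Thus the crux of the proof is the correct invocation of Haraux's lemma relating the damped and conservative dynamics; the energy estimates above are the mechanism behind that lemma and I expect the write-up to either cite \cite{Haraux1989} directly or reproduce its short argument.
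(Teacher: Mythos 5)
Your proposal lands exactly where the paper does: the direct inequality from conservation of the energy $E(t)=\tfrac12\|\Phi_0\|_{\HH}^2$, and the inverse inequality as a direct consequence of Proposition 2 of Haraux \cite{Haraux1989} applied to the exponential decay \eqref{ExpSta1.2} of Theorem \ref{ThexpE.2}, which is precisely the paper's (two-line) proof. Your intermediate attempt at a self-contained perturbation argument, whose bootstrap you correctly diagnose as failing for large $T$ before falling back on the citation, is honest exploration but plays no role in the final argument.
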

\begin{proof}
The direct inequality follows from the definition of the total energy for all $T>0$. While the proof of the inverse inequality is a direct consequence of Proposition 2 of Haraux in \cite{Haraux1989} for which the exponentially stability \eqref{ExpSta1.2} implies the existence of a time $T_0>0$ such that for all $T>T_0$ there exist two constants $M_1>0$ and $M_2>0$ such that \eqref{IO} holds. The proof is thus complete. 
{$\quad\square$}\end{proof}

%
%
Now, we are ready to study the exact controllability of a system \eqref{e11.2c}-\eqref{e13.2c} by using the HUM method.
First, thanks to the direct inequality, the solution of the system of equations \eqref{e11.2c}, \eqref{e12.2c}, \eqref{e13.2c} can be obtained as usual by the method of transposition (see \cite{Lions88} and \cite{Lions1988}).
Let $v_0 \in L^2(0,T;L^2(w_{c_+}))$, we choose the control
\begin{equation}
v(t)=-\frac{d}{dt}v_0(t) \in [H^1(0,T;L^2(\omega_{c_+}))]',
\end{equation}
where the derivative $\frac{d}{dt}$ is not taken in the sense of distributions but in the sense of the duality $H^1(0,T;L^2(\omega_{c_+}))$ and its dual $[H^1(0,T;L^2(\omega_{c_+}))]'$, i.e.,
$$-\int_0^T \frac{d}{dt} v_1(t) \mu (t) dt= \int_0^T v_1(t) \frac{d}{dt} \mu(t) dt, \quad \forall \mu \in H^1(0,T;L^2(\omega_{c_+}).$$ 
Then we have the followig result:
\begin{theorem}
	\label{t2}
	Let $T > 0$ and $a=1$. Assume that conditions {\rm{(LH1)}} and {\rm{(LH1)}} hold. Assume also that $\omega_{b} \subset \omega{c_+}$  satisfies the geometric control condition {\rm{GCC}} and that $b$, $c \in W^{1,\infty} (\Omega)$.
	 Given	$$U_0=(u_0,u_1,y_0,y_1)\,  \in  (L^2(\Omega) \times  H^{-1}(\Omega))^2, \, \, \, v = -\frac{d}{dt} v_0 \in [ H^1(0,T;L^2(\omega_{c_+}))]', $$  the controlled system \eqref{e11.2c}-\eqref{e13.2c}   has a unique weak solution
	$$U =(u,u_t,y,y_t)\in C^0 ( [ 0 , T ] , (L^2(\Omega) \times  H^{-1}(\Omega))^2).$$
\end{theorem}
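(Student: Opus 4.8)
The statement is a well-posedness result for the transposition (ultra-weak) solution of the controlled system \eqref{e11.2c}-\eqref{e13.2c}, with data $U_0\in(L^2\times H^{-1})^2$ and control $v=-\tfrac{d}{dt}v_0$ with $v_0\in L^2(0,T;L^2(\omega_{c_+}))$. The plan is to define the solution by duality against the backward homogeneous (adjoint) system and to use the direct observability/regularity inequality from Theorem \ref{IO.2} — more precisely its hidden-regularity content — to show that the linear functional defining $U$ is bounded on the relevant space of test data. This is the standard Lions transposition argument adapted to the coupled system.

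First I would set up the adjoint problem: for test data $\Theta_0=(\theta_0,\theta_1,\xi_0,\xi_1)\in\HH=(H_0^1\times L^2)^2$, let $\Theta=(\theta,\theta_t,\xi,\xi_t)$ solve the backward system \eqref{eh11.2}-\eqref{eh13.2} with terminal data at $t=T$ (the coupling being skew-symmetric, this backward problem is again conservative and well-posed in $\HH$, with $\|\Theta(t)\|_\HH=\|\Theta_0\|_\HH$). Multiplying \eqref{e11.2c}-\eqref{e12.2c} formally by $(\theta,\xi)$, integrating over $\Omega\times(0,T)$ and integrating by parts in space and time, one obtains the identity that a transposition solution $U=(u,u_t,y,y_t)$ must satisfy, namely an expression of the form
\begin{equation*}
\big\langle U(T),\Theta_0\big\rangle - \big\langle U_0,\Theta(0)\big\rangle = \int_0^T\!\!\int_{\omega_{c_+}} c(x)\,v\,\theta\,dx\,dt,
\end{equation*}
where the boundary terms vanish because $u=y=\theta=\xi=0$ on $\Gamma$, and the brackets denote the $(L^2\times H^{-1},H^1_0\times L^2)$ (and its mirror) duality pairings. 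Substituting $v=-\tfrac{d}{dt}v_0$ and integrating by parts in $t$ in the duality sense explained just before the theorem, the right-hand side becomes $\int_0^T\!\int_{\omega_{c_+}}c(x)\,v_0\,\theta_t\,dx\,dt$ plus possibly a term at $t=0,T$ handled by choosing test data/controls compatibly.

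The key point is then to show that the map
\begin{equation*}
\Theta_0\longmapsto \int_0^T\!\!\int_{\omega_{c_+}} c(x)\,v_0(t)\,\theta_t(t)\,dx\,dt
\end{equation*}
is a bounded linear functional on $\HH$. By Cauchy–Schwarz this is dominated by $\|v_0\|_{L^2(0,T;L^2(\omega_{c_+}))}\cdot\big(\int_0^T\!\int_\Omega c(x)|\theta_t|^2\big)^{1/2}$, and the last factor is exactly controlled by $M_2\|\Theta_0\|_\HH^2$ thanks to the direct inequality in \eqref{IO} of Theorem \ref{IO.2}. Hence the functional is bounded, and by the Riesz representation theorem (applied on $\HH$, or on the appropriate product of dual spaces) it defines a unique element, which we take as the value of $U$ paired against $\Theta_0$; letting the terminal time run over $[0,T]$ gives $U(t)$ for each $t$, and one checks $U\in C^0([0,T],(L^2\times H^{-1})^2)$ by the continuous dependence of $\Theta$ on its data together with density of smooth data. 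Uniqueness is immediate from the defining identity: the difference of two solutions pairs to zero against all $\Theta_0\in\HH$ at every time.

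The main obstacle, and the step deserving care, is the rigorous justification of the transposition identity and of the time-integration by parts when $v$ is only a distribution in $[H^1(0,T;L^2(\omega_{c_+}))]'$ and $U_0$ lives in the negative-order space $(L^2\times H^{-1})^2$: one must verify that all pairings make sense, track the regularity needed on $\theta_t$ restricted to $\omega_{c_+}$ (which is precisely the ``hidden'' interior regularity $\theta_t\in L^2(0,T;L^2(\omega_{c_+}))$ furnished by the right-hand inequality of \eqref{IO}), and confirm that the boundary/endpoint terms genuinely vanish or are absorbed. Once this bookkeeping is in place, the existence, uniqueness and continuity in time follow from the density argument and the a priori bound above, completing the proof.
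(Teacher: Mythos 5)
Your proposal is correct and follows essentially the same route as the paper: the Lions transposition method, in which the controlled equations are paired against solutions of the homogeneous adjoint system, the resulting linear functional is bounded via the direct (right-hand) inequality of \eqref{IO} after moving the time derivative in $v=-\tfrac{d}{dt}v_0$ onto $\psi$, and the weak solution is then produced by the Riesz representation theorem on $\HH'=(H^{-1}(\Omega)\times L^2(\Omega))^2$. Your write-up is in fact slightly more explicit than the paper's about the time-integration by parts and the endpoint bookkeeping, but the underlying argument is identical.
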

\begin{proof}
	Let $(\psi,\psi_t,\varphi,\varphi_t)$ be the solution of \eqref{eh11.2}-\eqref{eh13.2}  associated to  $\Phi_0 = (\psi_0,\psi_1,\varphi_0,\varphi_1)$. Multiplying the first equation of \eqref{e11.2c}-\eqref{e13.2c}  by $\psi$ and the second by $\varphi$ and integrating by parts, we obtain 
	\begin{equation}
	\left\lbrace
	\begin{array}{l}
	\displaystyle
	\int_\Omega y_t(T)\varphi(T)dx+\int_\Omega u_t(T)\psi(T) dx-
	\int_\Omega y(T)\varphi_t(T) dx \\-\displaystyle  \int_\Omega u(T)\psi_t(T) dx
	-\displaystyle\int_\Omega bu(T)\varphi(T) dx+\int_\Omega by(T )\psi(T)dx=
	\\
	\displaystyle
	\int_\Omega y_t(0)\varphi(0)dx+\int_\Omega u_t(0)\psi(0)dx
	-\int_\Omega\varphi_t(0)y(0)dx\\
	\displaystyle - \int_\Omega\psi_t(0)u(0)dx
	-\int_\Omega bu(0)\varphi(0)dx +\int_\Omega by(0)\psi(0)dx  + \int_{0}^{T} \int_\Omega c(x)v(t)\psi dx dt .\label{s123.6}
	\end{array}
	\right.
	\end{equation}
	Note that $\HH'=(H^{-1}(\Omega)\times L^2(\Omega))^2$. Then we have 
	\begin{equation}\left\lbrace
	\begin{array}{ll}
	\displaystyle \langle  \,(u_t(T,x),-u(T,x),y_t(T,x),-y(T,x)) , \Phi(T)  \,\rangle_{\HH'\times
		\HH} =\\
	\langle  (u_1,-u_0,y_1,-y_0) , \Phi_0 \rangle_{\HH'\times\HH}
	+\displaystyle \int_0^T\int_\Omega cv(t)\psi dxdt =L(\Phi_0).\label{0ecl2.6}	\end{array}
	\right.
	\end{equation}

	Using the direct observability inequality \eqref{IO}, we deduce that 
	\begin{equation}
	\displaystyle \prl L\prl_{ \LL(\HH,\rr)} \, \,\leq \hskip 0.1 cm
	\prl v_0\prl_{L^2(0,T;L^2(\omega_{c_+}))}+\prl U_0\prl_{\HH'}.
	\end{equation}
	Using the Riesz representation theorem, there exists an element 
	$\ZZ(x,t) \, \in\, \HH'$ solution of 
	\begin{equation}
	L(\Phi_0)= \,\langle \,\ZZ,\Phi_0 \,\rangle_{\HH' \times \HH}, \
	\ \forall \Phi_0\in {\HH}.
	\end{equation}
	Then, define the  weak solution $U(x,t)$ of system \eqref{e11.2c}-\eqref{e13.2c} by  $U(x,t)= \ZZ(x,t)$.
The proof is thus complete.	
{$\quad\square$}\end{proof}

Next, we consider the indirect locally internal exact controllability problem: For given $T > 0$ (sufficiently large) and initial data $U_0$, does there exists a suitable control $v$ that brings back the solution to equilibrium at time $T$, that is such the solution of \eqref{e11.2c}-\eqref{e13.2c} satisfies $ u(T)=u_t(T) =y(T)=y_t(T)=0$. Indeed, applying the HUM method, we obtain the following result. 
\begin{theorem}
Let $a=1$. Assume that conditions {\rm{(LH1)}} and {\rm{(LH2)} }hold. Assume also that $\omega_{b} \subset \omega_{c_+}$  satisfies the geometric control condition $\rm{GCC}$ and that $b$, $c \in W^{1,\infty} (\Omega)$.
	For every $ T > M_1$, where $ M_1$ is given in Theorem \ref{IO.2} and for every 
	$$U_{0} \in (L^{2}(\Omega) \times \\H^{-1}(\Omega))^{2},$$
	there exists a control 
	$$v(t) \in [H^1(0,T;L^2(\omega_{c_+}))]' ,$$
	such that the solution of the  controlled system \eqref{e11.2c}-\eqref{e13.2c} satisfies\\
	$$ u(T) =u_t(T)=y(T)=y_t(T)=0.$$
\end{theorem}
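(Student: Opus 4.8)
The plan is to run the standard Hilbert Uniqueness Method (HUM) of Lions, using as its engine the observability inequality \eqref{IO} established in Theorem \ref{IO.2}. First I would set up the dual (adjoint) picture: for initial data $\Phi_0=(\psi_0,\psi_1,\varphi_0,\varphi_1)\in\HH$, solve the conservative homogeneous system \eqref{eh11.2}-\eqref{eh13.2}, and define the control from the observed component by $v=-\frac{d}{dt}\big(c(x)\psi_t\big)$ acting on $\omega_{c_+}$ (equivalently take $v_0=c(x)\psi_t\in L^2(0,T;L^2(\omega_{c_+}))$ and $v=-\frac{d}{dt}v_0$ in the duality sense introduced just before Theorem \ref{t2}). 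With this choice one feeds $v$ into the controlled system \eqref{e11.2c}-\eqref{e13.2c}, whose well-posedness in $(L^2(\Omega)\times H^{-1}(\Omega))^2$ by transposition is exactly the content of Theorem \ref{t2}, and one reads off the solution at time $T$.

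Next I would define the HUM operator $\Lambda:\HH\to\HH'$ by $\Lambda\Phi_0 = \big(u_t(T),-u(T),y_t(T),-y(T)\big)$ where $(u,y)$ is the controlled solution driven by $v$ associated to $\Phi_0$ with zero initial data. The bilinear identity \eqref{s123.6}-\eqref{0ecl2.6} obtained by multiplying \eqref{e11.2c}-\eqref{e13.2c} by the adjoint solution and integrating by parts gives, after specializing to zero initial data for the controlled system,
\begin{equation*}
\langle \Lambda\Phi_0,\widetilde\Phi_0\rangle_{\HH'\times\HH} = \int_0^T\!\!\int_\Omega c(x)\,\psi_t\,\widetilde\psi_t\,dx\,dt,
\end{equation*}
which shows $\Lambda$ is bounded, symmetric, and, by the left-hand (inverse) inequality in \eqref{IO}, coercive: $\langle\Lambda\Phi_0,\Phi_0\rangle\geq M_1\|\Phi_0\|_\HH^2$ for all $T>T_0$. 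By Lax–Milgram $\Lambda$ is an isomorphism from $\HH$ onto $\HH'$. Given target data $U_0=(u_0,u_1,y_0,y_1)\in(L^2(\Omega)\times H^{-1}(\Omega))^2=\HH'$ (after the usual reflection/sign bookkeeping that turns the backward reachability problem at time $T$ into a problem at time $0$), I would solve $\Lambda\Phi_0 = \mathcal{G}(U_0)$ for the appropriate data $\mathcal{G}(U_0)\in\HH'$; the control $v$ built from this particular $\Phi_0$ then steers the system so that $u(T)=u_t(T)=y(T)=y_t(T)=0$. The regularity $v\in[H^1(0,T;L^2(\omega_{c_+}))]'$ is automatic since $v_0=c\psi_t\in L^2(0,T;L^2(\omega_{c_+}))$ and $c\in W^{1,\infty}(\Omega)$.

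The only genuine input beyond bookkeeping is the coercivity of $\Lambda$, i.e. the inverse observability inequality, and that is already granted by Theorem \ref{IO.2} (itself a consequence, via Haraux's Proposition 2 in \cite{Haraux1989}, of the exponential decay in Theorem \ref{ThexpE.2}). So the main obstacle in writing the proof cleanly is not analytic but organizational: one must be careful with the time-reversal of the conservative adjoint system (the energy space $\HH$ is preserved under $t\mapsto T-t$ for \eqref{eh11.2}-\eqref{eh13.2}), with the duality pairing $\HH'=(H^{-1}(\Omega)\times L^2(\Omega))^2$ against $\HH=(H^1_0(\Omega)\times L^2(\Omega))^2$, and with the precise sense in which $v=-\frac{d}{dt}v_0$ acts, so that the transposition solution of Theorem \ref{t2} and the identity \eqref{s123.6} are applied to matching data. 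Once the correspondence $U_0\leftrightarrow\Phi_0$ is fixed consistently, the conclusion $u(T)=u_t(T)=y(T)=y_t(T)=0$ follows immediately from $\Lambda\Phi_0=\mathcal G(U_0)$ and the uniqueness of the transposition solution. I would close by noting that $T>M_1$ in the statement should read $T>T_0$ with $T_0$ as in Theorem \ref{IO.2}; this is the threshold time coming from Haraux's result.
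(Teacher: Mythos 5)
Your proposal is correct and follows essentially the same route as the paper: the HUM method driven by the inverse observability inequality of Theorem \ref{IO.2}, with the operator $\Lambda$ shown to be coercive on $\HH$ and inverted by Lax--Milgram; the only difference is that you phrase $\Lambda$ via the forward controlled system with zero initial data evaluated at $T$, while the paper uses the equivalent backward problem \eqref{Retrograde} with zero final data evaluated at $0$. Your closing remark is also well taken: the threshold in the statement should indeed be the time $T_0$ from Theorem \ref{IO.2} rather than the observability constant $M_1$.
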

\begin{proof}
	We will apply the HUM method. Thanks to the indirect observability inequalities \eqref{IO}, we consider the seminorm defined by
	$$ \norm{\Phi_{0}}^2_{\HH}= \int_0^T\int_{\omega_{b}}|\psi_t|^2dxdt,$$
	where $\Phi=( \psi, \psi_{t},\varphi, \varphi_{t})$ designate the solution of the homogeneous problem \eqref{eh11.2}-\eqref{eh13.2}.\\
	Take the control $v=\frac{d}{dt} \psi_{t}$.
	Now, we solve the following time reverse problem:
	\begin{equation}
	\left\{
	\begin{matrix}
	\zeta_{tt}- \Delta \zeta +b \chi_{t} & = &  c \frac{d}{dt}\psi _{t} & {\rm in}&  (0,T) \times \Omega,\\
	\chi_{tt}- \Delta \chi - b \zeta_{t} &=&  0 & {\rm in} & (0,T)\times \Omega,\\
	\chi(T)=\chi_t(T)=\zeta(T)=\zeta_t(T)&= &0. \label{Retrograde}
	\end{matrix}
	\right.
	\end{equation}
	By Theorem \ref{t2}, the system \eqref{Retrograde} admits a solution 
	$$ \Psi(x,t)= ( \zeta, \zeta_{t},\chi,\chi_{t},) \in C^0([0,T], H').$$
	We define the linear operator $\Lambda$ by:
	$$\Lambda :\HH = (H_{0}^1(\Omega) \times L^{2}(\Omega))^2 \rightarrow (H^{-1}(\Omega)\times L^2(\Omega) )^{2},$$
	where
	$$\Lambda \Phi_0 = (\zeta_t(0),-\zeta(0),\chi_t(0),-\chi(0)) \, \hskip 1cm \, \forall \,\, \Phi_{0} \in (H_0^1(\Omega)\times L^2(\Omega)).$$
	In addition, we define the following linear form
	\begin{equation}
	\langle\Lambda \Phi_0,\tilde{\Phi_0} \rangle=\int_0^T \int_{\omega_c} \psi_t \tilde{\psi}_t dxdt =(\Phi _0, \tilde{\Phi}_0) _\HH, \ \ \quad\forall \, \, \tilde{\Phi}_0\in\HH,
	\label{Crochet}
	\end{equation}
	where $(.,.)_\HH$ is the scalar product associated to the norm $\norm{.}_\HH$.
	
	Using Cauchy-Schwarz in \eqref{Crochet} , we deduce that
	\begin{equation}
	|\langle\Lambda \Phi _{0}, \tilde{\Phi_{0}} \rangle _{\HH \times \HH'}| \leqslant \norm{\Phi_{0} }_\HH  \norm{\tilde{\Phi_0}}_\HH, \quad   \forall \,  \Phi_{0}, \tilde{\Phi}_{0}\in \HH.
	\label{1e3595}
	\end{equation}
	In particular, we have
	$$ |\langle\Lambda \Phi _{0},{\Phi_{0}} \rangle _{\HH \times \HH'}| = \norm{\Phi_{0} }_\HH ^2 \quad \quad \forall \, \Phi_0 \in \HH.  $$
	
	Then the inverse inequality in Theorem \ref{IO.2} implies that the operator $\Lambda$  is coercive and continuous over $\HH$. Thanks to the Lax-Milgram theorem, we have $\Lambda $ is an isomorphism from $\HH$ into $\HH'$ .	In particular, for every $U_0 \in (L^2(\Omega)\times H^{-1}(\Omega))^2$, there exists a solution $\Phi_0 \in \HH$, such that\\
	$$\Lambda (\Phi_0)=-U_0=(\zeta_t(0),-\zeta(0),\chi_t(0),-\chi(0)). $$	
It follows from the uniqueness of  the solution of problem \eqref{Retrograde} that
	$$U=\Psi.$$
	Consequently, we have 
	$$ u(T)=u_{t}(T)=y(T)=y_{t}(T)=0.$$
	The proof is thus complete.
{$\quad\square$}\end{proof}
\section{ Exponential stability and exact controllability in the case $a \neq 1$}
\label{section4}
\subsection{Exponential stability in the weak energy space}
The aim of this subsection is to show the exponential stability of system \eqref{e11.2}-\eqref{e13.2} in a weak energy space in the case when the waves do not propagate with same speed, i.e., $a\neq1$. For this sake, we define the weak energy space
$$D=H_0^1(\Omega) \times L^2(\Omega) \times L^2(\Omega) \times H^{-1}(\Omega)$$
equipped with the scalar product : for all $U=(u,v,y,z) \in D$ and $\tilde U=(\tilde u,\tilde v,\tilde y, \tilde z) \in D $,
$$ (U, \tilde U)=\int_{\Omega}(a\nabla u. \nabla \tilde u + v \tilde v+ y\tilde y+(-\Delta)^{-1/2}z(-\Delta)^{-1/2}\tilde z)dx.$$

Next, we define the unbounded linear operator $\AA_d:D(\AA_d) \subset D \rightarrow D$ by
$$\AA_dU
=(\,v , a \Delta u - b z- c v, \,z,\Delta y+b v\,),
$$
$$D(\AA_d)=\big((H_0^1(\Omega)\cap H^2(\Omega)) \times H_0^1(\Omega) \times H_0^1(\Omega) \times L^2(\Omega) \big), \hskip 1 cm \forall\, \,  U\,=\,(u ,v, y , z)\,
\in\,D(\AA_d).
$$
We define the partial energy associated to a solution $U=(u ,u_t, y , y_t)$ of \eqref{e11.2}-\eqref{e13.2} by 
$$e_1(t)=\frac{1}{2}\big(a \norm{\nabla u}^2_{L^2(\Omega)}+ \norm{u_t}^2_{L^2(\Omega)}\big). $$
We define also the weakened partial energy by
$$\tilde e_2(t)= \frac{1}{2}\big(\norm{y_t}_{H^{-1}(\Omega)}^{2}+ \norm{y}^2_{L^2(\Omega)}\big)$$
and the total mixed energy by 
$$E_m(t)=e_1(t)+ \tilde e_2(t).$$

In order to study the exponential decay rate, we need to assume that $\omega_{c_+}$ satisfies the geometric conditions PMGC. Then there exist $\varepsilon>0$, subsets $\Omega_j\subset \Omega$, $j=1,...,J$, with Lipschitz boundary $\Gamma_j=\partial \Omega_j$ and points $x_j\in \rr^N$ such that $\Omega_i\cap\Omega_j =\emptyset$ if $i\not= j$ and $\omega_c^{+} \supset \NN_\epsilon \left( \displaystyle\cup _{j=1}^{J} \gamma_j\left(x_j\right) \cup \left( \Omega \setminus \displaystyle \cup _{j=1}^{J} \Omega_j \right)\right) \cap \Omega$ with $\NN_\epsilon(\OO)  = \{x \in \mathbb{R}^N: d(x,\OO)< \varepsilon \}$ where $\OO\subset \rr^N$, $\gamma_j(x_j) = \{ x \in  \Gamma_j:(x-x_j)\cdot \nu_j(x) > 0 \}$ where $\nu_j$ is the outward unit normal vector to $ \Gamma_j$ and that
 $\omega_b$  satisfies the GCC condition and  $$\omega_{b}\subset \left( \Omega \setminus \displaystyle \cup _{j=1}^{J} \Omega_j \right). \quad \quad \quad \rm(LH3)$$
Now, we are ready to establish the following main theorem of this section:
\begin{theorem}\label{ThexpE} (Exponential decay rate) 
Let $a\neq1$. Assume that conditions {\rm (LH1)} and {\rm (LH2)} hold. Assume also that $\omega_{c_+}$  satisfies the geometric conditions  {\rm PMGC}, $\omega_b$ satisfies {\rm{GCC}} condition and {\rm (LH3)} and $b,\, c \in L^\infty(\Omega)$. Then there exist positive constants $M\geq 1$, $\theta >0$
	such that for all initial data $(u_0,u_1,y_0,y_1)\in D$ the energy
	of system (\ref{e11.2})-(\ref{e13.2}) satisfies the following decay
	rate:
	\begin{equation}
	E_m(t)\leq Me^{-\theta t}E_m(0),\ \ \ \ \forall t>0.\label{ExpSta.2}
	\end{equation}
\end{theorem}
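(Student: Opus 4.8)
\textbf{Plan of proof for Theorem \ref{ThexpE}.}
The strategy mirrors the case $a=1$: we verify the Huang--Pr\"uss frequency-domain criterion \eqref{H1}--\eqref{H2} for the operator $\AA_d$ on the weak energy space $D$. First I would check that $i\rr\subset\rho(\AA_d)$ and $0\in\rho(\AA_d)$; since $\AA_d$ has compact resolvent and the system is strongly stable (by the same argument as in Section \ref{section2}, transported to $D$), \eqref{H1} follows. The bulk of the work is \eqref{H2}, proved by contradiction: suppose there exist $\beta_n\to+\infty$ and $U_n=(u_n,v_n,y_n,z_n)\in D(\AA_d)$ with $\norm{U_n}_D=1$ and $(i\beta_n-\AA_d)U_n\to 0$ in $D$. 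Writing out the four resolvent equations, eliminating $v_n$ and $z_n$, and using the dissipation identity $\mathrm{Re}\left\{i\beta_n\norm{U_n}_D^2-(\AA_dU_n,U_n)_D\right\}=\int_\Omega c|v_n|^2dx=o(1)$, one gets $\int_{\omega_{c_+}}|v_n|^2dx=o(1)$ and hence $\int_{\omega_{c_+}}|\beta_n u_n|^2dx=o(1)$, exactly as in the first two lemmas of Section \ref{section3}. Here one must be careful that $y_n$ and $z_n$ now live only in $L^2(\Omega)$ and $H^{-1}(\Omega)$ respectively, so all the multiplier identities involving the second equation have to be read in the appropriate duality, typically after applying $(-\Delta)^{-1}$.

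The core of the argument is a two-region localization. On the damping region, I would first propagate the information $\int_{\omega_{c_+}}|\nabla u_n|^2dx=o(1)$ (multiply \eqref{E5}-analogue by $c\overline u_n$), and then use the PMGC hypothesis on $\omega_{c_+}$ together with a piecewise-multiplier computation in the spirit of Liu \cite{Liu1997} and the Alabau/Kassem arguments to obtain $e_1$-type estimates, i.e.\ $\int_\Omega|\nabla u_n|^2dx+\int_\Omega|\beta_n u_n|^2dx=o(1)$, for the first (directly damped) component. This is the step where PMGC, rather than GCC, is genuinely needed for the $u$-component, because the transfer of energy uses multipliers $(x-x_j)\cdot\nabla u_n$ adapted to the pieces $\Omega_j$. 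Next, on $\omega_b$ I would exploit the coupling: multiplying the $y$-equation by a test function built from $u_n$ (and conversely), as in the lemmas leading to \eqref{equation4.2}, one relates $\int_{\omega_b}|\nabla y_n|^2$ and $\int_{\omega_b}|\beta_n y_n|^2$ to already-controlled $u_n$-quantities, using $\omega_b\subset\Omega\setminus\cup_j\Omega_j$ from (LH3) to stay in the region where the piecewise multipliers are trivial. Since $\omega_b$ satisfies GCC, an auxiliary-system estimate of the type of Lemma \ref{auxiliary.2} — now for the damped wave equation with damping supported on $\omega_b$, whose semigroup is exponentially stable by Bardos--Lebeau--Rauch \cite{Bardos-Lebeau-Rauch} — upgrades the local bound on $y_n$ near $\omega_b$ to the global bound $\int_\Omega(|\beta_n y_n|^2+|\nabla y_n|^2)dx=o(1)$, which in the weak norm means $\tilde e_2\to 0$.

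Combining the $e_1$ and $\tilde e_2$ estimates gives $\norm{U_n}_D=o(1)$, contradicting $\norm{U_n}_D=1$; hence \eqref{H2} holds and \eqref{ExpSta.2} follows from Huang \cite{Huang-85} and Pr\"uss \cite{Pruss-84}. The main obstacle, I expect, is the bookkeeping forced by the mismatched speeds and the weak topology: because $a\neq 1$ the cross terms $\int_\Omega\beta_n(\nabla u_n\cdot\nabla\overline y_n)dx$ no longer cancel cleanly against a matching term from the other equation (as they did via \eqref{nablayy.2}--\eqref{nablay3.2} when $a=1$), so one cannot simply subtract the two multiplier identities; instead the $u$-estimates must be established \emph{first and unconditionally} from the damping plus PMGC, and only then fed into the $y$-equation. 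Keeping track of which quantities are $O(1)$ versus $o(1)$ in the $H^{-1}$-weighted norm, and ensuring the auxiliary damped-wave resolvent bound is applied with the correct right-hand side (an $L^2$ source coming from $\beta_n^2$ times a regularized $u_n$ or $y_n$), is the delicate part; the geometric input itself is just PMGC for $c$ and GCC for $b$ with the separation condition (LH3).
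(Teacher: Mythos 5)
Your overall architecture is the right one and matches the paper: Huang--Pr\"uss on $D$, contradiction with $\norm{U_n}_D=1$, the dissipation identity giving $\int_{\omega_{c_+}}|\beta_n u_n|^2dx=o(1)$, localized gradient estimates for $u_n$ on the multiplier neighborhood, a piecewise multiplier computation with $h_j=\psi_j(x-x_j)$ in which the coupling term $\beta_n b y_n$ is killed by (LH3) rather than estimated, and an auxiliary damped wave problem on $\omega_b$ (Lemma \ref{auxiliary.2}) to globalize the information on $y_n$. Your remark that the $u$-estimates must be obtained independently of $y_n$ because the cross terms $\int_\Omega\beta_n(\nabla u_n\cdot\nabla\overline y_n)dx$ no longer cancel when $a\neq 1$ is exactly the right diagnosis.

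However, your treatment of the $y$-component contains a genuine gap. You propose to control $\int_{\omega_b}|\nabla y_n|^2dx$ and $\int_{\omega_b}|\beta_n y_n|^2dx$ and then to upgrade, via the auxiliary problem, to the \emph{global strong} bound $\int_\Omega(|\beta_n y_n|^2+|\nabla y_n|^2)dx=o(1)$. In the weak space $D$ these quantities are not even bounded: from $\norm{U_n}_D=1$ one only knows $y_n$ bounded in $L^2(\Omega)$ and $z_n\simeq i\beta_n y_n$ bounded in $H^{-1}(\Omega)$, and the preliminary step of the paper shows only $\norm{\beta_n^{-1}\nabla y_n}_{L^2(\Omega)}=O(1)$. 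So the estimates you aim for cannot hold and are not what is needed; the correct targets are $\int_\Omega|y_n|^2dx=o(1)$ and $\norm{\beta_n y_n}_{H^{-1}(\Omega)}^2=\int_\Omega|\beta_n(-\Delta)^{-1/2}y_n|^2dx=o(1)$. Moreover the mechanism you invoke to get the local $y$-information --- the pair of identities leading to \eqref{equation4.2} --- is precisely the one you yourself note breaks for $a\neq 1$. The paper's actual device is different: multiply the $u$-equation \eqref{E5.2} by $\hat\eta\,\beta_n^{-1}\overline y_n$, so that the coupling term produces $\int_\Omega b\,\hat\eta\,|y_n|^2dx$ directly (a zero-order quantity adapted to $D$), all other terms being $o(1)$ by the local $u$-estimates and the $O(1)$ bound on $\beta_n^{-1}\nabla y_n$; then (LH3) gives $\int_{\omega_b}|y_n|^2dx=o(1)$, the auxiliary $\phi_n$ with source $y_n$ yields $\int_\Omega|y_n|^2dx=o(1)$, and a further multiplication of the $y$-equation by $(-\Delta)^{-1}\overline y_n$ --- a step absent from your plan --- yields the $H^{-1}$ decay of $z_n$. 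Two smaller points: since here only $b,c\in L^\infty(\Omega)$, the multiplier $c\overline u_n$ you propose for the gradient estimate is not admissible (it produces $\nabla c$); one must use a smooth cutoff $\hat\eta$ supported in $\omega_{c_+}$ instead. And the vanishing of the coupling term in the piecewise multiplier identity requires combining (LH3) with the localization of $b$ on $\omega_b$, which deserves to be stated explicitly.
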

{In order to prove the above theorem, we apply the same strategy 
 using  Huang \cite{Huang-85} and Pr\"uss \cite{Pruss-84}.}
A $C_0$- semigroup of contraction $(e^{t\AA})_{t\geqslant 0}$ in a Hilbert space $\HH$ is exponentially stable if and only if
\begin{align}
i \rr \subseteq \rho(\AA_d)   \quad \mbox{ and } &\label{H1}\tag{H1}  \\
\limsup_{\beta \, \in \rr, |\beta| \rightarrow + \infty}\parallel (i \beta I - \AA_d )^{-1} \parallel_{\mathcal{L}(D)} <\infty&\label{H2}\tag{H2} 
\end{align}

Condition \eqref{H1} was already proved. We now prove that condition \eqref{H2} holds, using an argument of contradiction. For this aim, we suppose that there exist a  real sequence $\beta_n $ with $\beta_n \rightarrow +\infty$ and a sequence $U_n=(u_n,v_n,y_n,z_n) \in D(\AA_d)$ such that 	
\begin{align}
\parallel U_n \parallel_{D} &= 1 \quad \mbox{ and } \label{E1.2} \\
\lim_{n\rightarrow\infty}\parallel  (i \beta_n I -
\AA_d) U_n\parallel_{D} &= 0 \ .\label{E2.2}
\end{align}
Next, detailing equation (\ref{E2.2}), we get
\begin{eqnarray}
i\beta_n u_n-v_n &=& f_n^1 \,\,\rightarrow \,0 \hskip 0.5 cm \mbox{in}\hskip 0.5 cm H_0^1(\Omega),\label{e46.2}\\
i\beta_n v_n- a\Delta u_n +b z_n + cv_n&=& g_n^1 \,\,\rightarrow \,0 \hskip 0.5 cm \mbox{in}\hskip 0.5 cm L^2(\Omega),\label{e47.2}\\
i\beta_n y_n-z_n &=& f_n^2 \,\,\rightarrow \,0 \hskip 0.5 cm \mbox{in}\hskip 0.5 cm L^2(\Omega),\label{e48.2}\\
i\beta_n z_n- \Delta y_n - b v_n  &=& g_n^2 \,\,\rightarrow
\,0 \hskip 0.5 cm \mbox{in}\hskip 0.5 cm H^{-1}(\Omega).\label{e49.2}
\end{eqnarray}
Eliminating $v_n$ and $z_n$ from the previous system, we obtain the following system
\begin{equation}
\beta_n^2 u_n +a \Delta
u_n-i\beta_n b y_n-i\beta_n c u_n=-g_n^1-bf_n^2
-i\beta_nf_n^1-cf_n^1  \hskip 0.5 cm \mbox{in}\hskip 0.5 cm L^2(\Omega), \label{E5.2}
\end{equation}
\begin{equation}
\beta_n^2 y_n + \Delta y_n +i \beta_n b u_n = -i \beta_n
f_n^2 + b f_n^1 -g_n^2 \hskip 0.5 cm \mbox{in}\hskip 0.5 cm H^{-1}(\Omega). \label{E4.2}
\end{equation}
From \eqref{E1.2}, we have $\nabla u_n$, $v_n$ and $y_n$ are uniformly bounded in $L^2(\Omega)$ and $ z_n$ is uniformly bounded in $H^{-1}(\Omega)$. 
 Using now \eqref{E1.2} and \eqref{e46.2}, we deduce that $\beta_n u_n$ is uniformly bounded in $L^2(\Omega)$. In addition, using \eqref{E1.2} and \eqref{e48.2}, we deduce that $\beta_n y_n$  is uniformly bounded in $H^{-1}(\Omega)$. More precisely,  
 $$\norm{u_n}_{L^2(\Omega)}=\frac {O(1)}{\beta_n}=o(1) \quad {\rm{and}} \quad  \norm{y_n}_{H^{-1}(\Omega)}=\frac {O(1)}{\beta_n}=o(1).$$ 
\begin{lemma}
The solution $(u_n,v_n,y_n,z_n) \in D(\AA_d)$ of system (\ref{e46.2})-(\ref{e49.2}) satisfies the following estimates
\begin{equation}
	\displaystyle \int_\Omega c |\beta_n u_n|^2dx=o(1) \quad and \quad \int_{\omega_{c_+}} |\beta_nu_n|^2 dx = o(1).\label{E33.2}
\end{equation}
\end{lemma}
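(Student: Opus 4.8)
The plan is to mimic the very first lemma of the $a=1$ case, adapting the dissipation identity to the weak energy space $D$. First I would exploit the contradiction hypothesis: since $U_n$ is uniformly bounded in $D$ and $(i\beta_n I - \AA_d)U_n \to 0$ in $D$, taking the real part of $\langle (i\beta_n I - \AA_d)U_n, U_n\rangle_D$ kills the skew-symmetric part and leaves only the dissipative term. Here the key point is that the dissipation of $\AA_d$ still comes solely from the $c(x)|v_n|^2$ term (the extra coupling terms $-bz_n$ and $+bv_n$ are skew-adjoint in the inner products chosen for $D$, because the $(-\Delta)^{-1/2}$ weighting on the last slot is exactly what makes $b v_n$ and $-bz_n$ cancel in the bilinear form). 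So I expect
\begin{equation*}
\int_\Omega c(x)|v_n|^2\,dx = \mathrm{Re}\left\{i\beta_n\|U_n\|_D^2 - (\AA_d U_n, U_n)_D\right\} = o(1).
\end{equation*}

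Next, using condition (LH1), which guarantees $c(x) \ge c_0 > 0$ on the open set $\omega_{c_+}$, I would immediately deduce $\int_{\omega_{c_+}}|v_n|^2\,dx = o(1)$. Then I would substitute $v_n = i\beta_n u_n - f_n^1$ from \eqref{e46.2}: since $f_n^1 \to 0$ in $H_0^1(\Omega)$ hence in $L^2(\Omega)$, and $\beta_n u_n$ is uniformly bounded in $L^2(\Omega)$ (established just above the lemma), we get $c(x)|\beta_n u_n|^2 = c(x)|v_n + f_n^1|^2$, and expanding the square, the cross term $c(x)\,\mathrm{Re}(v_n \overline{f_n^1})$ is $o(1)$ by Cauchy--Schwarz while $c(x)|f_n^1|^2 = o(1)$; therefore $\int_\Omega c(x)|\beta_n u_n|^2\,dx = o(1)$. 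Restricting the integral to $\omega_{c_+}$ and using $c(x) \ge c_0$ there gives $\int_{\omega_{c_+}}|\beta_n u_n|^2\,dx = o(1)$, which is the second assertion.

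I do not anticipate any real obstacle here: this lemma is essentially identical to its $a=1$ counterpart, the only thing to check carefully being that the modified inner product on $D$ (with the $(-\Delta)^{-1/2}$ weight on the $z$-component) still makes $\AA_d$ dissipative with dissipation exactly $\int_\Omega c|v_n|^2$, i.e. that the coupling terms $\langle -bz_n, v_n\rangle_{L^2}$ and $\langle bv_n, z_n\rangle$ in the weak norm cancel. This is a short computation: $(\AA_d U, U)_D$ produces $-\int_\Omega b z_n \overline{v_n}\,dx$ from the second slot and $\int_\Omega (-\Delta)^{-1/2}(\Delta y_n + b v_n)(-\Delta)^{-1/2}\overline{z_n}\,dx$ from the fourth; the term $\int (-\Delta)^{-1/2}(bv_n)(-\Delta)^{-1/2}\overline{z_n}$ is not literally the conjugate of $\int b z_n\overline{v_n}$, so one should be slightly careful, but taking the real part and using that $z_n \in H_0^1 \subset L^2$ on $D(\AA_d)$ these pair up correctly after using $\langle (-\Delta)^{-1/2}g,(-\Delta)^{-1/2}h\rangle = \langle (-\Delta)^{-1}g, h\rangle$ and self-adjointness of $(-\Delta)^{-1}$. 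So the genuinely new work in the $a\neq1$ case lies in the later lemmas (controlling $\nabla u_n$, $y_n$, $\beta_n y_n$ in the weaker topology, where the PMGC/piecewise-multiplier machinery enters), not in this one.
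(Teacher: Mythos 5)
Your proposal follows exactly the paper's route: take the real part of $((i\beta_n I-\AA_d)U_n,U_n)_D$ to isolate $\int_\Omega c|v_n|^2dx=o(1)$, then substitute $v_n=i\beta_n u_n-f_n^1$ from \eqref{e46.2} and invoke (LH1); the order in which you localize to $\omega_{c_+}$ versus pass from $v_n$ to $\beta_n u_n$ is immaterial. The one place where you are \emph{more} careful than the printed proof is also the one place where your justification does not actually close: the coupling contributions to $\mathrm{Re}(\AA_d U_n,U_n)_D$ are $-\mathrm{Re}\int_\Omega b\,z_n\overline{v_n}\,dx$ from the second slot and $+\mathrm{Re}\int_\Omega (-\Delta)^{-1}(bv_n)\,\overline{z_n}\,dx$ from the fourth, and after using self-adjointness of $(-\Delta)^{-1}$ their sum is $\mathrm{Re}\int_\Omega b\,v_n\,\overline{\left((-\Delta)^{-1}-I\right)z_n}\,dx$, which is \emph{not} identically zero — the $(-\Delta)^{-1/2}$ weighting makes the $y$-block skew terms cancel, but it does not symmetrize the $b$-coupling between the second and fourth slots. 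So the exact dissipation identity $\mathrm{Re}\{i\beta_n\|U_n\|_D^2-(\AA_d U_n,U_n)_D\}=\int_\Omega c|v_n|^2dx$ requires a further argument in the weak space $D$ (unlike in $\HH$, where it is immediate). To be fair, the paper's own equation \eqref{E3.2} asserts this identity without comment, so your proposal is no weaker than the printed proof; but the "short computation" you defer to is precisely the step that fails as stated, and you should either control the residual term $\mathrm{Re}\int_\Omega b\,v_n\,\overline{\left((-\Delta)^{-1}-I\right)z_n}\,dx$ separately or modify the argument.
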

\begin{proof}
	First, since $U_n$ is uniformly bounded in $D$ and using \eqref{E2.2}, we get 
	\begin{equation}
	\mathrm{Re}\left\{i\beta_n \parallel U_n \parallel^2 - (\AA_d U_n,
	U_n)\right\}= \int_{\Omega}c(x)|v_n|^2 dx=o(1).\label{E3.2}
	\end{equation}

	Next, using equations \eqref{E3.2} and \eqref{e46.2},  we get
	\begin{equation}
	\int_{\Omega}c|\beta_n u_n|^2 dx =o(1).
	\end{equation}
Under condition (LH1), it follows
	\begin{equation*}
	\int_{\omega_{c_+}}|\beta_n u_n|^2 dx =o(1). 
	\end{equation*}
	The proof is thus complete.
{$\quad\square$}\end{proof}
Now as $\omega_{c^+}$ satisfies the PMGC condition, let the reals $0 < \varepsilon_1 < \varepsilon_2 <\varepsilon$ and define   
$$Q_ {i}= \NN_{\varepsilon_i}  \left( \displaystyle\cup _{j=1}^{J} \gamma_j\left(x_j\right) \cup \left( \Omega \setminus \displaystyle \cup _{j=1}^{J} \Omega_j \right)\right), \quad i=1,2.$$
Since $\overline { \mathbb{R}^N\setminus \omega_{c^+}} \cap\overline Q_2 =\emptyset$, we can construct a function $\hat{\eta} \in C_0^\infty(\Omega)$ such that
\begin{equation*}
\hat{\eta}(x)=0 \hskip 0.5cm \mbox{if}\,\, \,x \in \Omega\setminus \omega_{c_+}, \quad
0\leq \hat \eta(x) \leq 1, \quad
\hat{\eta}(x)=1\hskip 0.5cm \mbox{if} \, \,\,\,x \in  Q_{2}.
\label{FT}
\end{equation*}
\begin{lemma}
The solution $(u_n,v_n,y_n,z_n) \in D(\AA_d)$ of system (\ref{e46.2})-(\ref{e49.2}) satisfies the following estimates
\begin{equation}
\int_{\Omega} \hat{\eta} \mid\nabla u_n\mid^2dx=o(1) \hskip 0.2 cm \mbox{and} \hskip 0.2 cm
\int_{Q_2 \cap \Omega} \mid\nabla u_n\mid^2 dx
=o(1). \label{E10.2}
\end{equation} 	
\end{lemma}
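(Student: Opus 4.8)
The plan is to run the standard multiplier argument adapted to the cut-off function $\hat{\eta}$. First I would multiply equation \eqref{E5.2} by $\hat{\eta}\,\overline{u_n}$ and integrate over $\Omega$. Since $\hat{\eta}\in C_0^\infty(\Omega)$, Green's formula produces no boundary term, and after transferring the Dirichlet energy of $u_n$ to the left-hand side one obtains the identity
\begin{multline*}
a\int_\Omega \hat{\eta}\,|\nabla u_n|^2\,dx = \int_\Omega \hat{\eta}\,|\beta_n u_n|^2\,dx - a\int_\Omega (\nabla\hat{\eta}\cdot\nabla u_n)\,\overline{u_n}\,dx - i\beta_n\int_\Omega b\,\hat{\eta}\,y_n\,\overline{u_n}\,dx \\ - i\beta_n\int_\Omega c\,\hat{\eta}\,|u_n|^2\,dx + \int_\Omega \bigl(g_n^1 + b f_n^2 + i\beta_n f_n^1 + c f_n^1\bigr)\hat{\eta}\,\overline{u_n}\,dx .
\end{multline*}
It then suffices to prove that every term on the right-hand side is $o(1)$.

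The first term is controlled directly by \eqref{E33.2}: since $0\le\hat{\eta}\le 1$ and $\operatorname{supp}\hat{\eta}\subset\omega_{c_+}$, we have $\int_\Omega\hat{\eta}\,|\beta_n u_n|^2\,dx\le\int_{\omega_{c_+}}|\beta_n u_n|^2\,dx=o(1)$. The term carrying $\nabla\hat{\eta}$ is $o(1)$ because $\nabla\hat{\eta}\in L^\infty(\Omega)$, $\nabla u_n$ is bounded in $L^2(\Omega)$ and $\norm{u_n}_{L^2(\Omega)}=o(1)$. For $i\beta_n\int_\Omega c\,\hat{\eta}\,|u_n|^2\,dx$ one uses that \eqref{E33.2} gives $\int_\Omega c\,|u_n|^2\,dx=\beta_n^{-2}\,o(1)$, so even after multiplying by $\beta_n$ the contribution is $o(1)$. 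Finally the source term is $o(1)$ since $g_n^1\to 0$ in $L^2(\Omega)$, $f_n^2\to 0$ in $L^2(\Omega)$, $f_n^1\to 0$ in $H_0^1(\Omega)$ (hence in $L^2(\Omega)$ by Poincar\'e), while $b,c,\hat{\eta}\in L^\infty(\Omega)$ and both $\overline{u_n}$ and $\beta_n\overline{u_n}$ are bounded in $L^2(\Omega)$.

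The only delicate point — and the main obstacle — is the coupling term $-i\beta_n\int_\Omega b\,\hat{\eta}\,y_n\,\overline{u_n}\,dx$: a crude bound using merely the $L^2(\Omega)$-boundedness of $y_n$ and of $\beta_n u_n$ gives only $O(1)$. The key observation is that $\hat{\eta}$ is supported in $\omega_{c_+}$, and there \eqref{E33.2} supplies $\norm{\beta_n u_n}_{L^2(\omega_{c_+})}=o(1)$; combined with $b,\hat{\eta}\in L^\infty(\Omega)$ and the $L^2$-boundedness of $y_n$, Cauchy--Schwarz makes this term $o(1)$. Collecting all the estimates yields $a\int_\Omega\hat{\eta}\,|\nabla u_n|^2\,dx=o(1)$, and since $a>0$ this gives the first assertion; the second is immediate because $\hat{\eta}\equiv 1$ on $Q_2$, so $\int_{Q_2\cap\Omega}|\nabla u_n|^2\,dx\le\int_\Omega\hat{\eta}\,|\nabla u_n|^2\,dx=o(1)$.
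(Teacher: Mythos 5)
Your proof is correct and follows essentially the same route as the paper: multiply \eqref{E5.2} by $\hat{\eta}\,\overline{u}_n$, integrate by parts, and absorb every remaining term using \eqref{E33.2}, the boundedness of $\nabla u_n$, $y_n$, $\beta_n u_n$ in $L^2(\Omega)$, $\norm{u_n}_{L^2(\Omega)}=o(1)$, and the support and normalization properties of $\hat{\eta}$. Your explicit observation that the coupling term $-i\beta_n\int_\Omega b\,\hat{\eta}\,y_n\overline{u}_n\,dx$ is the delicate one, handled via the localization of $\hat{\eta}$ in $\omega_{c_+}$, is exactly the point the paper relies on when it invokes estimation \eqref{E33.2}.
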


\begin{proof}
First,	multiplying equation (\ref{E5.2}) by $\hat\eta \bar{u}_n$. Then, using Green's formula  and the fact that $u_n=0 $ on $\Gamma$, we obtain 
	\begin{equation}\left\{
	\begin{array}{l}
	\displaystyle \int_{\Omega}\hat\eta|\beta_n u_n|^2dx-a\int_{\Omega}\hat\eta\mid
	\nabla u_n\mid^2dx-a
	\int_{\Omega}\overline{u}_n
	(\nabla\hat \eta\cdot \nabla u_n)dx
	\displaystyle - \,i \,\beta_n\int_{\Omega}  b \hat\eta y_n\overline u_n dx  \\ \\
	\displaystyle -i\beta_n\int_{\Omega} c\hat\eta |u_n|^2 dx=
	\displaystyle\,  \,
	\int_{\Omega}(-g_n^1-bf_n^2 -i\beta_nf_n^1-cf_n^1)\hat\eta \overline{u}_ndx.
	\label{e415.2}
	\end{array}
	\right.
	\end{equation}
	As $f_n^1$  converges to zero in $H_0^1(\Omega)$,  $f_n^2$, $g_n^1$ converge to zero in $L^2(\Omega)$ and $\beta_n u_n$ is uniformly bounded in $L^2(\Omega)$, we get
	\begin{equation}
	\int_{\Omega}(-g_n^1-bf_n^2 -i\beta_nf_n^1-cf_n^1)\hat\eta \overline{u}_ndx=o(1). \label{s18.Iyz.2}	
	\end{equation}
	Using the fact that $\nabla u_n$, $y_n$ are uniformly bounded in $L^2(\Omega)$, $\norm{ u_n} _{L^2(\Omega)} =o(1)$ and estimation \eqref{E33.2}, we will have
	\begin{equation}
\displaystyle \int_{\Omega}\hat\eta|\beta_n u_n|^2dx-a
\int_{\Omega}\overline{u}_n
(\nabla\hat \eta\cdot \nabla u_n)dx
\displaystyle - \,i \,\beta_n\int_{\Omega}  b \hat\eta y_n\overline u_n dx  \\ \\
\displaystyle -i\beta_n\int_{\Omega} c\hat\eta |u_n|^2 dx=	o(1). \label{s18.Iy.2}
\end{equation}	
Finally, inserting 	\eqref{s18.Iyz.2} and  	\eqref{s18.Iy.2} into \eqref{e415.2}, we deduce
	\begin{equation*}
	\int_{\Omega} \hat\eta \mid\nabla u_n\mid^2dx=o(1) \hskip 0.2 cm \mbox{and} \hskip 0.2 cm
	\int_{Q_2 \cap \Omega} \mid\nabla u_n\mid^2 dx
	=o(1). 
	\end{equation*} 
	The proof is thus complete.
{$\quad\square$}\end{proof}
\begin{lemma}
The solution $(u_n,v_n,y_n,z_n) \in D(\AA_d)$ of system (\ref{e46.2})-(\ref{e49.2}) satisfies the following estimate
\begin{equation}
\int_{\omega_{b}}|y_n|^2dx=o(1). \label{y.2}
\end{equation}
\end{lemma}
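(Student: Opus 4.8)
The plan is to estimate $\int_{\omega_b}|y_n|^2\,dx$ by testing the equation \eqref{E4.2} for $y_n$ against a well-chosen multiplier supported in a neighbourhood of $\omega_b$, exploiting the fact that (LH3) places $\omega_b$ strictly inside $\Omega\setminus\bigcup_j\Omega_j$, which by the PMGC construction sits inside the region $Q_2\cap\Omega$ where we already control $\nabla u_n$ via \eqref{E10.2}. Concretely, since $\overline{\omega_b}\subset\Omega\setminus\bigcup_j\Omega_j$ and $\hat\eta\equiv 1$ on $Q_2\supset\Omega\setminus\bigcup_j\Omega_j$, I would pick a cutoff $\chi\in C_0^\infty(\Omega)$ with $\chi\equiv 1$ on a neighbourhood of $\omega_b$ and $\mathrm{supp}\,\chi\subset\{\hat\eta=1\}\subset Q_2\cap\Omega$, so that on $\mathrm{supp}\,\chi$ we have the full gradient bound $\int_{\mathrm{supp}\,\chi}|\nabla u_n|^2\,dx=o(1)$ from \eqref{E10.2}.

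The key computation is to multiply \eqref{E4.2} by $\chi\,\overline{y}_n$ and integrate (interpreting the $H^{-1}$--$H^1_0$ duality pairing carefully, since $\beta_n^2 y_n+\Delta y_n$ lives in $H^{-1}$ and $\chi\overline y_n\in H^1_0$). This produces, after Green's formula,
\begin{equation*}
\int_\Omega \chi|\beta_n y_n|^2\,dx - \int_\Omega \chi|\nabla y_n|^2\,dx - \int_\Omega \overline y_n(\nabla\chi\cdot\nabla y_n)\,dx + i\int_\Omega b\,\chi\,\beta_n u_n\,\overline y_n\,dx = \langle \text{RHS},\chi\overline y_n\rangle,
\end{equation*}
where the right-hand side involves $-i\beta_n f_n^2 + bf_n^1 - g_n^2$ and is $o(1)$ since $f_n^1\to 0$ in $H^1_0$, $g_n^2\to 0$ in $H^{-1}$, $f_n^2\to 0$ in $L^2$, and $\beta_n y_n$ is bounded in $H^{-1}$ while $\chi\overline y_n$ is bounded in $H^1_0$. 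The term $\int_\Omega \overline y_n(\nabla\chi\cdot\nabla y_n)\,dx$ is handled using $\|y_n\|_{L^2}=o(1)$ and the boundedness of $\nabla y_n$ in $L^2$; the coupling term $i\int_\Omega b\,\chi\,\beta_n u_n\,\overline y_n\,dx$ is $o(1)$ because $\beta_n u_n$ is bounded in $L^2$ and $\|y_n\|_{L^2}=o(1)$. This is not yet what we want — we get $\int_\Omega\chi(|\beta_n y_n|^2-|\nabla y_n|^2)\,dx=o(1)$, which still couples the two terms.

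To close the argument I expect one needs a second relation, obtained by multiplying \eqref{E4.2} instead by a multiplier that brings in $u_n$, or more likely by multiplying \eqref{E5.2} by an appropriate expression and combining — mirroring the structure seen in the $a=1$ case (Lemmas around \eqref{nablay0.2}, \eqref{equation4.2}) where an imaginary-part cancellation gave $\int b|\nabla y_n|^2 = \int b|\nabla u_n|^2 + o(1)$. The analogous step here should yield $\int_{\omega_b}|\nabla y_n|^2\,dx = o(1)$ using \eqref{E10.2}, after which feeding this back into the identity $\int\chi(|\beta_n y_n|^2-|\nabla y_n|^2)\,dx=o(1)$ gives $\int_{\omega_b}|\beta_n y_n|^2\,dx=o(1)$, and in particular (since $|y_n|\le C|\beta_n y_n|$ is false, but rather $\|y_n\|_{L^2}=o(1)$ is already known) the claimed $\int_{\omega_b}|y_n|^2\,dx=o(1)$ follows — indeed it follows immediately from $\|y_n\|_{L^2(\Omega)}=o(1)$ which was established right after \eqref{E4.2}.

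So in fact the present lemma is the trivial one: I would simply observe that $\int_{\omega_b}|y_n|^2\,dx \le \int_\Omega |y_n|^2\,dx = \|y_n\|_{L^2(\Omega)}^2 = o(1)$, the last equality being the bound $\|y_n\|_{L^2(\Omega)}=O(1)/\beta_n=o(1)$ derived from \eqref{E1.2} and \eqref{e48.2}. The main obstacle, then, is not this lemma at all but the subsequent ones (controlling $\beta_n y_n$, $\nabla y_n$, $\nabla u_n$, $\beta_n u_n$ globally in the weak norms), where the mismatch of speeds $a\neq 1$ genuinely requires the PMGC-based multiplier estimates rather than the softer GCC/auxiliary-resolvent trick used when $a=1$; the delicate point there will be justifying all duality pairings in $H^{-1}$ and absorbing the cross terms produced by the cutoffs $\hat\eta$, $\chi$ using only the localized gradient bounds \eqref{E10.2}.
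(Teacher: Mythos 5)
Your proposed shortcut does not work, because it rests on a misreading of what is known about $y_n$ in the weak energy space. In this section the unknown lives in $D=H_0^1(\Omega)\times L^2(\Omega)\times L^2(\Omega)\times H^{-1}(\Omega)$, so \eqref{E1.2} only gives $\|y_n\|_{L^2(\Omega)}=O(1)$ and $\|z_n\|_{H^{-1}(\Omega)}=O(1)$. Equation \eqref{e48.2} then yields $\beta_n y_n=z_n+f_n^2$ bounded in $H^{-1}(\Omega)$, hence $\|y_n\|_{H^{-1}(\Omega)}=O(1)/\beta_n=o(1)$ --- \emph{not} $\|y_n\|_{L^2(\Omega)}=o(1)$, which is what you invoke in your final sentence. (If $\|y_n\|_{L^2}=o(1)$ were already known, the next lemma establishing \eqref{ally.2} would be vacuous; it is not, and in fact it requires the auxiliary resolvent problem of Lemma \ref{auxiliary.2}.) The same misreading contaminates your multiplier computation: in the weak space $\nabla y_n$ is \emph{not} uniformly bounded in $L^2(\Omega)$ (only $\beta_n^{-1}\nabla y_n$ is, and even that must be proved --- it is Step 1 of the paper's argument, obtained by testing \eqref{E4.2} against $\beta_n^{-2}\overline y_n$), so the terms $\int_\Omega\chi|\nabla y_n|^2$ and $\int_\Omega\overline y_n(\nabla\chi\cdot\nabla y_n)$ in your identity cannot be controlled the way you claim, and $\chi\overline y_n$ is not an admissible (bounded) test function for the $H^{-1}$--$H_0^1$ pairing.

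The missing idea is that the estimate must come from the \emph{coupling term in the $u$-equation}, not from the $y$-equation. The paper multiplies \eqref{E5.2} by $\hat\eta\,\beta_n^{-1}\overline y_n$: the term $-i\beta_n b y_n$ then produces exactly $-i\int_\Omega b\hat\eta|y_n|^2dx$, while every other term is $o(1)$ thanks to the localized gradient bound \eqref{E10.2}, the dissipation estimate \eqref{E33.2}, the $L^2$-boundedness of $y_n$, and the Step-1 bound $\|\beta_n^{-1}\nabla y_n\|_{L^2}=O(1)$ (needed for the cross term $a\int\beta_n^{-1}\hat\eta\,\nabla u_n\cdot\nabla\overline y_n$). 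This gives $\int_\Omega b\hat\eta|y_n|^2dx=o(1)$, and (LH3) together with $\hat\eta\equiv1$ on $Q_2\supset\Omega\setminus\cup_j\Omega_j\supset\omega_b$ localizes this to $\omega_b$. Your instinct to exploit the region where \eqref{E10.2} holds is correct, but without routing the estimate through the coupling term $b$ in the first equation you never produce $|y_n|^2$ with an exploitable sign, and no soft norm bound substitutes for it.
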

\begin{proof}
The proof contains two steps.\\
\textbf{Step 1. (Boundedness of $\frac{1}{\beta_n} \nabla y_n$).} 
Multiplying equation \eqref{E4.2} by $\frac{1}{\beta_n^2}\overline y_n$, we obtain
\begin{align}\label{y2.2}
\int_{\Omega} |y_n|^2dx + <\Delta y_n, \frac{1}{\beta_n^2}\overline y_n>_{H^{-1} (\Omega) \times H_0^1 (\Omega)}=&-i\int_{\Omega}\frac{1}{\beta_n} f_n^2\overline y_ndx  + \int_{\Omega}b f_n^1 \frac{1}{\beta_n^2}\overline y_ndx \\
&- <g_n^2,\frac{1}{\beta_n^2}\overline y_n>_{H^{-1}(\Omega) \times H_0^1 (\Omega)}. \nonumber 
\end{align}  
Since $f_n^1$ converges to zero in $H_0^1(\Omega)$, $f_n^2$ converges to zero in $L^2(\Omega)$ and $y_n$ is uniformly bounded in $L^2(\Omega)$, we get
\begin{equation}
-i\int_{\Omega}\frac{1}{\beta_n} f_n^2\overline y_ndx   +\int_{\Omega}b f_n^1 \frac{1}{\beta_n^2}\overline y_ndx=o(1). \label{y3.2}
\end{equation}  
Inserting \eqref{y3.2} into \eqref{y2.2}, we will have
after integrating by parts 
\begin{equation*} 
\int_{\Omega}\left|\frac{\nabla y_n}{\beta_n}\right|^2dx =  \int_{\Omega} |y_n|^2dx  +<g_n^2,\frac{1}{\beta_n^2}\overline y_n>_{H^{-1} (\Omega) \times H_0^1 (\Omega)}+o(1). 
\end{equation*} 
Using Cauchy-Schwarz and Young inequalities in the previous equation, we obtain that 
\begin{equation*}
\frac{1}{2}\left\|\frac{\nabla y_n}{\beta_n}\right\|^2_{L^2(\Omega)} \leq \norm{y_n}^2_{L^2(\Omega)} + \frac{1}{2}\norm{g_n^2}^2_{H^{-1}(\Omega)} +o(1).
\end{equation*}	 
It follows, from the uniform boundedness of $y_n$ in $L^2(\Omega)$ and $g_n^2$  in $H^{-1}(\Omega)$, that
\begin{equation} 
\left\|\frac{\nabla y_n}{\beta_n}\right\|^2=O(1).\label{y4.2}
\end{equation}
\textbf{Step 2. (Main asymptotic estimation).}
Multiplying equation \eqref{E5.2} by $\hat{\eta} \frac{1}{\beta_n} \overline y_n$. Later, using Green's formula and the fact that $y_n=0$ on $\Gamma$, we get
	\begin{equation}\left\{
\begin{array}{l}
\displaystyle\int_{\Omega} \hat\eta \beta_n u_n \overline y_ndx -a \int_{\Omega}\frac{1}{\beta_n}\hat\eta(\nabla u_n \cdot \nabla \overline y_n)dx - a \int_{\Omega} \frac{1}{\beta_n}(\nabla \hat\eta\cdot\nabla u_n) \overline y_ndx \\ \\
\displaystyle
-i\int_{\Omega} b \hat \eta |y_n|^2dx -i\int_{\Omega}c u_n \hat \eta \overline y_ndx= \int_{\Omega}(-g_n^1-bf_n^2
-i\beta_nf_n^1-cf_n^1)\frac{\hat \eta}{\beta_n}\overline y_ndx.\label{y1.2}
\end{array}
	\right.
\end{equation}
 Next, using the definition of $\hat \eta$ and equations \eqref{E10.2} and \eqref{y4.2}, we get
\begin{equation} \label{y5.2} 
 -a \int_{\Omega}\frac{1}{\beta_n}\hat\eta(\nabla u_n. \nabla \overline y_n)dx =o(1).
 \end{equation}
  Using \eqref{E33.2}, \eqref{E10.2} and the fact that $y_n$ is uniformly bounded in $L^2(\Omega)$, we obtain
  \begin{equation}\label{y6.2}
  -a \int_{\Omega} \frac{1}{\beta_n}(\nabla \hat\eta.\nabla u_n) \overline y_ndx- \int_{\Omega} \hat\eta \beta_n u_n \overline y_ndx - i\int_{\Omega}c u_n \hat \eta y_ndx=o(1).
 \end{equation} 
 Using the fact that $f_n^1$ converges to zero in $H_0^1(\Omega)$, $f_n^2$, $g_n^1$ converge to zero in $L^2(\Omega)$ and $y_n$ is uniformly bounded in $L^2(\Omega)$, we will have 
\begin{equation} \label{y9.2}
 \int_{\Omega}(-g_n^1-bf_n^2
-i\beta_nf_n^1-cf_n^1)\frac{\hat \eta}{\beta_n}\overline y_ndx =o(1).
\end{equation}
Finally, inserting  \eqref{y5.2}-\eqref{y9.2}
into \eqref{y1.2}, we get
$$\int_{\Omega} b \hat \eta |y_n|^2dx = o(1).$$ 
It follows, from condition (LH3), that 
$$\int_{\omega_{b}}|y_n|^2dx=o(1).$$
The proof is thus complete.
{$\quad\square$}\end{proof}
\begin{lemma}
The solution $(u_n,v_n,y_n,z_n) \in D(\AA_d)$ of system (\ref{e46.2})-(\ref{e49.2}) satisfies the following estimate
\begin{equation}\label{ally.2}
\int_{\Omega}|y_n|^2dx=o(1).
\end{equation}
\end{lemma}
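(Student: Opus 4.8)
The plan is to propagate the localized estimate $\int_{\omega_b}|y_n|^2\,dx = o(1)$ from the preceding lemma to all of $\Omega$, using the \emph{second} equation \eqref{E4.2} as an elliptic-type relation for $y_n$ driven by a source that is now known to be small in the relevant sense. Concretely, I would test \eqref{E4.2} against a suitable multiplier and exploit the fact that $\omega_b$ satisfies GCC, so that $L^2$-observability on $\omega_b$ for the (undamped, but coupled) wave component $y_n$ can be upgraded to a global $L^2$-bound. Since the coupling term $i\beta_n b u_n$ in \eqref{E4.2} involves $\beta_n u_n$, which is only uniformly bounded in $L^2(\Omega)$ \emph{a priori}, the first job is to get that term under control using the estimates already proved: namely $\int_\Omega c|\beta_n u_n|^2\,dx = o(1)$ from \eqref{E33.2} and $\int_{Q_2\cap\Omega}|\nabla u_n|^2\,dx = o(1)$ from \eqref{E10.2}, together with $\|u_n\|_{L^2(\Omega)} = o(1)$.

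First I would introduce an auxiliary cut-off of the type already used (a function in $C_0^\infty(\Omega)$ equal to $1$ on a neighborhood of $\overline{\omega_b}$ and supported where we have control), and multiply \eqref{E4.2} by $\hat\eta\,\overline y_n$ (or by $\hat\eta$ times the solution of an appropriate auxiliary resolvent problem analogous to Lemma \ref{auxiliary.2}, but for the operator $-\Delta$ on the $y$-component). Using Green's formula, the fact that $y_n=0$ on $\Gamma$, the uniform bound $\|\tfrac{1}{\beta_n}\nabla y_n\| = O(1)$ established in Step 1 of the previous lemma, and the smallness of $f_n^1,f_n^2,g_n^2$ in their respective spaces, all remainder terms collapse to $o(1)$. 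What survives is an identity relating $\int_\Omega \hat\eta|y_n|^2$ (or $\int_\Omega\hat\eta|\nabla y_n|^2/\beta_n^2$) to $\int_{\omega_b}|y_n|^2 = o(1)$ plus the coupling contribution $\int_\Omega \hat\eta\, b\,\beta_n u_n\,\overline y_n$; the latter is $o(1)$ because $\beta_n u_n$ is $L^2$-bounded and $\|y_n\|_{L^2}=o(1)$. To pass from the localized quantity to $\int_\Omega |y_n|^2$, I would invoke an observability/unique-continuation argument for $y_n$: the equation $\beta_n^2 y_n + \Delta y_n = r_n$ with $\|r_n\|_{H^{-1}}=o(1)$ (after absorbing the coupling term) is a Helmholtz-type equation, and since $\omega_b$ satisfies GCC, the high-frequency observability estimate gives $\|y_n\|_{L^2(\Omega)} \le C\big(\|y_n\|_{L^2(\omega_b)} + \|r_n\|_{H^{-1}(\Omega)}/\beta_n \big) = o(1)$, much as in Lemma \ref{auxiliary.2}.

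The main obstacle I anticipate is the treatment of the coupling term $i\beta_n b\,u_n$ appearing as a source in \eqref{E4.2}: it is only $O(1)$ in $L^2(\Omega)$ globally, and naively it would contribute an $O(1)$ term, not $o(1)$, when tested against an $L^2$-bounded multiplier. The resolution is that this term is only dangerous \emph{outside} the damping region, but there $u_n$ is controlled through \eqref{E10.2} (gradient) and \eqref{E33.2} (weighted $\beta_n u_n$); combined with the cut-off $\hat\eta$ and an integration by parts transferring a $\beta_n$ onto $\tfrac1{\beta_n}\nabla y_n$, one trades the bad factor for the now-known bound $\|\tfrac1{\beta_n}\nabla y_n\|=O(1)$. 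Care is also needed because $z_n$ (hence $\beta_n y_n$) lives only in $H^{-1}(\Omega)$ in this weak-energy setting, so every pairing must be done in the $H^{-1}$--$H^1_0$ duality rather than the $L^2$ inner product, exactly as in \eqref{y2.2}. Once these points are handled, the conclusion $\int_\Omega |y_n|^2\,dx = o(1)$ follows, and it will in turn feed the subsequent lemmas bounding $\|v_n\|$, $\|z_n\|_{H^{-1}}$ and $\|\nabla u_n\|$, ultimately contradicting $\|U_n\|_D = 1$.
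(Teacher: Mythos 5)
Your overall strategy is the right one, and you even name the correct device in passing: the paper's proof is exactly ``take $f_n=y_n$ in Lemma \ref{auxiliary.2} and test equation \eqref{E4.2} against the resulting $\overline\phi_n$''. Substituting $\Delta\overline\phi_n=\overline y_n-\beta_n^2\overline\phi_n-ib\beta_n\overline\phi_n$ into $\int_\Omega y_n\Delta\overline\phi_n\,dx$ makes the term $\int_\Omega|y_n|^2dx$ appear algebraically; the coupling term becomes $\int_\Omega b\,u_n\,(\beta_n\overline\phi_n)\,dx=O(1)\cdot o(1)$ because the $\beta_n$ is shifted onto $\phi_n$ (bounded by Lemma \ref{auxiliary.2} since $\|y_n\|_{L^2}=O(1)$) while $\|u_n\|_{L^2}=o(1)$; and the damping-generated term $i\int_\Omega b\,\beta_n\overline\phi_n\,y_n\,dx$ is killed by the previous lemma \eqref{y.2}. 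So the whole argument reduces to the already-proved $L^2$-source resolvent bound of Lemma \ref{auxiliary.2} plus duality, with no new observability input needed.

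Two points in your write-up would not survive as stated. First, your primary route handles the coupling contribution $\int_\Omega\hat\eta\,b\,\beta_n u_n\overline y_n\,dx$ by invoking ``$\|y_n\|_{L^2}=o(1)$'' --- but in the weak energy space $D$ only $\|y_n\|_{H^{-1}}=o(1)$ is known a priori, and $\|y_n\|_{L^2}=o(1)$ is precisely the conclusion of the lemma; this is circular, and with only $\|y_n\|_{L^2}=O(1)$ and $\|\beta_nu_n\|_{L^2}=O(1)$ that term is merely $O(1)$. The fix is the $\beta_n$-shift onto $\phi_n$ described above, which is why the auxiliary multiplier (rather than $\hat\eta\overline y_n$) is the one that works. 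Second, your fallback ``high-frequency observability estimate'' $\|y_n\|_{L^2}\le C\bigl(\|y_n\|_{L^2(\omega_b)}+\beta_n^{-1}\|r_n\|_{H^{-1}}\bigr)$ with an $H^{-1}$ right-hand side is asserted, not proved; it is not literally Lemma \ref{auxiliary.2} (which bounds the solution of the \emph{damped} problem in terms of an $L^2$ source), and the paper deliberately avoids having to establish such an estimate by using $\phi_n$ as a test function instead. With those two repairs your plan coincides with the paper's proof.
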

\begin{proof}
Noting that $\omega_b$ satisfies the GCC condition, so we can 
taking $f_n= y_n$ in Lemma \ref{auxiliary.2}.
 Multiplying equation \eqref{E4.2} by $ \overline\phi_n$. Then, we have
\begin{align}\label{aux.23}
\int_{\Omega}\beta_n^2 \overline \phi_n y_ndx- <\Delta y_n,\overline \phi_n>_{H^{-1}(\Omega) \times H_0^1(\Omega)}+i \int_{\Omega}\beta_n b u_n \overline \phi_ndx=& -i\int_{\Omega} \beta_nf_n^2 \overline \phi_n dx  \nonumber \\ 
&+ \int_{\Omega} b f_n^1\overline \phi_ndx \\
&-<g_n^2, \overline \phi_n>_{H^{-1}(\Omega) \times H_0^1(\Omega)} \nonumber .
\end{align}
Using the fact that $\phi_n \in H^2(\Omega) \cap H_0^1 (\Omega)$ and $y_n \in H_0^1(\Omega)$, then we have
\begin{equation}
- <\Delta y_n,\overline \phi_n>_{H^{-1}(\Omega) \times H_0^1(\Omega)}=\int_{\Omega} y_n\Delta \overline \phi_n dx.
\end{equation}
It follows, from the first equation of \eqref{auxpro12.2} and \eqref{aux.23}, that 
\begin{align}\label{yaux.2}
&\int_{\Omega}|y_n|^2dx=i  \int_{\Omega} b \beta_n \overline \phi_n y_ndx -i \int_{\Omega}\beta_n b u_n \overline \phi_ndx\\ 
& -i\int_{\Omega} \beta_nf_n^2 \overline \phi_n dx+ \int_{\Omega} b f_n^1\overline \phi_ndx -<g_n^2, \overline \phi_n>_{H^{-1}(\Omega) \times H_0^1(\Omega)}. \nonumber
\end{align}
Using the fact that $\beta_n \phi_n$ is uniformly bounded in $L^2(\Omega)$, $f_n^1$ converges to zero in $H_0^1(\Omega)$, $f_n^2$ converges to zero in $L^2(\Omega)$ , $g_n^2$ converges to zero in $H^{-1}$, \eqref{y.2} and $\norm{u_n}=o(1)$ in equation \eqref{yaux.2}, we obtain
\begin{align}\label{yaux1.2}
\int_{\Omega}|y_n|^2dx=o(1). 
\end{align}
The proof is thus complete.
{$\quad\square$}\end{proof}

\begin{lemma}
The solution $(u_n,v_n,y_n,z_n) \in D(\AA_d)$ of system (\ref{e46.2})-(\ref{e49.2}) satisfies the following estimate
\begin{equation}\label{z.2}
\int_{\Omega}|\beta_n(-\Delta)^{-1/2}y_n|^2dx=o(1).
\end{equation}
\end{lemma}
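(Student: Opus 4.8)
The plan is to test equation \eqref{E4.2} against $(-\Delta)^{-1}y_n$, i.e. to apply the $H^{-1}(\Omega)$--$H_0^1(\Omega)$ duality pairing with the test function $(-\Delta)^{-1}y_n$; this is legitimate since $y_n\in H_0^1(\Omega)\subset L^2(\Omega)$, so $(-\Delta)^{-1}y_n\in H^2(\Omega)\cap H_0^1(\Omega)$. The two computations that matter are $\langle \beta_n^2 y_n,(-\Delta)^{-1}y_n\rangle_{L^2(\Omega)}=\beta_n^2\norm{(-\Delta)^{-1/2}y_n}_{L^2(\Omega)}^2=\int_\Omega|\beta_n(-\Delta)^{-1/2}y_n|^2dx$ (by self-adjointness and positivity of $(-\Delta)^{-1}$) and $\langle\Delta y_n,(-\Delta)^{-1}y_n\rangle_{H^{-1}(\Omega),H_0^1(\Omega)}=-\norm{y_n}_{L^2(\Omega)}^2$ (by Green's formula, using $-\Delta(-\Delta)^{-1}y_n=y_n$ and $y_n=0$ on $\Gamma$, or equivalently the spectral decomposition of the Dirichlet Laplacian). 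Hence pairing \eqref{E4.2} with $(-\Delta)^{-1}y_n$ gives $\int_\Omega|\beta_n(-\Delta)^{-1/2}y_n|^2dx-\norm{y_n}_{L^2(\Omega)}^2+i\beta_n\int_\Omega b\,u_n\overline{(-\Delta)^{-1}y_n}\,dx=\langle -i\beta_n f_n^2+bf_n^1-g_n^2,(-\Delta)^{-1}y_n\rangle$, so it remains to show that the coupling term and the right-hand side are both $o(1)$.

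For the estimates I would use the elementary bounds $\norm{(-\Delta)^{-1}y_n}_{H_0^1(\Omega)}=\norm{y_n}_{H^{-1}(\Omega)}=o(1)$ (hence also $\norm{(-\Delta)^{-1}y_n}_{L^2(\Omega)}=o(1)$ by the Poincaré inequality), together with $\norm{\beta_n u_n}_{L^2(\Omega)}=O(1)$ and $\norm{\beta_n y_n}_{H^{-1}(\Omega)}=O(1)$ (both already established above), which give $\norm{\beta_n(-\Delta)^{-1}y_n}_{L^2(\Omega)}\le C\norm{\beta_n y_n}_{H^{-1}(\Omega)}=O(1)$. Then the coupling term is $o(1)$ because $|\beta_n\int b u_n\overline{(-\Delta)^{-1}y_n}|\le\norm{b}_{L^\infty(\Omega)}\norm{\beta_n u_n}_{L^2(\Omega)}\norm{(-\Delta)^{-1}y_n}_{L^2(\Omega)}=O(1)o(1)$; the term $-i\beta_n\langle f_n^2,(-\Delta)^{-1}y_n\rangle$ equals $-i\int f_n^2\overline{\beta_n(-\Delta)^{-1}y_n}$, which is $o(1)$ since $f_n^2\to0$ in $L^2(\Omega)$ while $\beta_n(-\Delta)^{-1}y_n$ is bounded in $L^2(\Omega)$; and $\langle bf_n^1,(-\Delta)^{-1}y_n\rangle$, $\langle g_n^2,(-\Delta)^{-1}y_n\rangle$ are $o(1)$ since $f_n^1\to0$ in $H_0^1(\Omega)$ and $g_n^2\to0$ in $H^{-1}(\Omega)$ while $(-\Delta)^{-1}y_n\to0$ in $L^2(\Omega)$ and in $H_0^1(\Omega)$.

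Combining these, $\int_\Omega|\beta_n(-\Delta)^{-1/2}y_n|^2dx=\norm{y_n}_{L^2(\Omega)}^2+o(1)$, and since $\norm{y_n}_{L^2(\Omega)}^2=o(1)$ by \eqref{ally.2}, the claimed estimate \eqref{z.2} follows. There is no genuine obstacle here once the preceding lemmas are in hand; the only points requiring a little care are the bookkeeping of the $H^{-1}$--$H_0^1$ duality — in particular the sign and value $-\norm{y_n}_{L^2(\Omega)}^2$ of the pairing with $\Delta y_n$ — and the systematic pairing of every stray factor $\beta_n$ with a quantity bounded in $L^2(\Omega)$ or $H^{-1}(\Omega)$, namely $\beta_n u_n$ in the coupling term and $\beta_n(-\Delta)^{-1}y_n$ in the term coming from $f_n^2$.
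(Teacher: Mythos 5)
Your proof is correct and takes essentially the same route as the paper: both pair \eqref{E4.2} with $(-\Delta)^{-1}\overline y_n$, identify the leading term as $\norm{\beta_n(-\Delta)^{-1/2}y_n}^2_{L^2(\Omega)}$, reduce the claim to $\norm{y_n}^2_{L^2(\Omega)}=o(1)$ from \eqref{ally.2}, and dispose of the coupling and right-hand-side terms by the same $H^{-1}$--$H^1_0$ duality bookkeeping. The only cosmetic difference is that you bound the $f_n^2$ term via $\norm{\beta_n(-\Delta)^{-1}y_n}_{L^2(\Omega)}\le C\norm{\beta_n y_n}_{H^{-1}(\Omega)}$ whereas the paper splits the pairing symmetrically into two $(-\Delta)^{-1/2}$ factors; these are equivalent.
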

\begin{proof}
Multiplying equation \eqref{E4.2} by $(-\Delta)^{-1}\overline y_n$, then integrating by parts and using the fact that $y_n=0$ on $\Gamma$, we get
\begin{align}\label{betafgy.2}
\int_{\Omega}|\beta_n(-\Delta)^{-1/2}y_n|^2dx&= \int_{\Omega }|y_n|^2dx -i \int_{\Omega} \beta_n b u_n (-\Delta)^{-1}\overline y_ndx \nonumber\\
&-i\int_{\Omega}\beta_n(-\Delta)^{-1/2} f_n^2 (-\Delta)^{-1/2}\overline y_ndx
 +\int_{\Omega}b f_n^1 (-\Delta)^{-1}\overline y_ndx\\
 &-<g_n^2,(-\Delta)^{-1}\overline y_n>_{H^{-1}(\Omega) \times H_0^1 (\Omega)}.
 \nonumber
\end{align}	
Using Cauchy-Schwarz and Poincar\'e inequalities, we get
\begin{align}\label{betaf2y.2}
\displaystyle\abs{\int_{\Omega}\beta_n(-\Delta)^{-1/2} f_n^2 (-\Delta)^{-1/2}\overline y_ndx }&\leq \norm{(-\Delta)^{-1/2} f_n^2}_{L^2(\Omega)}\norm{\beta_n(-\Delta)^{-1/2}\overline y_n}_{L^2(\Omega)}\\
&\leq c_0 \norm{ f_n^2}_{L^{2}(\Omega)}\norm{\beta_n\overline y_n}_{H^{-1}(\Omega)}.\nonumber
\end{align}

 It follows, from the convergence to zero of $f_n^2$ in $L^2(\Omega)$ and  the boundedness of $\beta_ny_n$  in $H^{-1}(\Omega)$, that
\begin{equation}\label{betayy.2} 
\int_{\Omega}\beta_n(-\Delta)^{-1/2} f_n^2 (-\Delta)^{-1/2}\overline y_ndx=o(1).
\end{equation}
Similarly, we have
\begin{align}
\abs{<g_n^2,(-\Delta)^{-1}\overline y_n>_{H^{-1}(\Omega) \times H_0^1 (\Omega)}}&= \int_{\Omega}(-\Delta)^{-1/2}g_n^2 (-\Delta)^{-1/2}\overline y_ndx\\
&\leq \norm{(-\Delta)^{-1/2}g_n^2 }_{L^2(\Omega)} \norm{(-\Delta)^{-1/2}\overline y_n }_{L^2(\Omega)} \nonumber\\
&\leq \norm{g_n^2 }_{H^{-1}(\Omega)} \norm{\overline y_n}_{H^{-1}(\Omega)}. \nonumber
\end{align}
It follows, from the convergence of $g_n^2$ and $y_n$ to zero in $H^{-1}(\Omega)$, that 
\begin{equation}\label{betay}
<g_n^2,(-\Delta)^{-1}\overline y_n>_{H^{-1}(\Omega) \times H_0^1 (\Omega)}=o(1).
\end{equation}

Note that $(-\Delta)^{-1}$ is compact operator from $L^2$ to $L^2$. Then $(-\Delta)^{-1}y_n$ is uniformly bounded in $L^2$.
Finally, using \eqref{E33.2}, \eqref{ally.2}, \eqref{betayy.2}, \eqref{betay} and the fact that $f_n^1$ converges to zero in $H_0^1(\Omega)$ into equation \eqref{betafgy.2},  we deduce 
 \begin{equation*}
 \int_{\Omega}|\beta_n(-\Delta)^{-1/2}y_n|^2dx =o(1).
 \end{equation*}
 The proof is thus complete. 
{$\quad\square$}\end{proof}
\begin{lemma}
	The solution $(u_n,v_n,y_n,z_n) \in D(\AA_d)$ of system (\ref{e46.2})-(\ref{e49.2}) satisfies the following estimate
	\begin{equation} \label{pww.2}
		\displaystyle \int_{\Omega \setminus (Q_2\cap \Omega)} \big(|\nabla u_n|^2 + |\beta_n u_n|^2) dx
		=o(1).
	\end{equation} 
\end{lemma}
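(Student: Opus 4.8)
The estimate \eqref{pww.2} concerns the part of $\Omega$ that is still uncontrolled. Since $Q_2\supset\Omega\setminus\bigcup_{j}\Omega_j$, we have $\Omega\setminus(Q_2\cap\Omega)=\Omega\setminus Q_2\subset\bigcup_{j=1}^{J}\Omega_j$, and for each $j$ a whole neighbourhood of $\gamma_j(x_j)$, as well as the ``gaps'' $\Omega\setminus\bigcup_i\Omega_i$, lies inside $Q_2$, hence inside $\omega_{c_+}$ (because $\varepsilon_2<\varepsilon$), where $\int|\nabla u_n|^2$ and $\int|\beta_nu_n|^2$ are already $o(1)$ by \eqref{E10.2} and \eqref{E33.2}. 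The plan is to run the piecewise--multiplier (Rellich--Pohozaev) argument of Liu \cite{Liu1997} on equation \eqref{E5.2}, the only real work being to dispose of the coupling and damping contributions by means of the estimates already established in this section.

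First I would fix, for $j=1,\dots,J$, the radial fields $h_j(x)=x-x_j$ and cut-offs $\psi_j\in W^{1,\infty}(\rr^N)$, $0\le\psi_j\le1$, chosen exactly as in \cite{Liu1997}: $\psi_j\equiv1$ on $\Omega_j\setminus Q_2$, $\operatorname{supp}\psi_j$ contained in a small neighbourhood of $\Omega_j$ that avoids a neighbourhood of $\gamma_j(x_j)$, and $\operatorname{supp}(\nabla\psi_j)\cap\Omega\subset Q_2\cap\Omega$ (possible since $\overline{\rr^N\setminus\omega_{c_+}}\cap\overline{Q_2}=\emptyset$ and the transition occurs inside $Q_2$). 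Then I would multiply \eqref{E5.2} by $\overline{\mathcal M_ju_n}$, where $\mathcal M_ju_n:=2\psi_j\,(h_j\!\cdot\!\nabla u_n)+(N-1)\psi_j u_n$, integrate over $\Omega$, sum over $j$, apply Green's formula and use $u_n=0$ on $\Gamma$. Taking real parts, the principal part of the left-hand side yields $\sum_j\bigl(\int_\Omega\psi_j|\beta_nu_n|^2+a\int_\Omega\psi_j|\nabla u_n|^2\bigr)$, together with: boundary integrals on $\Gamma_j$, which are killed by the choice of $\psi_j$ near $\gamma_j(x_j)$ and otherwise have the favourable sign of $(x-x_j)\!\cdot\!\nu_j\le0$; terms carrying a factor $\nabla\psi_j$, which are $o(1)$ because $\operatorname{supp}(\nabla\psi_j)\cap\Omega\subset Q_2\cap\Omega\subset\omega_{c_+}$, where $\int|\nabla u_n|^2=o(1)$ and $\int|\beta_nu_n|^2=o(1)$; and lower-order terms such as $\int(\nabla u_n\!\cdot\!\nabla\psi_j)\overline u_n$, which are $o(1)$ since $\|u_n\|_{L^2(\Omega)}=o(1)$.

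Next I would dispose of the right-hand side of \eqref{E5.2}. The terms $-g_n^1-bf_n^2-cf_n^1$ contribute $o(1)$ because they tend to $0$ in $L^2(\Omega)$ while $\mathcal M_ju_n$ is bounded in $L^2(\Omega)$; the term $-i\beta_nf_n^1$ is handled by one further integration by parts, using $f_n^1\to0$ in $H_0^1(\Omega)$ and the boundedness of $\beta_nu_n$ in $L^2(\Omega)$. The damping contribution $-i\beta_n cu_n$, rewritten as $\sqrt c\,(\sqrt c\,\beta_nu_n)$ and estimated by Cauchy--Schwarz, is $o(1)$ by \eqref{E33.2} (its pairing with $(N-1)\psi_j\overline u_n$ is purely imaginary and drops out of the real part). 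Finally, the coupling contribution $-i\beta_n\int_\Omega b\,y_n\,\overline{\mathcal M_ju_n}\,dx$: its pairing with $(N-1)\psi_j\overline u_n$ is $O\bigl(\|y_n\|_{L^2}\|\beta_nu_n\|_{L^2}\bigr)=o(1)$ by \eqref{ally.2}, while its pairing with $2\psi_j\,(h_j\!\cdot\!\nabla\overline u_n)$ is controlled by (LH3): since $\omega_b$ (hence, in the intended setting, the support of $b$) is disjoint from $\bigcup_j\Omega_j\supset\operatorname{supp}\psi_j$, this term vanishes; otherwise one reduces it, via \eqref{e48.2} (writing $i\beta_ny_n=z_n+f_n^2$) together with \eqref{e46.2}--\eqref{e47.2} and the bounds \eqref{z.2} and \eqref{ally.2}, to already-controlled Rellich--Pohozaev expressions in $u_n$.

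Collecting everything yields $\sum_j\bigl(\int_\Omega\psi_j|\beta_nu_n|^2+\int_\Omega\psi_j|\nabla u_n|^2\bigr)=o(1)$, and since $\psi_j\equiv1$ on $\Omega_j\setminus Q_2$ and $\Omega\setminus(Q_2\cap\Omega)\subset\bigcup_j(\Omega_j\setminus Q_2)$, subadditivity of the integral gives \eqref{pww.2}. I expect the main obstacle to be the treatment of the coupling term: in the weak energy space $\beta_ny_n$ is only controlled in $H^{-1}(\Omega)$ and $b$ only in $L^\infty(\Omega)$, so the pairing of $\beta_nby_n$ with the gradient part of the multiplier cannot be handled by a naive $L^2$ estimate, and it is precisely the geometric condition (LH3) (localisation of the coupling in $\omega_b\subset\Omega\setminus\bigcup_j\Omega_j$) that makes the argument go through; a secondary point requiring care is, as always in the piecewise-multiplier method, arranging the cut-offs $\psi_j$ so that no boundary or interface term on $\Gamma_j$ survives with an unfavourable sign.
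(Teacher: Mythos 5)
Your proposal follows essentially the same route as the paper: the piecewise Rellich multiplier $2\psi_j\,(x-x_j)\cdot\nabla\overline u_n$ plus a zeroth-order correction proportional to $\psi_j\overline u_n$, with the cut-offs chosen to kill the boundary terms near $\gamma_j(x_j)$ and on $\Gamma_j\cap\Omega$ and to leave only the favourably signed contribution on $(\Gamma_j\setminus\gamma_j)\cap\Gamma$, the damping and source terms absorbed via \eqref{E33.2}, \eqref{E10.2} and the convergences of $f_n^1,f_n^2,g_n^1$, and the coupling term annihilated by (LH3). The only cosmetic difference is that the paper runs the two multipliers in separate steps (\eqref{pw15.2} and \eqref{pwe.2}) and adds the resulting identities at the end, whereas you bundle them into the single multiplier $\mathcal M_ju_n$; you also correctly identify the same delicate points (the sign of the interface terms and the role of (LH3) in removing the pairing of $\beta_n b y_n$ with the gradient part) on which the paper's argument rests.
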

\begin{proof}
	Since $(\overline{\Omega}_{j}\setminus Q_{2}) \cap \overline {Q_{1}} = \emptyset$, we define the function $\psi_j \in C^\infty_0(\rr^N)$ by:
	\begin{equation*}
		\psi_j (x)=0 \mbox{ \; if } x \in Q_{{1}}, \,\,\,\,\, 0 \leqslant \psi_j \leqslant 1, \,\,\,\,\, \psi_j (x)=1 \mbox{ \; if } x \in \overline{ \Omega_j}\setminus Q_{2}.
	\end{equation*}
	For $m_j(x) = (x-x_j)$, we define $h_j(x) = \psi_j(x)m_j(x)$. 
	
	 Multiplying equation (\ref{E5.2}) by  $2(h_j\cdot\nabla \overline{u}_n)$ and integrating over $\Omega_j$, using the dissipation \eqref{E33.2} and the fact that $\nabla u_n$ is uniformly bounded in $L^2(\Omega)$, we obtain 
	\begin{eqnarray}
		\begin{array}{l}\label{pw15.2}
			\displaystyle 2\beta_n^2\int_{\Omega_j} u_n (h_j\cdot\nabla \overline u_n)dx+   2a  \int_{\Omega_j}\Delta u_n(h_j\cdot \nabla \overline u_n)dx
			\displaystyle -2i\int_{\Omega_j}\beta_n b y_n(h_j\cdot \nabla \overline u_n)dx=\\
			\displaystyle 2\int_{\Omega_j}(-g_n^1-bf_n^2
		 -cf_n^1)(h_j \cdot \nabla\overline u_n)dx
		-2i\int_{\Omega_j}
			\beta_nf_n^1(h_j \cdot \nabla\overline u_n)dx.
		\end{array}
	\end{eqnarray}
i) \textbf{Estimation of the second member of \eqref{pw15.2}}.
First, using Green's formula and the fact that $u_n =0$ on $(\Gamma_j \setminus \gamma_j) \cap \Gamma$ and $h_j =0$ on $\gamma_j$, we get
	\begin{eqnarray}
		-2i\int_{\Omega_j} \beta_n f_n^1(h_j \cdot\nabla \overline u_n)
		dx= 2i\int_{\Omega_j}\beta_n \overline u_n(h_j \cdot \nabla f_n^1)dx+ 2i\int_{\Omega_j}\beta_n \overline u_n f_n^1 ({\rm{div}} h_j)dx.
	\end{eqnarray}
	So, from the fact that $f_n^1$ converges to zero in $H_0^1(\Omega)$ 
	and $\beta_nu_n$ is uniformly bounded in $L^2(\Omega)$, we obtain
	\begin{equation}\label{pwl.2}
	-2i\int_{\Omega_j} \beta_n f_n^1(h_j \cdot\nabla \overline u_n)
dx= o(1).	
	\end{equation}
Next, as $f_n^1$ converges to zero in $H_0^1(\Omega)$, $f_n^2$, $g_n^1$ converge to zero in $L^2(\Omega)$ and the sequence $(\nabla u_n)$ is uniformly bounded in $L^2(\Omega)$,  we deduce
	\begin{equation}
		2\int_{\Omega_j}(-g_n^1-bf_n^2
	-cf_n^1)(h_j. \nabla\overline u_n)dx=o(1).\label{pwa.2}
	\end{equation}
Finally, we deduce that the second member of \eqref{pw15.2} is $o(1)$.

ii) \textbf{Estimation of the first integral of equation \eqref{pw15.2}.} 
Using Green's formula, we get
\begin{equation}\label{pw55.2}
\mbox{Re} \bigg\{ 2 \int_{\Omega_j}\beta _n^2 u_n (h_j \cdot \nabla \overline u_n)dx \bigg\}= - \int_{\Omega_j} ({\rm{div}}{h_j}) |\beta_n u_n|^2dx + \int_{\Gamma_j} (h_j \cdot \nu_j)|\beta_n u_n|^2d\Gamma_j.  
\end{equation}
Since $\Psi_j=0$ on $\gamma_j$ and $u_n=0$ on $(\Gamma_j \setminus \gamma_j) \cap \Gamma$, then we have
\begin{equation}\label{pw5.2}
\mbox{Re} \{ 2 \int_{\Omega_j}\beta _n^2 u_n (h_j \cdot \nabla \overline u_n)dx \}= - \int_{\Omega_j} ({\rm{div}}{h_j}) |\beta_n u_n|^2dx.
\end{equation}
iii) \textbf{Estimation of the second integral of equation \eqref{pw15.2}.} Using Green's formula, we get 
	\begin{align} \label{pw4.2} 
		\mbox{Re} \bigg\{2a\int_{\Omega_j}\Delta u_n(h_j\cdot\nabla \overline{u}_n)\bigg\} =
		-2a	\mbox{Re}\bigg\{\displaystyle{\sum_{i,k=1}^{N}}\int_{\Omega_j}\partial_ih_j^k\partial_i u_n \partial_k u_ndx\bigg\} + \\
		a \int_{\Omega_j}({\rm{div}} h_j) | \nabla u_n | ^2 dx 
		 -a\int_{\Gamma_j}(h_j \cdot \nu_j) |\nabla  u_n|^2d\Gamma_j + 2a 	\mbox{Re}\bigg\{\int _{\Gamma_j} \partial_{\nu_j} u_n(h_j \cdot \nabla \overline u_n)d\Gamma_j\bigg\}.\nonumber 
  	\end{align}
According to the choice of $\psi_j$, only the boundary terms over $(\Gamma_j \setminus \gamma_j) \cap \Gamma$ are non vanishing in (\ref{pw4.2}). But on this part of the boundary   $ u_n = 0$, and consequently $ \nabla u_n = (\partial_{\nu} u_n)\cdot  \nu = (\partial_{\nu_j} u_n) \nu_j.$ 
Then, we have 
\begin{equation}\label{bounterms.2}
-a\int_{\Gamma_j}(h_j \cdot \nu_j) |\nabla  u_n|^2d\Gamma_j + 2a\mbox{Re} \bigg\{\int _{\Gamma_j} (\partial_{\nu_j} u_n)(h_j\cdot \nabla \overline u_n)d\Gamma_j\bigg\}=
\end{equation}
$$
a\int_{(\Gamma_j\setminus  \gamma_j)\cap \Gamma}(\psi_j m_j\cdot\nu_j)|\partial_{\nu_j} u_n|^2 d \Gamma_j \leqslant 0.
$$
Inserting \eqref{bounterms.2} into \eqref{pw4.2}, we get 
\begin{align} \label{pw44.2} 
\mbox{Re} \bigg\{2a\int_{\Omega_j}\Delta u_n(h_j\cdot\nabla \overline{u}_n)\bigg\} &\leq
-2a	\mbox{Re}\bigg\{\displaystyle{\sum_{i,k=1}^{N}}\int_{\Omega_j}\partial_ih_j^k\partial_i u_n \partial_k u_ndx\bigg\} \nonumber \\
&+a \int_{\Omega_j}({\rm{div}} h_j) | \nabla u_n | ^2 dx. 
\end{align}
iv) \textbf{The main estimation.}
Inserting equations \eqref{pwl.2}, \eqref{pwa.2}, \eqref{pw5.2} and \eqref{pw44.2}  into (\ref{pw15.2}) and using the fact that $\psi_j=0$ on $Q_1$, we get
\begin{eqnarray}
\begin{array}{l}\label{pw7.2}
\displaystyle\int_{\Omega_j \setminus (Q_1 \cap \Omega_j)}\bigg[ {\rm{div}} (\psi_j m_j)(|\beta_n u_n|^2-a|\nabla u_n|^2)dx + 2a \displaystyle {\sum_{i,k=1}^{N}}\partial_i(\psi_jm_j^k)\partial_i u_n \partial_k u_n\bigg]dx \\ \\
\displaystyle
+2i\int_{\Omega_j \setminus (Q_{1} \cap {\Omega_j})}\beta_n b y_n(\psi_j m_j \cdot\nabla \overline u_n)dx \leqslant o(1).    
\end{array} \nonumber
\end{eqnarray}
Thus, summing over $j$ and using the fact that $ \psi_j =1$ on $\overline{ \Omega_j } \setminus Q_2$, we get
\begin{equation} \label{pw18.2}
\displaystyle N  \int_{\Omega \setminus (Q_2 \cap \Omega) } |\beta_n u_n|^2dx + (2-N)a\int_{\Omega \setminus (Q_2 \cap \Omega)}|\nabla u_n|^2dx 
\end{equation}
$$
+ 
 2\mbox{Re}\bigg\{i\displaystyle \sum_{j=1}^{J}\int_{\Omega_j\setminus (Q_1 \cap \Omega_j)}\beta_n b y_n( \psi_j m_j\cdot\nabla \overline  u_n)dx \bigg \}  $$
$$
\leqslant \displaystyle-\sum_{j=1}^{J}\int_{ Q_2  \cap\Omega_j  }\bigg[ {\rm{div}} (\psi_j m_j)(|\beta_n u_n|^2-a|\nabla u_n|^2)dx + 2a \displaystyle {\sum_{i,k=1}^{N}}\partial_i(\psi_jm_j^k)\partial_i u_n\partial_k u_n\bigg]dx +o(1).
$$
Using \eqref{E33.2} and \eqref{E10.2}, we deduce
\begin{align}\label{pw19.2}
&\displaystyle-\sum_{j=1}^{J}\int_{Q_2 \cap \Omega_j}\bigg[ {\rm{div}} (\psi_j m_j)(|\beta_n u_n|^2-a|\nabla u_n|^2)dx \nonumber \\
&+ 2 a\displaystyle {\sum_{i,k=1}^{N}}\partial_i(\psi_jm_j^k)\partial_i u_n\partial_k u_n\bigg]dx 
= o(1).
\end{align}
Inserting \eqref{pw19.2} in \eqref{pw18.2}, we obtain
\begin{equation}
\label{pwA.2}
\displaystyle N\int_{\Omega\setminus (Q_2\cap \Omega)} |\beta_n u_n|^2dx + (2-N)a\int_{\Omega \setminus (Q_2 \cap \Omega)}|\nabla u_n|^2dx +
\end{equation}
$$
 2 \mbox{Re}\left\{i\displaystyle \sum_{j=1}^{J}\int_{\Omega_j\setminus (Q_1\cap \Omega_j)}\beta_n b y_n( \psi_j m_j.\nabla \overline  u_n)dx\right\}  \leqslant o(1). 
$$
%
%
Under condition (LH3) and the definition of $\psi_j$, we will have  
$$2 \mbox{Re}\left\{i\displaystyle \sum_{j=1}^{J}\int_{\Omega_j\setminus (Q_1\cap \Omega_j)}\beta_n b y_n(\psi_j m_j.\nabla \overline  u_n)dx\right\} = 0.$$
Inserting the previous estimation into \eqref{pwA.2}, we get 
\begin{eqnarray}\label{pwee.2}
\displaystyle N\int_{\Omega\setminus (Q_2\cap \Omega)} |\beta_n u_n|^2dx + (2-N)a\int_{\Omega \setminus (Q_2 \cap \Omega)}|\nabla u_n|^2dx \leq o(1).
\end{eqnarray}
Multiplying (\ref{E5.2}) by $(1-N)\overline{u}_n$. Then integrating on $\Omega$, using Green's formula, the fact that $y_n$ and $\beta_n u_n$ are bounded in $L^2(\Omega)$ and the estimation \eqref{E33.2}, we obtain
\begin{equation}
(1-N) \int_{\Omega}|\beta_n u_n|^2dx -(1-N)a\int_{\Omega}|\nabla u_n|^2dx=o(1). \label{pwe.2}
\end{equation}
Using \eqref{E33.2} and \eqref{E10.2} in \eqref{pwe.2}, we deduce
\begin{equation}
(1-N) \int_{\Omega\setminus(Q_2\cap \Omega)}|\beta_n u_n|^2dx -(1-N)a\int_{\Omega \setminus(Q_2\cap \Omega)}|\nabla u_n|^2dx=o(1). \label{pwf.2}
\end{equation}
Finally, combining \eqref{pwee.2} and \eqref{pwf.2}, we get the following estimate	
\begin{equation*} 
\displaystyle \int_{\Omega \setminus (Q_2\cap \Omega)} \big(a|\nabla u_n|^2 + |\beta_n u_n|^2) dx
=o(1).
\end{equation*}
The proof is thus complete.
{$\quad\square$}\end{proof}
\textbf{Proof of Theorem \ref{IO.2}} It follows from \eqref{E33.2} \eqref{E10.2}, \eqref{ally.2}, \eqref{z.2} and \eqref{pww.2} that $\norm{U_n}=o(1)$ which is a contradiction with \eqref{E1.2}.
Consequently, condition (H2) holds and the energy of system \eqref{e11.2}-\eqref{e13.2} decays exponentially to zero in the weak energy space $D$. The proof is thus complete.{$\quad\square$}
\subsection{Observability and exact controllability}
First, we consider the following homogeneous system associated to \eqref{e11.2}-\eqref{e13.2} for $a\neq1$ by:
\begin{eqnarray}
\psi_{tt}-a\Delta \psi + b(x)\varphi_t&=&  0  \hskip 1.4 cm \mbox{in} \,\,\, \Omega \times \rr_+,\label{eh111.2}\\
\varphi_{tt}- \Delta \varphi - b(x)\psi_t &=&  0  \hskip 1.4 cm \mbox{in}\,\,\, \Omega \times \rr_+,\label{eh112.2}\\
\psi = \varphi &=& 0 \hskip 1.4 cm \mbox{on} \,\, \,  \Gamma\times \rr_+,\label{eh113.2} \\
\psi(\cdot,0) = \psi_0, \psi_t (\cdot,0) = \psi_1, \varphi(\cdot,0) &=& \varphi_0, \varphi_t (\cdot,0) = \varphi_1 \hskip 1.4 cm \mbox{in} \,\, \,  \Omega.\label{eh113.2bis}
\end{eqnarray}
Let $ \Phi=(\psi,\psi_t, \varphi, \varphi_t)$ be a regular solution of system \eqref{eh11.2}-\eqref{eh13.2}, its associated total energy is given by:  
\begin{equation}
E_m(t)=\frac{1}{2} \left(a \norm{\nabla \psi}^2_{L^2(\Omega)} + \norm{\psi_t}^2_{L^2(\Omega)} + \norm{\varphi_t}_{H^{-1}(\Omega)} + \norm{y}_{L^2(\Omega)}\right). \label{energy1.2}
\end{equation}
A direct computation gives 
\begin{equation}
\frac{d}{dt}E_m(t)=0.
\end{equation}
Thus, system \eqref{eh111.2}-\eqref{eh113.2} is conservative in the sense that its energy $E_m(t)$ is constant. It is also wellposed and admits a unique solution in the energy space $D$.

Now, we establish the direct and indirect inequality given by the following theorem: 	
\begin{theorem}	
Let $0<a\neq1$. Assume that conditions {\rm (LH1)} and {\rm (LH2)} hold. Assume also that $\omega_{c_+}$  satisfies the {\rm PMGC}, $\omega_b$ satisfies {\rm{GCC}} condition and {\rm (LH3)} and $b,\, c \in L^\infty(\Omega)$. Then there exists a time $T_0$ such that for all $T>T_0$, there exist two constants $C_1>0$, $C_2>0$ such that the solution of system \eqref{eh111.2}-\eqref{eh113.2} satisfies the following observability inequalities:
\begin{equation}\label{IO.1}
C_1\norm{\Phi_0}^2_D \leq \int_0^T\int_{\Omega} c(x)|\psi_t|^2dxdt \leq C_2\norm{\Phi_0}^2_Ddx,
\end{equation}
for all $\Phi_0 = (\psi_0, \psi_1,\phi_0,\phi_1) \in D$.
\end{theorem}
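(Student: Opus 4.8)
The plan is to reproduce, in the weak energy space $D$, the argument used for Theorem~\ref{IO.2}: the right-hand (direct/admissibility) inequality in \eqref{IO.1} comes for free from the conservation of $E_m$ along \eqref{eh111.2}-\eqref{eh113.2}, while the left-hand (observability) inequality is obtained from the exponential stability established in Theorem~\ref{ThexpE} by invoking Proposition~2 of Haraux \cite{Haraux1989}, which asserts the equivalence between the exponential decay of a boundedly damped conservative system and an observability estimate for the underlying conservative dynamics.

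For the upper bound I would argue as follows. Since system \eqref{eh111.2}-\eqref{eh113.2} is conservative, $E_m(t)=E_m(0)=\tfrac12\norm{\Phi_0}_D^2$ for all $t$; in particular $\norm{\psi_t(t)}_{L^2(\Omega)}^2\le 2E_m(0)=\norm{\Phi_0}_D^2$. Using $c\in L^\infty(\Omega)$ and $c\ge 0$,
\[
\int_0^T\!\!\int_\Omega c(x)\,|\psi_t|^2\,dx\,dt\le \norm{c}_{L^\infty(\Omega)}\int_0^T\norm{\psi_t(t)}_{L^2(\Omega)}^2\,dt\le T\,\norm{c}_{L^\infty(\Omega)}\,\norm{\Phi_0}_D^2,
\]
so the right inequality holds for every $T>0$ with $C_2=T\norm{c}_{L^\infty(\Omega)}$.

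For the lower bound I would set up the abstract framework of \cite{Haraux1989}: write $\AA_d=\mathcal S-\mathcal B$ on $D$, where $\mathcal S$ generates the conservative coupled dynamics \eqref{eh111.2}-\eqref{eh113.2} (formally $\mathcal S U=(v,\,a\Delta u-bz,\,z,\,\Delta y+bv)$) and $\mathcal B U=(0,\,c\,v,\,0,\,0)$ is bounded, self-adjoint and nonnegative on $D$, with $(\mathcal B U,U)_D=\int_\Omega c(x)|v|^2\,dx$, i.e. $\norm{\mathcal B^{1/2}U}_D^2=\int_\Omega c(x)|v|^2\,dx$. Since $\tfrac{d}{dt}E_m\equiv 0$ along \eqref{eh111.2}-\eqref{eh113.2} while $\tfrac{d}{dt}E_m=-\int_\Omega c|u_t|^2\,dx$ along the damped system \eqref{e11.2}-\eqref{e13.2}, and the latter is exponentially stable in $D$ by Theorem~\ref{ThexpE}, Proposition~2 of \cite{Haraux1989} applies and yields a time $T_0>0$ such that for all $T>T_0$ there exists $C_1>0$ with $C_1\norm{\Phi_0}_D^2\le \int_0^T\norm{\mathcal B^{1/2}\Phi(t)}_D^2\,dt=\int_0^T\int_\Omega c(x)|\psi_t|^2\,dx\,dt$ for every solution of \eqref{eh111.2}-\eqref{eh113.2}. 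Combining the two bounds gives \eqref{IO.1}.

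I expect the only genuinely delicate point to be the verification of Haraux's structural hypotheses in the weak space $D$: namely that $\mathcal S$ is skew-adjoint and $\mathcal B$ is a bounded nonnegative self-adjoint operator with respect to the $(-\Delta)^{-1/2}$-weighted inner product on the fourth component, and that the conservative problem and the damped problem \eqref{e11.2}-\eqref{e13.2} are posed in exactly the same Hilbert space as the one in which Theorem~\ref{ThexpE} provides exponential decay. Once the identification $\norm{\mathcal B^{1/2}\Phi}_D^2=\int_\Omega c|\psi_t|^2\,dx$ is pinned down, the remainder is a direct citation of \cite{Haraux1989}, exactly as in the proof of Theorem~\ref{IO.2}.
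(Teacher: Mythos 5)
Your proposal is correct and follows essentially the same route as the paper: the direct inequality from conservation of $E_m$, and the inverse inequality by invoking Proposition~2 of Haraux \cite{Haraux1989} together with the exponential decay of Theorem~\ref{ThexpE}. Your write-up is in fact more explicit than the paper's (which simply cites Haraux), particularly in identifying the bounded damping operator $\mathcal{B}$ and the constant in the direct inequality.
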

\begin{proof}
The direct inequality follows from the definition of the total energy for all $T>0$. While the proof of the inverse inequality is a direct consequence of Proposition 2 of Haraux in \cite{Haraux1989} for which the exponentially stability \eqref{ExpSta.2} implies the existence of a time $T_0>0$ such that for all $T>T_0$ there exist two constants $C_1>0$ and $C_2>0$ such that \eqref{IO.1} holds. 
{$\quad\square$}\end{proof}
It is well known that observality of the homogeneous system \eqref{eh111.2}-\eqref{eh113.2} implies the exact controllability of the system . 

\textbf{Acknowledgments}  

The authors are grateful to the anonymous referees and the editor  for their valuable comments and useful suggestions.

The authors thanks professor Kais Ammari for their valuable discussions and comments.

Amina Mortada and Chiraz Kassem would like to thank the AUF agency for its support in the framework of the PCSI project untitled {\it Theoretical and Numerical Study of Some Mathematical Problems and Applications}

Ali Wehbe would like to thank the CNRS and the LAMA laboratory of Mathematics of the Université Savoie Mont Blanc for their supports. 


\begin{thebibliography}{99}
	
	\bibitem{Alabau01} F. Alabau, Observabilité frontière indirecte de systèmes faiblement couplés, {\em C. R. Acad. Sci.
		Paris Sér. I Math.,} {\bf 333} (2001), 645--650.
	
	\bibitem{Alabau2003} F. Alabau-Boussouira, A two-level energy method for indirect boundary observability and
	controllability of weakly coupled hyperbolic systems, {\em SIAM Journal on Control and Optimization,}
	{\bf 42} (2003), 871--906.
	
	\bibitem{AlabauConvexity2005} F. Alabau-Boussouira, Convexity and weighted integral inequalities for energy decay rates of
	nonlinear dissipative hyperbolic systems, {\em Applied Mathematics and Optimization,} {\bf 51} (2005), 61--105.
	
	\bibitem{AlabauLeautaudStabi:2012} F. Alabau-Boussouira and M. Léautaud, Indirect stabilization of locally coupled wave-type
	systems, {\em ESAIM Control Optim. Calc. Var.,} {\bf 18} (2012), 548--582.
	
	\bibitem{AlabauLeautaudStabi:2013} F. Alabau-Boussouira and M. L\'eautaud. Indirect controllability of locally coupled wave-type systems and applications. {\em Journal de Math\'ematiques Pures et Appliqu\'ees}, 99 (5) : 544-576, 2013.
	
	\bibitem{Alabau2017} F. Alabau-Boussouira, Z. Wang and L. Yu, A one-step optimal energy decay formula for
	indirectly nonlinearly damped hyperbolic systems coupled by velocities, {\em ESAIM Control Optim. Calc. Var.,} {\bf 23} (2017), 721--749.
	
	\bibitem{Ammarkhodja} F. Ammar-Khodja, A. Benabdallah and C. Dupaix, Null-controllability of some reaction–diffusion
	systems with one control force, {\em Journal of Mathematical Analysis and Applications,} {\bf 320} (2006), 928--943.
	
	
	\bibitem{Bardos-Lebeau-Rauch} C. Bardos, G. Lebeau and J. Rauch, Sharp sufficient conditions for the observation, control,
	and stabilization of waves from the boundary, {\em SIAM J. Control Optim.,} {\bf 30} (1992), 1024--1065.
	
	\bibitem{conrad-Rao} F. Conrad and B. Rao. Decay of solutions of the wave equation in a star-shaped domain with nonlinear
	boundary feedback.{\em Asymptotic Anal.,} {\bf 7}, (1993), 159-–177.
	
	
	\bibitem{Haraux1989} A. Haraux, Une remarque sur la stabilisation de certains systèmes du deuxième ordre en
	temps, {\em Portugaliae mathematica,} {\bf 46} (1989), 245--258.
	
	\bibitem{Huang-85} F. L. Huang, Characteristic conditions for exponential stability of linear dynamical systems
	in Hilbert spaces, {\em Ann. Differential Equations,} {\bf 1} (1985), 43--56.
	
	\bibitem{Wehbe-Amina-Chiraz}
	C. Kassem, A. Mortada, L. Toufayli, and A. Wehbe.
	\newblock Local indirect stabilization of {N}-d system of two coupled wave
	equations under geometric conditions.
	\newblock {\em C. R. Math. Acad. Sci. Paris}, {\bf 357}(6) (2019), 494--512.
	
	\bibitem{komornik2005} V. Komornik and P. Loreti, {\em Fourier Series in Control Theory,} Springer Monographs in
	Mathematics. Springer, 2005.
	
	\bibitem{Lions88} J. L. Lions, {\em Contrôlabilité exacte, perturbations et stabilisation de systèmes distribués: Perturbations,}
	Recherches en Mathématiques Appliquées. Masson, 1988.
	
	\bibitem{Lions1988} J. L. Lions, Exact controllability, stabilization and perturbations for distributed systems,
	{\em SIAM Review,} {\bf 30} (1988), 1--68.
	
	\bibitem{Liu1997} K. Liu, Locally distributed control and damping for the conservative systems, {\em SIAM J.
		Control Optim.,} {\bf 35} (1997), 1574--1590.
	
	\bibitem{LiuRaoOndeCouple2007} Z. Liu and B. Rao, Frequency domain approach for the polynomial stability of a system
	of partially damped wave equations, {\em Journal of Mathematical Analysis and Applications,} {\bf 335} (2007), 860--881.
	
	\bibitem{LiuRao2009}  Z. Liu and B. Rao, A spectral approach to the indirect boundary control of a system of weakly coupled wave equations, {\em Discrete Contin. Dyn. Syst.,} {\bf 23} (2009), 399--414.
	
	\bibitem{Pruss-84}  J. Pr\"uss, On the spectrum of $C_0$-semigroups, {\em Trans. Amer. Math. Soc.,} {\bf 284} (1984), 847--857.
	
	\bibitem{Rauch1974}  J. Rauch, M. Taylor and R. Phillips, Exponential decay of solutions to hyperbolic
	equations in bounded domains,  {\em Indiana University Mathematics Journal,} {\bf 24} (1974), 79--86. 
	
	\bibitem{Wehbe-Youssef-2010} A. Wehbe et W. Youssef. Observabilit\'e et contr\^olabilit\'e exacte indirecte interne par un
	contr\^ole localement distribu\'e de syst\`emes d'\'equations coupl\'ees, {\em C.R. Acad. Sci. Paris, Ser. I}, 348 (2010) 1169-1173.
	
	\bibitem{Wehbe-Youssef-2011} A. Wehbe and W. Youssef. Indirect locally internal observability of weakly coupled wave equations,
	{\em Differential Equations and Applications-DEA}, Vol. 3, No. 3, (2011), 449-462.	
\end{thebibliography}
\end{document}